\title{A quasi-isometry invariant and thickness bounds for right-angled Coxeter groups}
\author{Ivan Levcovitz}
\date{}
\newtheorem{theorem}{Theorem}[section]
\newtheorem{lemma}[theorem]{Lemma}
\newtheorem{corollary}{Corollary}[theorem]
\newtheorem{conjecture}[theorem]{Conjecture}
\newtheorem{introtheorem}{Theorem}
\theoremstyle{definition}
\newtheorem{definition}[theorem]{Definition}
\theoremstyle{remark}
\newtheorem{remark}{Remark}[theorem]
\begin{document}
\maketitle

\begin{abstract}
We introduce a new quasi-isometry invariant of $2$--dimensional right-angled Coxeter groups, the hypergraph index, that partitions these groups into infinitely many quasi-isometry classes, each containing infinitely many groups. Furthermore, the hypergraph index of any right-angled Coxeter group can be directly computed from the group's defining graph. The hypergraph index yields an upper bound for a right-angled Coxeter group's order of thickness, order of algebraic thickness and divergence function. Finally, given an integer $n>1$, we give examples of right-angled Coxeter groups which are thick of order $n$, yet are algebraically thick of order strictly larger than $n$, answering a question of Behrstock-Dru\c{t}u-Mosher.
\end{abstract}

\section{Introduction}

Recently there has been considerable interest in the quasi-isometric classification of right-angled Coxeter groups \cite{ChSu} \cite{BHS_qflats} \cite{BHSC} \cite{BFHS} \cite{DST} \cite{DT} \cite{DT2} \cite{Lev}. As every right-angled Artin group is finite index in some right-angled Coxeter group \cite{DJ}, results on the quasi-isometric classification of right-angled Artin groups are steps in the classification of right-angled Coxeter groups as well \cite{BC} \cite{BKS} \cite{BN} \cite{BN2} \cite{Huang} \cite{Huang2} \cite{BH}.

The right-angled Coxeter group, $W_{\Gamma}$, has a presentation consisting of an order $2$ generator for each vertex of a simplicial graph $\Gamma$ with the relation that two generators commute if there is an edge between the corresponding vertices of $\Gamma$. We introduce the hypergraph index, which takes the value of a non-negative integer or infinity, of a right-angled Coxeter group. The group $W_{\Gamma}$ is called $2$--dimensional if $\Gamma$ does not contain a $3$--cycle. We show the hypergraph index gives a decomposition of $2$--dimensional right-angled Coxeter groups into distinct quasi-isometry classes: 

\begin{introtheorem} \label{intro_thm_qi}
	The hypergraph index is a quasi-isometry invariant of $2$--dimensional right-angled Coxeter groups.
\end{introtheorem}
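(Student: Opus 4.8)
The plan is to establish that the hypergraph index is a quasi-isometry invariant by exhibiting it as a function of a known quasi-isometry invariant, and the natural candidate is the structure of the Bowditch-style JSJ decomposition (or more precisely, the coarse structure of maximal product regions and their intersection pattern) of $W_\Gamma$. Concretely, I would first recall that for $2$--dimensional right-angled Coxeter groups the divergence and thickness behaviour is governed by the combinatorics of how the cut-pairs and virtual $\mathbb{Z}^2$ subgroups sit inside the group: Dani-Thomas and related work show that the CFS condition, the presence of separating closed curves, etc., are detectable from the graph $\Gamma$, and also coarsely intrinsic. So the first step is to give an \emph{intrinsic}, quasi-isometry-invariant reformulation of the hypergraph index: I would define a sequence of ``levels'' where level $0$ consists of the maximal product (flat) regions in the Davis complex, and at each subsequent level one groups together previously-built pieces that are coarsely connected through such product regions, terminating when no further merging occurs. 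This is modelled on the hierarchy of hierarchically hyperbolic / thick structure, and each stage should be phrased purely in terms of the metric space (coarse intersection of standard flats, coarse connectivity), so that a quasi-isometry $\phi\colon W_\Gamma \to W_{\Gamma'}$ carries the level-$k$ structure of one to a bounded-distance-equivalent level-$k$ structure of the other.

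Second, I would prove the bridging lemma that the combinatorially-defined hypergraph index (read off from $\Gamma$) agrees with this intrinsic version. This requires two inclusions: every hyperedge/piece appearing in the graph-theoretic construction is realized by an actual product region or coarsely-connected union of product regions in the Davis complex (the ``easy'' direction, using that a subgraph inducing a product or a chain of products gives a standard flat or a quasiconvex family of them), and conversely every maximal product region in the Davis complex is, up to finite Hausdorff distance, a translate of a standard flat coming from a join subgraph of $\Gamma$ — this is where $2$--dimensionality is essential, since it forces product regions to be (virtually) $\mathbb{Z}^2$ and pins down their stabilizers as standard parabolics, via the usual parabolic-subgroup rigidity for Coxeter groups. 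Putting these together shows the intrinsic index equals the graph index, so the latter is a quasi-isometry invariant.

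The main obstacle I anticipate is the inductive step of the intrinsic characterization: showing that ``coarsely connected through product regions'' is genuinely preserved under quasi-isometry at every level, not just the first. The difficulty is that higher-level pieces are unions of lower-level pieces glued along flats, and a quasi-isometry only controls things coarsely, so one must verify that the merging relation is stable under taking images with additive and multiplicative error — the errors could in principle accumulate across levels. I would control this by a uniformity argument: bound the number of levels by a function of $|\Gamma|$ (so the induction is finite and the constants stay bounded), and at each level use the fact that the relevant product regions are uniformly quasiconvex with uniform coarse-intersection constants, a consequence of the acylindricity/thickness structure already available. A secondary, more technical point is handling the case where the hypergraph index is infinite (the non-thick, relatively hyperbolic or hyperbolic cases): there one should argue separately that the index being $\infty$ corresponds to the failure of thickness, which is itself a quasi-isometry invariant by Behrstock-Druţu-Mosher, so the invariance statement holds vacuously on that part of the partition.

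Finally, with invariance of each finite level and of the value $\infty$ in hand, I would assemble the theorem: given a quasi-isometry between two $2$--dimensional right-angled Coxeter groups, the intrinsic levels match up, hence by the bridging lemma the graph-computed hypergraph indices coincide. I expect the write-up to lean heavily on results recalled in later sections of the paper (the precise definition of the hypergraph index from $\Gamma$, the relationship with thickness, and the structure theory of product regions), so in the actual proof I would cite those and concentrate the new work on the coarse-geometric stability of the level structure.
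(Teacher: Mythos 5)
Your overall architecture matches the paper's: an intrinsic, coarsely-defined level structure built from product regions (the paper calls this the \emph{coarse intersection degree} of the realization $\mathcal{R}_0$ of the level-$0$ hyperedges), a bridging lemma identifying it with the graph-computed index (Lemma \ref{lemma_coarse_intersection_degree}), the use of $2$--dimensionality to show every flat lies in a standard join coset (Lemma \ref{lemma_flat_structures}), and a separate treatment of index $\infty$ via quasi-isometry invariance of relative hyperbolicity. So the route is essentially the paper's.

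However, there is a genuine gap at the crucial step, which you dispose of with ``each stage should be phrased purely in terms of the metric space\dots so that a quasi-isometry carries the level-$k$ structure to a bounded-distance-equivalent level-$k$ structure.'' Being metrically defined does not make a family of subsets quasi-isometry invariant: the image of a flat under a quasi-isometry is only a quasi-flat, and you need a rigidity theorem to replace it by an honest flat at bounded Hausdorff distance. The paper's proof hinges on exactly such a result, Huang's theorem (\cite[Corollary 5.18]{Huang}, quoted as Theorem \ref{thm_huang}), which you never identify; without it the transfer of the level-$0$ structure does not get off the ground. Two further points where the paper diverges usefully from your plan: (i) the quasi-isometry does \emph{not} induce a well-defined map on level-$0$ pieces --- a single coset in $\mathcal{R}_0$ can spread over several cosets of $\mathcal{R}_0'$, and the paper only obtains a well-defined bijection one level up, $\phi_1:\mathcal{R}_1\to\mathcal{R}_1'$ (Lemmas \ref{lemma_lambda_1_images}--\ref{lemma_phi_1_bijection}); and (ii) the error-accumulation-across-levels problem you flag is real for your level-by-level induction, but the paper sidesteps it entirely: once $\mathcal{R}_1$ is matched to $\mathcal{R}_1'$ up to uniform Hausdorff distance, the coarse intersection degree of a collection of subspaces is manifestly preserved under quasi-isometry and bounded thickening (Remark \ref{rmk_coarse_intersection_qi}), so all higher levels come for free. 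You should restructure your argument around these two devices rather than attempting to propagate the matching level by level.
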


A right-angled Coxeter group has infinite hypergraph index if and only if it is relatively hyperbolic. However, in this case the spectrum of hypergraph indexes of the maximal peripheral subgroups provides a more refined quasi-isometry invariant (see Corollary \ref{cor_hypergraph_index_spectrum}).

\begin{figure}[htp]
	\centering
	\begin{overpic}[scale=.6]{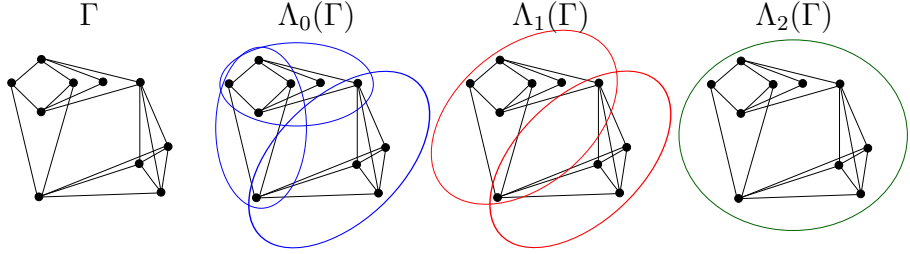}
		\put(8,25){$\Gamma$}		
		\put(30,25){$\Lambda_0(\Gamma)$}
		\put(56,25){$\Lambda_1(\Gamma)$}
		\put(83,25){$\Lambda_2(\Gamma)$}
		
	\end{overpic}
	
	\caption{The hypergraphs $\{\Lambda_i(\Gamma)\}$ associated to the graph $\Gamma$. The hypergraph $\Lambda_0(\Gamma)$ has two hyperedges corresponding to wide subgraphs and several strip subgraph hyperedges  (one is shown). As a hyperedge of $\Lambda_2(\Gamma)$ contains every vertex, the right-angled Coxeter group $W_{\Gamma}$ has hypergraph index $2$. For the relevant definitions, see definition \ref{def_lambda_hypergraphs} and \ref{def_hypergraph_index}.} \label{fig_hypergraph_example}
\end{figure}

The hypergraph index of a right-angled Coxeter group is obtained through an easily computable graph-theoretic construction on the group's defining graph, $\Gamma$, that outputs a sequence of hypergraphs, $\{\Lambda_i\}$, each having the same vertex set as $\Gamma$. The hyperedges of the first hypergraph, $\Lambda_0$, are subgraphs of $\Gamma$ which are certain types of graph joins. The hyperedges of $\Lambda_{i+1}$ are certain unions of hyperedges of $\Lambda_i$. The hypergraph index of $W_{\Gamma}$ is the smallest integer $h$ such that some hyperedge of $\Lambda_h$ contains every vertex (see Figure \ref{fig_hypergraph_example} for an example). If no such $h$ exists, then we set $h = \infty$. The computation time of the hypergraph index is always bounded by the number of vertices of $\Gamma$, even when the hypergraph index happens to be infinite.
	
Other invariants have been used to study the quasi-isometric classification of right-angled Coxeter groups. The Bowditch boundary has been recently used to understand certain classes of $2$--dimensional hyperbolic right-angled Coxeter groups \cite{DT2} \cite{DST}. The authors of \cite{ChSu} also use their notion of a contracting boundary to differentiate between certain relatively hyperbolic right-angled Coxeter groups.
	
When the tools of relative hyperbolicity are not available, there are still a few quasi-isometry invariants to distinguish between these groups. Two such invariants are the order of thickness of a group, and the group's divergence function, a measure of how quickly geodesic rays in the group's Cayley graph can stray apart.
	
The authors of \cite{BHSC} show every right-angled Coxeter group that is not relatively hyperbolic must be thick of some order. On the other hand, the authors of \cite{DT} and \cite{Lev}, provide criteria to determine the divergence of certain right-angled Coxeter groups. However, despite a few exceptions, the exact order of thickness and the divergence function of most right-angled Coxeter groups is unknown. An advantage of the hypergraph index over these invariants is that it is always computable. Furthermore, we show the hypergraph index is strongly related with the order of thickness and the divergence of these groups: 
	
\begin{introtheorem} \label{intro_thm_bounds}
	Suppose the right-angled Coxeter group, $W_{\Gamma}$, has hypergraph index $h \neq \infty$, then $W_{\Gamma}$ is thick of order at most $h$ and the divergence of $W_{\Gamma}$ is bounded above by a polynomial of degree $h+1$.
\end{introtheorem}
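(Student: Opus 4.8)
The approach is an induction on the level $i$, establishing the following strengthening of the claim: \emph{for every hyperedge $e$ of $\Lambda_i$, the special subgroup $W_e$ is strongly thick of order at most $i$ in the sense of Behrstock and Dru\c{t}u, with constants depending only on $\Gamma$.} Granting this, Theorem~\ref{intro_thm_bounds} follows by taking $i=h$ and $e=V(\Gamma)$: then $W_\Gamma$ is (strongly) thick of order at most $h$, and the theorem of Behrstock and Dru\c{t}u that a strongly thick group of order $n$ has divergence bounded above by a polynomial of degree $n+1$ yields the divergence statement (alternatively the same bound is read off directly from the detours produced in the inductive step). All computations take place in the Cayley graph of $W_e$, and two standard facts about Coxeter groups are used throughout: each special subgroup $W_\Lambda$ is convex, hence uniformly quasi-isometrically embedded, and $W_\Lambda\cap W_{\Lambda'}=W_{\Lambda\cap\Lambda'}$, so that a coset $gW_\Lambda$ with $g\in W_{\Lambda'}$ meets $W_{\Lambda'}$ in $gW_{\Lambda\cap\Lambda'}$, which has infinite diameter whenever $W_{\Lambda\cap\Lambda'}$ is infinite. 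Since $\Gamma$ is finite, only finitely many hyperedges occur, so ``uniform'' is automatic.

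\textbf{Base case $i=0$.} The hyperedges of $\Lambda_0$ are the wide subgraphs and the strip subgraphs of Definition~\ref{def_lambda_hypergraphs}. If $e$ is a wide subgraph, then $e$ is a graph join $\Lambda_1\ast\Lambda_2$ with $W_{\Lambda_1},W_{\Lambda_2}$ infinite, so $W_e\cong W_{\Lambda_1}\times W_{\Lambda_2}$ is a direct product of two infinite groups; such a group is wide with linear divergence, i.e.\ strongly thick of order $0$. If $e$ is a strip subgraph, the explicit description gives the same conclusion. This settles the base case.

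\textbf{Inductive step.} A hyperedge $e$ of $\Lambda_{i+1}$ is a union $e=e_1\cup\dots\cup e_m$ of hyperedges of $\Lambda_i$ whose overlap graph---with vertices the $e_j$, and $e_j$ adjacent to $e_k$ exactly when $W_{e_j\cap e_k}$ is infinite---is connected. For $W_e$ I take the network $\mathcal{L}=\{\,gW_{e_j}: g\in W_e,\ 1\le j\le m\,\}$. Each member of $\mathcal{L}$ is an isometric copy of one of the finitely many groups $W_{e_j}$, hence strongly thick of order at most $i$ with uniform constants by the inductive hypothesis, and is uniformly quasiconvex in $W_e$ by convexity of special subgroups; and $\mathcal{L}$ covers $W_e$ outright, since any $w\in W_e$ lies in the member $wW_{e_j}$. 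The crux is the thick-chain condition together with a linear bound on chain length. Given $gW_{e_j},g'W_{e_k}\in\mathcal{L}$, write $g^{-1}g'=s_1\cdots s_\ell$ as a word in the vertices of $e$ with $\ell$ comparable to $d_{W_e}(g,g')$, choose for each $s_t$ a hyperedge $e_{j(t)}$ of $\Lambda_i$ containing $s_t$, and consider the chain $gW_{e_j},\,gW_{e_{j(1)}},\,gs_1W_{e_{j(2)}},\,\dots,\,gs_1\cdots s_{\ell-1}W_{e_{j(\ell)}},\,gs_1\cdots s_\ell W_{e_k}=g'W_{e_k}$; between two consecutive cosets $wW_f,wW_{f'}$ sharing a representative but with $f\neq f'$ distinct $\Lambda_i$-hyperedges inside $e$, splice in the subchain $wW_{f_0},\dots,wW_{f_r}$ coming from a path $f=f_0,\dots,f_r=f'$ in the connected overlap graph of $e$. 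After absorbing each letter $s_t$ into the coset representative---legitimate since $s_t\in W_{e_{j(t)}}$---consecutive members of the final chain share a coset of some $W_{f_a\cap f_{a+1}}$, which is infinite by construction of the overlap graph; and the chain has length at most $\ell$ times the diameter of the overlap graph, hence $\preceq d_{W_e}(g,g')$. The main obstacle is the quantitative bookkeeping this requires---tracking quasiconvexity constants, uniform lower bounds on the diameters of the coarse intersections, and the linear control on chain length---so as to match the definition of strong thickness; the degree-$(i{+}1)$ polynomial detour estimate, and hence the degree-$(h{+}1)$ divergence bound of Theorem~\ref{intro_thm_bounds}, then follows from the same chains.
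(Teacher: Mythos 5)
Your overall strategy is the same as the paper's: an inductive decomposition of $W_\Gamma$ into a network of cosets of special subgroups indexed by the hyperedges of the $\Lambda_i$, with chains built by walking along a geodesic and splicing in paths from the ``overlap graph'' of members (this is exactly what Lemma \ref{lemma_coarse_intersection_degree} does via the sequences $S_t$), followed by the Behrstock--Dru\c{t}u divergence bound \cite[Corollary 4.17]{BD}. However, there is a genuine gap in your treatment of strip subgraphs, and it breaks the induction at its base. For a strip subgraph $e = A \star K$ with $K$ a non-empty clique, the group $W_e \cong (\mathbb{Z}_2 * \mathbb{Z}_2) \times W_K$ is $D_\infty$ times a finite group, i.e.\ virtually cyclic and quasi-isometric to a line; every asymptotic cone is $\mathbb{R}$, which has cut points, so $W_e$ is \emph{not} wide and not thick of order $0$. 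Your base case asserts the opposite, and your network $\mathcal{L}$ for a hyperedge $e$ of $\Lambda_{i+1}$ includes cosets of all members, strip subgraphs among them, so condition (2) of the thickness definition fails already for $i+1=1$.

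This is precisely why the paper defines the realization $\mathcal{R}_i$ to \emph{exclude} hyperedges coming from strip subgraphs, even though strip subgraphs are used to define the equivalence $\equiv_i$. The repair requires an actual argument, supplied by Remark \ref{rmk_strip_subgroup_intersection} and the second-criterion step of Lemma \ref{lemma_coarse_intersection_degree}: in a chain $G_0,\dots,G_l$ of $\Lambda_i$-hyperedges with consecutive intersections containing a non-adjacent pair, no two consecutive entries are strip subgraphs, and if $G_j = A \star K$ is a strip subgraph then the only non-adjacent pair it contains is $A$ itself, so $A \subset G_{j-1} \cap G_{j+1}$; hence deleting the strip entries leaves consecutive survivors whose cosets still coarsely intersect in infinite-diameter, path-connected sets, while each deleted strip coset lies in the $M$-neighborhood of its neighbors ($M$ one more than the maximal clique size). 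You need to incorporate this deletion into both your network (keep only non-strip members) and your spliced chains. With that modification, and the quantitative bookkeeping you already flag, your argument matches the paper's proof of Theorem \ref{thm_thick_upper_bound}.
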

	
The hypergraph index also provides an upper bound on the order of \textit{algebraic} thickness of a right-angled Coxeter group (see Theorem \ref{thm_alg_thick_upper_bound}). 

For both $n=0$ and $n=1$, in the class of right-angled Coxeter groups, thickness of order $n$, algebraic thickness of order $n$, polynomial divergence of degree $n+1$ and hypergraph index $n$ are all equivalent notions (see section \ref{sec_thick_bounds} for an overview). Actually, the following conjecture seems to hold for all groups whose divergence and thickness we can compute:

\begin{conjecture}
	Let $\Gamma$ be a simplicial graph and $W_{\Gamma}$ the corresponding right-angled Coxeter group. The following are equivalent: 
	\begin{enumerate}
		\item $\Gamma$ has hypergraph index $n$. 
		\item $W_{\Gamma}$ is thick of order $n$.
		\item The divergence of $W_{\Gamma}$ is a polynomial of degree $n+1$. 
	\end{enumerate}
\end{conjecture}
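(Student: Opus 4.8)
A large part of the conjecture is already available. Theorem~\ref{intro_thm_bounds} shows that a finite hypergraph index $h$ forces $W_\Gamma$ to be thick of order at most $h$ and to have divergence bounded above by a polynomial of degree $h+1$; and a theorem of Behrstock--Dru\c{t}u shows that a group thick of order $k$ has divergence bounded above by a polynomial of degree $k+1$. Granting these, the whole conjecture reduces to the matching lower bound:

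\emph{$(\star)$: if $W_\Gamma$ has finite hypergraph index $h$, then the divergence of $W_\Gamma$ is bounded below by a polynomial of degree $h+1$.}

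Indeed, once $(\star)$ is known, a finite index $h$ forces the divergence to have degree exactly $h+1$ (upper bound from Theorem~\ref{intro_thm_bounds}, lower bound from $(\star)$), which is $(1)\Rightarrow(3)$; and $W_\Gamma$ cannot be thick of any order $k<h$, since then the Behrstock--Dru\c{t}u bound would cap its divergence at degree $k+1<h+1$, contradicting $(\star)$, so, with Theorem~\ref{intro_thm_bounds}, its order of thickness is exactly $h$, which is $(1)\Rightarrow(2)$. The reverse implications follow by the same comparison of degrees, together with the facts that infinite hypergraph index is equivalent to relative hyperbolicity and that a relatively hyperbolic right-angled Coxeter group is neither thick of finite order nor of polynomial divergence. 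The cases $h=0$ (wide, linear divergence) and $h=1$ (quadratic divergence) of $(\star)$ are already known, so the real content of the conjecture is $(\star)$.

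To prove $(\star)$ I would induct on $h$. By definition there is a hyperedge $E$ of $\Lambda_h$ spanning all of $\Gamma$, and $E$ is a union $E_1\cup\cdots\cup E_m$ of hyperedges of $\Lambda_{h-1}$ glued along the prescribed overlaps; by induction each special subgroup $W_{E_i}$ has divergence at least a polynomial of degree $h$. The plan is to pass to a \emph{minimal} such witnessing family, extract a ``combinatorial geodesic'' through the pieces $E_1,\dots,E_m$, and build along it a pair of points $x,y$ with $d(x,y)\asymp r$ such that every path from $x$ to $y$ avoiding the ball $B(e,r/4)$ must traverse roughly $r$ of the subgroups $W_{E_i}$ and, inside each, pay that piece's degree-$h$ detour cost; summing the costs yields the degree-$(h+1)$ bound. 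This is the recursive ``staircase'' mechanism used by Macura and by Dani--Thomas in the low-order cases. The geometric inputs I would rely on are that special subgroups of $W_\Gamma$ are isometrically embedded and convex --- so detours between two points of $W_{E_i}$ can be analysed inside $W_{E_i}$, and there is a $1$--Lipschitz retraction of $W_\Gamma$ onto $W_{E_i}$ --- and that walls separate the Davis complex, which is what certifies that the ball $B(e,r/4)$ genuinely blocks all short detours. To make the induction close I expect one must strengthen the statement to a geometric one: that $W_\Gamma$ carries a family of bi-infinite geodesics, compatible with the hypergraph structure, realising the lower bound \emph{coherently}, so that the geodesics chosen in adjacent pieces $E_i$ and $E_{i+1}$ can be concatenated along their common sub-structure.

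The principal obstacle, I expect, is the flexibility of the hyperedge-union operation: the pieces $E_i$ need not glue to form $E$ along a simple chain, and one must show that no more intricate gluing pattern creates shortcuts that drop the divergence below degree $h+1$ --- that is, that the hypergraph-index construction is \emph{efficient}, with no cleverer thickness decomposition of lower order. Such efficiency fails for groups in general, and, as the examples of this paper show, can fail even between thickness and \emph{algebraic} thickness, so some genuinely right-angled Coxeter feature --- convexity of special subgroups, the wall structure, or $2$--dimensionality --- must be used. Concretely the difficulty localises in the separation step: one must produce, purely from the combinatorics of $\Gamma$ and the hypergraphs $\Lambda_i$, a wall or family of walls whose carrier contains $B(e,r/4)$ and which every efficient path is forced to cross many times, and one must do so uniformly enough across the iterated, possibly non-linear, gluing patterns to feed the induction. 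A secondary difficulty is securing the coherence of the inductive bi-infinite geodesics, which may require first proving a structural ``normal form'' for minimal witnessing families of $\Lambda_h$. Finally, since Theorem~\ref{intro_thm_qi} only gives quasi-isometry invariance of the hypergraph index in dimension $2$, a proof of the conjecture in full generality would in particular establish that invariance for all right-angled Coxeter groups, which is one reason to expect the general case to be genuinely hard.
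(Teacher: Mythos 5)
The statement you are addressing is stated in the paper as a \emph{conjecture}: the paper offers no proof of it, and only establishes the one-directional upper bounds (Theorem \ref{thm_thick_upper_bound} and its corollary: hypergraph index $h$ implies thick of order at most $h$ and divergence at most polynomial of degree $h+1$) together with the full equivalences in the cases $n=0$ and $n=1$ (Theorems \ref{thm_thick_0_classification} and \ref{thm_thick_1_classification}). Your reduction is sound and correctly isolates what is missing: granting Theorem \ref{intro_thm_bounds} and the Behrstock--Dru\c{t}u bound \cite[Corollary 4.17]{BD}, all three equivalences do follow from the single lower bound $(\star)$, and your handling of the reverse implications via relative hyperbolicity (Remark \ref{rmk_infinite_hyp_index}) is consistent with what the paper proves. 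You have also correctly identified that $(\star)$ is known only for $h=0,1$.

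The gap is that $(\star)$ is not proved, only planned. Your inductive ``staircase'' outline never carries out its key step: you do not produce, from the hyperedge structure of $\Lambda_h$, the separating walls and the coherent family of geodesics that would force a degree-$(h+1)$ detour, and you yourself flag that the non-linear gluing of the pieces $E_i$ and the possibility of a more efficient thickness decomposition are unresolved. These are precisely the points where the difficulty lives --- indeed the paper's Section \ref{sec_thick_not_alg} shows that naive expectations about such decompositions can fail (thickness versus algebraic thickness), so the ``efficiency'' of the hypergraph-index decomposition cannot be taken for granted. Your proposal is therefore best read as a correct framing of the open problem rather than a proof; to turn it into one you would need, at minimum, a lower-bound mechanism generalizing the rank-$n$ pair arguments of \cite{Lev} (used in Lemma \ref{lemma_hyp_isom_divergence} for the special graphs of Theorem \ref{thm_counterexample}) to arbitrary graphs of hypergraph index $h$.
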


One may then ask if algebraic thickness of order $n$ and thickness of order $n$ are equivalent notions in right-angled Coxeter groups, as this is true for $n=0$ and $n=1$. In fact, in the paper where thick groups are originally defined, the authors ask if the order of algebraic thickness of any finitely generated group is equivalent to the group's order of thickness \cite[Question 7.7]{BDM}. Sisto provided a negative answer to this question by demonstrating an example of a group which is thick of order $1$ but is not algebraically thick of order $1$  \cite{BD}. We give a negative answer to this question for the case of higher orders of thickness (see Theorem \ref{thm_counterexample} for a more detailed statement): 
	
\begin{introtheorem} \label{intro_thm_counterexample}
	Given any integer $n>1$, there are right-angled Coxeter groups which are thick of order $n$, but are algebraically thick of order strictly larger than $n$.
\end{introtheorem}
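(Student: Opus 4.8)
The plan is to exhibit, for each integer $n>1$, an explicit $2$--dimensional right-angled Coxeter group $W_{\Gamma_n}$ satisfying three properties: its hypergraph index is $n$; its divergence is a polynomial of degree exactly $n+1$; and every algebraic thick structure on it has order at least $n+1$. The first property, together with Theorem \ref{intro_thm_bounds}, gives that $W_{\Gamma_n}$ is thick of order at most $n$. The second property forces the order of thickness to be at least $n$, since a group thick of order $m$ has divergence bounded above by a polynomial of degree $m+1$ \cite{BDM}; hence $W_{\Gamma_n}$ is thick of order exactly $n$. Combined with the third property this proves the theorem. I would also record that $W_{\Gamma_n}$ \emph{is} algebraically thick of some finite order: it is not relatively hyperbolic, and one can write down a thick structure using only parabolic subgroups, which are automatically undistorted. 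Thus the conclusion is the sharp "strictly larger than $n$", not merely "not algebraically thick of order $n$".

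For the construction I would iterate a ``cycle of pieces'' operation on defining graphs. Begin with a graph $\Theta_1$ --- a suitable union of squares and pentagons in the style of the Dani--Thomas examples of \cite{DT} --- for which $W_{\Theta_1}$ is thick of order $1$ with quadratic divergence, and define $\Theta_{m+1}$ by taking finitely many copies of $\Theta_m$ and identifying, cyclically, a designated subgraph of consecutive copies. The essential feature is the choice of gluing subgraph: rather than gluing along a join (wide) subgraph, we glue along a \emph{strip}-type subgraph $S\subseteq\Theta_m$, chosen so that the Davis complex of $W_{\Theta_m}$ contains a wide \emph{subset} coarsely carried by $S$ while $W_S$ itself is \emph{not} wide (for instance $W_S$ is virtually $D_\infty$ or $D_\infty\times(\text{finite})$). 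Setting $\Gamma_n=\Theta_n$: that the hypergraph index of $W_{\Gamma_n}$ equals $n$ is a direct computation of the hypergraphs $\Lambda_i(\Gamma_n)$ from this recursive description using Definitions \ref{def_lambda_hypergraphs} and \ref{def_hypergraph_index}, each gluing stage propagating the ``span'' one level up in the hypergraph sequence; and the divergence lower bound follows by exhibiting in the Davis complex of $W_{\Theta_m}$ a pair of geodesic rays that can be connected outside a ball of radius $r$ only by paths of length $\gtrsim r^{m+1}$, the cyclic gluing being precisely what raises the exponent by one at each stage, as in the divergence estimates of \cite{Lev}.

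The heart of the argument, and the main obstacle, is the lower bound on \emph{algebraic} thickness: the strip gluings make metric thickness cheaper than algebraic thickness, because a metric thick structure may use the wide \emph{subsets} carried by the strip subgraphs, whereas an algebraic thick structure must use undistorted \emph{subgroups}, and no undistorted wide subgroup sees these subsets. To make this precise I would prove a structural lemma: if $H\leq W_{\Gamma_n}$ is undistorted and algebraically thick of order $0$, hence wide, then using $2$--dimensionality and the $\mathrm{CAT}(0)$ geometry of the Davis complex --- product rigidity for quasi-flats in $2$--dimensional $\mathrm{CAT}(0)$ cube and Davis complexes, together with convex-cocompactness of wide undistorted subgroups --- the subgroup $H$ is coarsely contained in a single coset of $W_\Lambda$ with $\Lambda$ a \emph{join} subgraph of $\Gamma_n$. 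One then runs an induction parallel to the definition of the hypergraph index but with the strip hyperedges of $\Lambda_0$ deleted: an order-$m$ algebraically thick undistorted subgroup is coarsely contained in a coset of $W_\Lambda$ for $\Lambda$ a hyperedge of this ``join-only'' analogue of $\Lambda_m(\Gamma_n)$. Since $\Gamma_n$ was built so that deleting the strip subgraphs strictly delays the stabilization of the hypergraph sequence, no hyperedge of the join-only $\Lambda_n$ contains every vertex, so no algebraic thick structure of order $\leq n$ can coarsely cover $W_{\Gamma_n}$. The two delicate points are: (a) establishing the structural lemma in full generality --- one must rule out exotic wide undistorted subgroups that are not virtually special, which is where I expect to rely on rigidity for quasi-flats in $2$--dimensional $\mathrm{CAT}(0)$ complexes; and (b) carrying the ``coarsely contained in a join coset'' bookkeeping through the higher stages of the algebraic structure, where the subgroups need not be parabolic, so the on-the-nose graph combinatorics of Definition \ref{def_lambda_hypergraphs} must be replaced by a coarse version controlling coarse intersections of the subgroups that appear.
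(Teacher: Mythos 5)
Your high-level skeleton matches the paper's: produce a graph of hypergraph index $n$, use Theorem \ref{intro_thm_bounds} for the upper bound on thickness, force thickness exactly $n$ via a divergence lower bound of degree $n+1$, and then separately obstruct algebraic thickness of order $n$. The construction by gluing along strip subgraphs is also in the same spirit as the paper's examples (Figure \ref{fig_counterexample}, Theorem \ref{thm_counterexample}). But the decisive step --- ruling out an algebraic thick structure of order $n$ --- is exactly where your proposal has a genuine gap, and the route you sketch to fill it is not established. Your ``structural lemma'' (every undistorted wide subgroup of $W_{\Gamma_n}$ is coarsely contained in a single coset of a join special subgroup) is a nontrivial rigidity statement that you do not prove; wideness is an asymptotic-cone condition, and a wide undistorted subgroup need not carry a quasi-flat, so quasi-flat rigidity does not directly apply. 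Worse, your point (b) is the real difficulty: at intermediate levels of an algebraic thick structure the subgroups need not be special subgroups at all, so the ``join-only hypergraph index'' induction has no combinatorial handle on them, and you would need a coarse-intersection analysis for arbitrary undistorted subgroups, which is precisely what you have not supplied. As written, the heart of the theorem is deferred to two admittedly ``delicate points'' that constitute essentially all of the work.

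The paper's argument for the obstruction is both different and much more elementary, and you should compare it to see what it buys. The two added vertices $u,v$ with $Link(u)$, $Link(v)$ equal to two non-adjacent vertices of $B$ give an amalgamated product decomposition $W_{\Gamma} = W_{Star(u)} *_{Link(u)} W_{B} *_{Link(v)} W_{Star(v)}$ and hence an action on a Bass--Serre tree. Lemma \ref{lemma_hyp_isom_divergence} shows the axis of any hyperbolic isometry has divergence of degree $n+1$ (using the rank $n$ pair hypothesis and \cite{Lev}), so any undistorted subgroup that is thick of order at most $n-1$ --- whose divergence is at most degree $n$ by \cite[Corollary 4.17]{BD} --- must act elliptically and, not being virtually cyclic, is conjugate into $W_B$ (Lemma \ref{lemma_thick_n_conj}). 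Finally the retraction $W_{\Gamma}\to \mathbb{Z}_2 * \mathbb{Z}_2$ killing all generators except $u,v$ shows that finitely many subgroups of conjugates of $W_B$ generate an infinite-index subgroup (Lemma \ref{lemma_infinite_index}), so condition (b) of a tight algebraic network fails. This completely avoids any classification of wide or thick undistorted subgroups: divergence alone pins them into $W_B$. I recommend you replace your structural-lemma program with an argument of this type; without it, your proof is incomplete.
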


The paper is organized as follows. Section \ref{sec_background} reviews some of the necessary background. We define the lambda hypergraphs and the hypergraph index associated to a right-angled Coxeter group in section \ref{sec_lambda_graphs} where we also prove some essential results regarding these constructions. We define the notion of coarse intersection degree, an important notion for many of the results in this paper in section \ref{sec_coarse_intersection}. In section \ref{sec_qi} we prove Theorem \ref{intro_thm_qi}. In section \ref{sec_thick_bounds}, we review how the hypergraph index relates to some known classes of right-angled Coxeter groups, and we prove Theorem \ref{intro_thm_bounds}. Finally, section \ref{sec_thick_not_alg} is devoted to proving Theorem \ref{intro_thm_counterexample}.

\vspace{.4cm}

\noindent \textbf{Acknowledgements:} 
I would like to thank my advisor, Jason Behrstock, for his extremely helpful guidance and excellent suggestions. I also thank Ruth Charney, Mike Davis, Pallavi Dani, Mark Hagen, Jean-Fran\c{c}ois Lafont, Emily Stark, Tim Susse, Anne Thomas and Hung Cong Tran for helpful discussions regarding the quasi-isometric classification of right-angled Coxeter groups. I am also grateful to the anonymous referee for the many helpful suggestions and corrections.

\section{Background} \label{sec_background}
Throughout the text, given a metric space $X$ and $Z \subset X$ a subspace, we denote the $C$--neighborhood of $Z$ by $N_C(Z)$.

Let $X$ and $Y$ be metric spaces. A \textit{($K$, $C$)--quasi-isometry} is a (not necessarily continuous) function $f: X \to Y$, such that for all $a, b \in X$ we have:
\[ \frac{1}{K}d_X(a,b) - C \le d_Y(f(a), f(b)) \le Kd_X(a,b) + C \]
Furthermore, we require $f$ to be coarsely surjective so that  $Y = N_C(f(X))$. Quasi-isometries provide a natural notion of equivalence in a coarse geometric setting. For a detailed background on quasi-isometries and geometric group theory in general, see \cite{BH}.

\subsection{Graphs and hypergraphs}
 We summarize the common graph constructions we use throughout the paper. $\Gamma$ will always denote a simplicial graph. $V(\Gamma)$ and $E(\Gamma)$ are respectively the vertex set and edge set of $\Gamma$.

A \textit{clique} is a graph with the property that any two vertices are adjacent.  A $k$--clique is a clique with $k$ vertices. $\Gamma$ is a \textit{join} if there are subgraphs $\Gamma_1$, $\Gamma_2 \subset \Gamma$ such that $V(\Gamma) = V(\Gamma_1) \cup V(\Gamma_2)$, and every vertex in $\Gamma_1$ is adjacent to every vertex in $\Gamma_2$. Graph joins are written as: $\Gamma = \Gamma_1 \star \Gamma_2$.

Given a graph $\Gamma$ and a subset of its vertices, $T \subset V(\Gamma)$, the subgraph \textit{induced} by $T$ is the graph with vertex set $T$ and with the property that two vertices are adjacent if and only if they are adjacent in $\Gamma$.

Given a vertex $v \in \Gamma$, the \textit{link of $v$} is the set, $Link(v) = \{s \in V(\Gamma) | (v,s) \in E(\Gamma) \}$. The \textit{star of $v$} is the set, $Star(v) = Link(v) \cup v$.

A \textit{hypergraph}, $\Lambda$, consists of a set of vertices, $V(\Lambda)$, and a set of hyperedges, $\mathcal{E}(\Lambda)$. A hyperedge, $E \in \mathcal{E}(H)$, is a subset of $V(H)$ consisting of any number of vertices. Note that a graph is a hypergraph whose hyperedges each contain two vertices.

\subsection{Right-angled Coxeter groups}

Given a simplicial graph $\Gamma$ with vertex set $S = \{s_1, s_2, ..., s_n\}$ and edge set $E$, the corresponding right-angled Coxeter group is given by the presentation:
	\[W_{\Gamma} = \langle S ~| ~ s_i^2 = 1, s_is_j = s_js_i \text{ for } (s_i, s_j) \in E  \rangle \]
	
We refer the reader to \cite{BB} and \cite{Dav} for a nice background on Coxeter groups. All results from this subsection are proved in these references.

\begin{definition} [Special subgroup] \label{induced_subgroup_def}
	Let $W_{\Gamma}$ be a right-angled Coxeter group with generating set $S = V(\Gamma)$. For $T \subset S$, let $W_T$ be the subgroup of $W_{\Gamma}$ generated by the induced subgraph of $T$. $W_T$ is called a \textit{special subgroup}.
\end{definition}

The notation in the above definition is justified by the following result:

\begin{lemma} \label{lemma_special_subgroups}
	Let $W_{\Gamma}$ be a right-angled Coxeter group with generating set $S = V(\Gamma)$. Given $T \subset S$, let $\Delta \subset \Gamma$ be the subgraph induced by $T$. $W_{T}$ is isomorphic to $W_{\Delta}$. Furthermore, $W_{T}$ is convex with respect to the word metric of $W_{\Gamma}$.
\end{lemma}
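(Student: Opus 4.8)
The plan is to prove both assertions simultaneously by constructing a retraction homomorphism. Define $\pi \colon W_\Gamma \to W_\Delta$ on generators by $\pi(s) = s$ if $s \in T$ and $\pi(s) = 1$ if $s \in S \setminus T$. First I would check $\pi$ is a well-defined homomorphism: the relations $s^2 = 1$ are visibly preserved, and for an edge $(s_i,s_j) \in E(\Gamma)$ the relation $s_is_j = s_js_i$ maps either to a defining relation of $W_\Delta$ (when $s_i, s_j \in T$, in which case $(s_i,s_j)$ is an edge of $\Delta$ precisely because $\Delta$ is the \emph{induced} subgraph) or to a trivially true identity (when at least one of $s_i,s_j$ lies outside $T$). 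Let $\iota \colon W_\Delta \to W_\Gamma$ be the homomorphism sending each generator to the corresponding element of $S$, so $\operatorname{Image}(\iota) = W_T$ by definition. Since $\pi \circ \iota = \mathrm{id}_{W_\Delta}$, the map $\iota$ is injective, hence restricts to an isomorphism $W_\Delta \xrightarrow{\sim} W_T$ that is the identity on generating sets.

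For convexity I would exploit the fact that $\pi$ can only shorten words. Fix $w \in W_T$ and a geodesic edge-path $p$ from $1$ to $w$ in $\mathrm{Cay}(W_\Gamma, S)$, spelled by a word $s_{i_1}\cdots s_{i_k}$ with $k = \ell_S(w)$. Applying $\pi$ deletes exactly the letters lying in $S \setminus T$, producing a word of length $k - d$ (where $d$ is the number of deletions) representing $\pi(w)$ in $W_\Delta$. Identifying $W_\Delta$ with $W_T$ via the isomorphism above (an isometry for the generating sets $V(\Delta)$ and $T$, since it is the identity on generators) and using $\pi(w) = w$ because $w \in W_T$, we get $\ell_T(w) \le k - d$. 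Conversely every word over $T$ is a word over $S$, so $\ell_S(w) \le \ell_T(w)$, i.e.\ $k \le \ell_T(w)$. Chaining gives $k \le \ell_T(w) \le k - d \le k$, forcing $d = 0$: every letter $s_{i_j}$ lies in $T$, so every vertex of $p$ is a product of elements of $T$ and hence lies in $W_T$. Thus $p \subset W_T$. Finally, left translation by any element of $W_T$ is an isometry of $\mathrm{Cay}(W_\Gamma, S)$ preserving $W_T$ setwise, so a geodesic between arbitrary $g, h \in W_T$ is the $g$-translate of a geodesic from $1$ to $g^{-1}h \in W_T$, hence lies in $W_T$; this is the definition of convexity.

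The only genuinely delicate point is checking that $\pi$ descends to $W_\Gamma$, where it is essential that $\Delta$ is the subgraph \emph{induced} by $T$ so that no edge relation among letters of $T$ is lost; everything else is bookkeeping with word lengths. Alternatively, the isomorphism and the equality $\ell_S(w) = \ell_T(w)$ could be read off directly from Tits' solution to the word problem for Coxeter groups (see \cite{Dav}), but the retraction argument is self-contained and delivers convexity with no additional effort.
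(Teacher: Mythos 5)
Your proof is correct. The paper does not actually prove this lemma---it defers to the references \cite{BB} and \cite{Dav}---and your retraction argument is precisely the standard self-contained proof found there: the homomorphism $\pi$ killing the generators outside $T$ gives the isomorphism $W_T \cong W_\Delta$, and the word-length comparison $k \le \ell_T(w) \le k-d$ forces every geodesic from $1$ to $w \in W_T$ to be spelled entirely in $T$, with left-translation handling arbitrary pairs of endpoints. No gaps.
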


It is readily checked that $W_{\Gamma}$, is finite if and only if $\Gamma$ is a clique. Furthermore, given induced subgraphs $\Gamma_1$, $\Gamma_2 \subset \Gamma$, $W_{\Gamma} = W_{\Gamma_1} \times W_{\Gamma_2}$ if and only if $\Gamma = \Gamma_1 \star \Gamma_2$   

\subsection{The Davis complex, $\Sigma_{\Gamma}$, of a right-angled Coxeter group}

Given a right-angled Coxeter group, $W_{\Gamma}$, we describe its \textit{Davis complex}, $\Sigma_{\Gamma}$, a natural CAT(0) cube complex on which $W_{\Gamma}$ acts geometrically. We again refer the reader to $\cite{{Dav}}$ for a detailed background on the Davis complex. 

The $1$--skeleton of $\Sigma_{\Gamma}$ is the Cayley graph of $W_{\Gamma}$ where edges are given unit length. For every $k$--clique, $T \subset \Gamma$, the subgroup $W_T$ is isomorphic to the product of $k$ copies of $\mathbb{Z}_2$. It follows that the Cayley graph of $W_T$ is isometric to a unit $k$--cube. For each coset, $gW_T$, where $T$ is a $k$--clique, we glue a unit $k$--cube, $[ -\frac{1}{2}, \frac{1}{2}]^k$, to $gW_T \subset \Sigma_{\Gamma}$. 

Much like the Cayley graph of $W_{\Gamma}$, we will assume that $1$-cells of $\Sigma_{\Gamma}$ are labeled by letters of $\Gamma$ corresponding to the associated generator. Furthermore, vertices of $\Sigma_{\Gamma}$ are labeled by group elements of $W_{\Gamma}$.

The $1$--skeleton of $\Sigma_{\Gamma}$ inherits the word metric of the Cayley graph of $W_{\Gamma}$. This is known as the combinatorial metric, and it is quasi-isometric to the CAT(0) metric induced by the Euclidean cubes (see \cite{CS} for instance). When we refer to a geodesic in $\Sigma_{\Gamma}$, we always mean this to be a geodesic in the $1$--skeleton with respect to the combinatorial metric.

\subsection{Hyperplanes in $\Sigma_{\Gamma}$}

The following discussion of hyperplanes and cube complexes holds in the much more general setting of CAT(0) cube complexes. We refer the reader to \cite{Wise} for a general reference. For simplicity, we state the relevant definitions and facts in terms of the Davis complex $\Sigma_{\Gamma}$.

A \textit{midcube} $Y \subset C$ is the restriction of a coordinate of a give cube, $C = [ -\frac{1}{2}, \frac{1}{2}]^k$, of $\Sigma_{\Gamma}$ to $0$. A \textit{hyperplane} $\mathcal{H} \subset \Sigma_{\Gamma}$ is a subspace of $\Sigma_{\Gamma}$ with the property that for each cube, $C$, in $\Sigma_{\Gamma}$, $\mathcal{H} \cap C$ is a midcube or $\mathcal{H} \cap C = \emptyset$. $\Sigma_{\Gamma} - \mathcal{H}$ consists of exactly two distinct components. A $1$--cell, $e$, is dual to a hyperplane $\mathcal{H}$ if $e \cap \mathcal{H} \neq \emptyset$. The carrier of a hyperplane, $N(\mathcal{H})$, is the set of all cubes in $\Sigma_{\Gamma}$ which have non-trivial intersection with $\mathcal{H}$. 

Given a hyperplane, $\mathcal{H}$, in $\Sigma_{\Gamma}$, it is readily checked that $1$-cells dual to $\mathcal{H}$ are labeled a common letter $t \in \Gamma$. Accordingly, we say $\mathcal{H}$ is of type $t$. Furthermore, $N(\mathcal{H})$ is isometric to $\Sigma_t \times \Sigma_{Link(t)}$, where $\Sigma_t$ is a $1$-cell labelled by the generator $t$ and $\Sigma_{Link(t)}$ is the Davis complex corresponding to $W_{Link(t)}$. Let $\mathcal{H}$ and $\mathcal{H}'$ be hyperplanes of types respectively $s$ and $s'$. It follows that if $\mathcal{H}$ intersects $\mathcal{H}'$, then $s$ is adjacent to $s'$ in $\Gamma$.

\subsection{Disk diagrams in $\Sigma_{\Gamma}$}

A \textit{disk diagram}, $D$, is a contractible finite $2$--dimensional cube complex with a fixed planar embedding $P: D \to \mathbb{R}^2$. By compactifying $\mathbb{R}^2$, $S^2 = \mathbb{R}^2 \cup \infty$, we can extend $P$ to the map $P: D \to S^2$, giving a cellulation of $S^2$. The \textit{boundary path of} $D$, $\partial D$, is the attaching map of the cell in this cellulation containing $\infty$. Note that this is not necessarily the topological boundary.

$D$ is a \textit{disk diagram in} $\Sigma_{\Gamma}$, if $D$ is a disk diagram and there is a fixed continuous combinatorial map of cube complexes $F: D \to \Sigma_{\Gamma}$. By a lemma of Van Kampen, for every null-homotopic closed combinatorial path $p: S^{1} \to \Sigma_{\Gamma}$, there exists a disk diagram $D$ in $\Sigma_{\Gamma}$ such that $\partial D = p$. 

Suppose $D$ is a disk diagram in $\Sigma_{\Gamma}$ and $t$ is a $1$--cell of $D$. A \textit{dual curve, $H$, dual to $t$} is a concatenation of midcubes in $D$ that contains a midcube that intersects $t$. Every edge in $D$ is dual to exactly one maximal dual curve. The image of $H$ under the map $F: D \to \Sigma_{\Gamma}$ lies in some hyperplane $\mathcal{H} \subset \Sigma_{\Gamma}$.

\subsection{Thick spaces} \label{sec_thick_overview}

This subsection gives an overview of the definitions of a thick and algebraically thick space. The background here will not be necessary until Sections \ref{sec_thick_bounds} and \ref{sec_thick_not_alg}, so the reader may wish to skip this subsection until then.

We work with the ``strong'' thickness definitions from \cite{BD}. As we will never make reference to the weaker notions of thickness, we will drop the word ``strongly'' from our definitions.

$X$ will denote a metric space and $Y \subset X$ a subspace. $Y$ is $C$--\textit{path connected} if for any $y_1, y_2 \in Y$ there exists a path from $y_1$ to $y_2$ in $N_C(Y)$. $Y$ is $(C,L)$--\textit{quasi-convex} if for any $y_1, y_2 \in Y$, there exists an $(L,L)$--quasi-geodesic in $N_C(Y)$ connecting $y_1$ and $y_2$. 

Roughly, $X$ forms a tight network of spaces with respect to the subsets $\{ Y_{\alpha} \}_{\alpha \in A}$ if these subsets coarsely cover $X$. Furthermore, any two subsets can be connected by a sequence of subsets such that consecutive subsets in this sequence coarsely intersect in an infinite diameter set. This is formally defined below.

\begin{definition}[Tight network of subspaces] \cite[Definition 4.1]{BD}	
	
	Given $C>0$ and $L >0$,  $X$ is a $(C, L)$--\textit{tight network with respect to a collection $\{ Y_{\alpha} \}_{\alpha \in \mathcal{A}}$ of subsets} if the following hold: 
	\begin{description}
		\item[a)] Every $Y \in \{ Y_{\alpha} \}_{\alpha \in  \mathcal{A}}$  with the induced metric is $(C, L)$--quasi-convex
		\item[b)] $X = \cup_{\alpha \in  \mathcal{A}}{N_C(Y_{\alpha})}$
		\item[c)] For every $Y, Y' \in \{Y_{\alpha} \}$ and any $x \in X$ such that $N_{3C}(x)$ intersects both $Y$ and $Y'$, there exists a sequence of length $n \le L$
		\[ Y=Y_1, ~Y_2,...,~Y_{n-1},~Y_n = Y'\] with $Y_i \in \{ Y_{\alpha} \}$
		such that for all $1 \le i <n$, $N_C(Y_i) \cap N_C(Y_{i+1})$ is of infinite diameter, $L$--path connected and intersects $N_L(x)$. 
	\end{description}

\end{definition}

A metric space is \textit{wide} if every one of its asymptotic cones has cutpoints, and, additionally, every point in the space is uniformly near to a $(L,L)$--quasi-geodesic. The following definition provides a uniform version of this notion. 

\begin{definition}[Uniformly wide] \cite[Definition 4.11]{BD} A collection of metric spaces, $\{Y_{\alpha} \}_{\alpha  \in A}$, is \textit{$(C,L)$--uniformly wide} if: 
	\begin{enumerate}
		\item There exists $C, L \ge 0$ such that for every $Y \in \{Y_{\alpha} \}_{\alpha  \in A}$ and for every $y \in Y$, $y$ is in the $C$ neighborhood of some bi-infinite $(L, L)$--quasi-geodesic in $Y$. 
		
		\item Given any sequence of metric spaces $(Y_i, d_i)$ in $\{Y_{\alpha} \}$, any ultrafilter $\omega$, any sequence of scaling constants $(s_i)$ and any sequence of basepoints $(b_i)$ with $b_i \in Y_i$, it follows that the ultralimit $\lim_{\omega}{(Y_i, b_i, \frac{1}{s_i}d_i)}$ does not have cut-points.  
	\end{enumerate}
\end{definition}

Metric thickness of a space $X$, defined below, provides an inductive decomposition of $X$ into tight network of spaces. The base case consists of a set of uniformly wide spaces.

\begin{definition}[Metric thickness] \cite[Definition 4.13]{BD}	A family of metric spaces is \textit{$(C,L)$--thick of order zero} if it is $(C,L)$--uniformly wide.
	
	Given $C \ge 0 $ and $k \in \mathbb{N}$ we say that a metric space $X$ is \textit{ $(C, L)$--thick of order at most $k$ with respect to a collection of subsets $\{ Y_{\alpha} \}$ } if 
	
	\begin{enumerate}
		\item $X$ is a $(C, L)$--tight network with respect to $\{ Y_{\alpha} \}$. 
		\item The subsets in $\{ Y_{\alpha} \}$ endowed with the restriction of the metric on $X$ compose a family of spaces that are $(C,L)$--thick of order at most $k-1$. 
	\end{enumerate}
	
	Furthermore, $X$ is said to be \textit{thick of order k} (with respect to $\{ Y_{\alpha} \}$) if it is $(C, L)$--thick of order at most $k$ (with respect to $\{ Y_{\alpha} \}$) and for no choices of $C, L$ and $\{ Y_{\alpha} \}$ is X $(C,L)$--thick of order at most $k-1$. 
\end{definition}

The following definitions give an algebraic version for thickness. The algebraic condition often implies stronger results (see \cite{BD}).

\begin{definition}[Tight algebraic network of subgroups] \cite[Definition 4.1]{BD}	Let $C>0$, $G$ a finitely generated group and $\mathcal{H}$ a set of subgroups of $G$. $G$ is a $C$--\textit{tight algebraic network with respect to $\mathcal{H}$} if the following hold: 
	\begin{description}
		\item[a)] Every $H \in \mathcal{H}$  is $C$--quasi-convex
		\item[b)] The union of all subgroups in $\mathcal{H}$ generates a finite index subgroup of $G$.
		\item[c)] For every $H, H' \in \{\mathcal{H} \}$, there exists a sequence
		\[ H_1 = H, H_2,...,H_{n-1},H_n = H'\] with $H_i \in \{ \mathcal{H} \}$
		such that for all $1 \le i <n$, $H_i \cap H_{i+1}$ is infinite and is $C$--path connected. 
	\end{description}
	
	By \cite[Proposition 4.3]{BD}, if $G$ admits a tight algebraic network of subgroups with respect to $\mathcal{H}$ then $G$ is a tight network of subspaces with respect to the left cosets of groups in $\mathcal{H}$.  
\end{definition}

\begin{definition}[Algebraic thickness] \cite[Definition 4.13]{BD} Let $G$ be a finitely generated group. $G$ is \textit{algebraically thick of order zero} if it is wide. Given $C \ge 0 $, $G$ is \textit{ $C$--algebraically thick of order at most $k$ with respect to a finite collection of subgroups $\mathcal{H}$ } if
	
	\begin{enumerate}
		\item $G$ is a $C$--tight algebraic network with respect to $\mathcal{H}$.
		\item Every $H \in \mathcal{H}$ is algebraically thick of order at most $k-1$.
	\end{enumerate}
	
	G is algebraically thick of order $k$ if it is algebraically thick of order at most $k$ and is not algebraically thick of order $k-1$. 
\end{definition}

\section{Lambda hypergraphs and the hypergraph index} \label{sec_lambda_graphs}

We describe a sequence of hypergraphs associated to a simplicial graph. Using this construction we define the hypergraph index of a right-angled Coxeter group.

\begin{definition}[Wide and strip subgraphs] \label{def_wide_subgraphs}
Let $\Gamma$ be a simplicial graph. Let $\Omega = \Omega(\Gamma)$ denote the set of induced subgraphs of $\Gamma$ such that given $L \in \Omega$, $L = A \star B$ where $A$ and $B$ are induced subgraphs which each contain a pair of non-adjacent vertices. Furthermore, $L$ is maximal in $\Omega$, i.e. if $L \subset L'$ for some $L' \in \Omega(\Gamma)$, then $L = L'$. The subgraphs in $\Omega$ are the \textit{wide subgraphs} of $\Gamma$.  

Let $\Psi = \Psi(\Gamma)$ denote the set of induced subgraphs of $\Gamma$ such that given $L \in \Psi$, $L = A \star K$ where $A$ is a set of two non-adjacent vertices and $K$ is a non-empty clique. Furthermore, we require that if $L \subset L'$ for any  $L' \in \Omega(\Gamma) \cup \Psi(\Gamma)$ then $L = L'$. The subgraphs in $\Psi$ are the \textit{strip subgraphs} of $\Gamma$.
\end{definition}

\begin{remark} By \cite{BFHS}, $\Omega$ characterizes all maximal special subgroups of $\Gamma$ which are wide (see section \ref{sec_thick_overview} for the relevant definition).  The term ``strip subgraphs'' is used since given $L = A \star K \in \Psi$, the Cayley graph of $W_L$ is isometric to $\mathbb{Z} \times Q$, where $Q$ is isometric to a cube of dimension $|W_{K}|$. 
\end{remark}

\begin{remark} \label{rmk_wide_breakdown}
Given any wide subgraph, $L = A \star B \in \Omega(\Gamma)$, $L$ decomposes as $L = A' \star B' \star K$ where $K$ the (possibly empty) set of all vertices in $L$ which are adjacent to every other vertex of $L$. It follows that $K$ is a clique. Note that given any $a_1 \in A'$, there is always some $a_2 \in A'$ such that $a_1$ is not adjacent to $a_2$. These observations will be used throughout the paper.
\end{remark}

\begin{remark} \label{rmk_strip_subgroup_intersection}
	For $L = A \star K, L' = A' \star K' \in \Psi(\Gamma)$ distinct strip subgraphs, it follows that $A \neq A'$. For if $A = A'$, by the maximal property of strip subgraphs, there must be vertices $k \in K$ and $k' \in K'$ such that $k$ and $k'$ are not adjacent in $\Gamma$. Hence, $A \star (K \cup K')$ is contained in some subgraph of $\Omega(\Gamma)$, which is not allowed by the definition of strip subgraphs.
\end{remark}

\begin{definition}[Lambda hypergraphs] \label{def_lambda_hypergraphs}

For each integer $i \ge 0$, we define the hypergraph $\Lambda_i = \Lambda_i(\Gamma)$ inductively. For each $i$, the vertex set of $\Lambda_i$ is $V(\Gamma)$, the same as that of $\Gamma$. 

\begin{enumerate}
\item For every $L \in \Omega(\Gamma) \cup \Psi(\Gamma)$, $V(L)$ is a hyperedge of $\Lambda_0$. 
\item For $H, H' \in \Lambda_i$, set $H \equiv_i H'$ if there are hyperedges 
\[H = H_0, H_1, ..., H_n = H' \in \mathcal{E}(\Lambda_{i})\]
 such that for each $j$, $0 \le j < n$, $H_{j} \cap H_{j+1}$ contains a pair of non-adjacent vertices. A hyperedge of $\Lambda_{i+1}$ is the union of the vertices of a maximal set of pairwise $\equiv_{i}$-equivalent hyperedges of $\Lambda_{i}$. 
\end{enumerate}
\end{definition}

For an example of these hypergraphs, see Figure \ref{fig_hypergraph_example}. The following definition creates a tree poset structure on the set of hyperedges of the lambda hypergraphs.

\begin{definition}[Membership] \label{def_membership}
A hyperedge $H \in \Lambda_i(\Gamma)$ is \textit{a member of} the hyperedge $H' \in \Lambda_{i+1}(\Gamma)$ if $H'$ was constructed from the $\equiv_i$ equivalence class of $H$. Additionally, if there is a sequence of hyperedges
\[ H_1 \in \Lambda_i(\Gamma),~ H_2 \in \Lambda_{i+1}(\Gamma),~... ,~H_n \in \Lambda_{i+n-1}(\Gamma) \]
such that for $1 \le j < n$, $H_j$ is a member of $H_{j+1}$, then we also say $H_1$ is a member of $H_n$. It follows that given integers $j > i \ge 0$, and a hyperedge, $H \in \mathcal{E}(\Lambda_i)$, then $H$ is a member of an unique hyperedge of $\Lambda_{i+j}$.
\end{definition}

\begin{remark} \label{rmk_membership_subtlety}
	A subtle point of the membership definition is that given a hyperedge $H \in \Lambda_i$, all vertices of $H$ may be contained in the hyperedge $H' \in \Lambda_{i+1}$, but $H$ does not necessarily have to be a member of $H'$.
	
	Consider, for instance, the graph in Figure \ref{fig_membership_example}. $H = \{v_1, v_2, v_3, v_4\}$ is a hyperedge of $\Lambda_{0}$. Note also that the intersection of $H$ with any other hyperedge of $\Lambda_0$ does not contain a pair of non-adjacent vertices. It follows there is a hyperedge $Z = \{v_1, v_2, v_3, v_4\}$ of $\Lambda_1$. 
	
	It can also be readily checked, that some other hyperedge $Z'$ of $\Lambda_{1}$ contains every vertex of $\Gamma$. In particular, the vertices $v_1, v_2, v_3$ and $v_4$ are also in $Z'$. However, $H$ is a member of $Z$, but $H$ is \textit{not} a member of $Z'$. 
\end{remark}

\begin{figure}[htp]
	\centering
	\begin{overpic}[scale=.55]{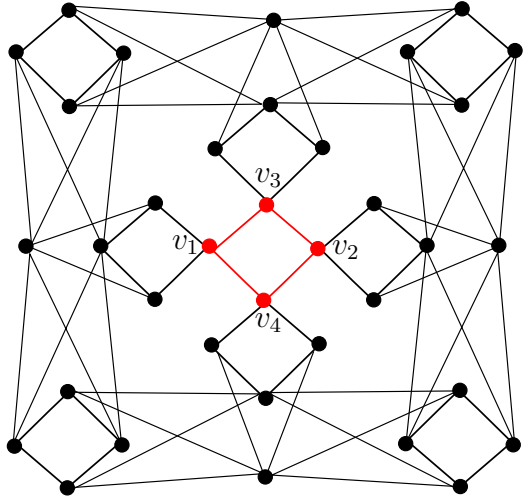}

		\put(32,48){$v_1$}
		\put(63,47){$v_2$}
		\put(48,61){$v_3$}
		\put(48,33){$v_4$}
		
	\end{overpic}
	
	\caption{A graph illustrating Remark \ref{rmk_membership_subtlety} } \label{fig_membership_example}
\end{figure}

A pair of non-adjacent vertices uniquely determines a hyperedge in $\Lambda_1(\Gamma)$:

\begin{lemma} \label{lemma_unique_membership}
	Let $s, t \in \Gamma$ be two non-adjacent vertices. There is an unique hyperedge, $Z$, of $\Lambda_1(\Gamma)$ such that any hyperedge of $\Lambda_0(\Gamma)$ that contains $s$ and $t$ is a member of $Z$. 
\end{lemma}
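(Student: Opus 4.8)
The plan is to read $Z$ off directly from the equivalence relation $\equiv_0$ on $\mathcal{E}(\Lambda_0(\Gamma))$ that is used to build $\Lambda_1(\Gamma)$. First I would check that $\equiv_0$ is genuinely an equivalence relation on $\mathcal{E}(\Lambda_0(\Gamma))$: it is reflexive (use the length-zero chain, or note that every hyperedge of $\Lambda_0(\Gamma)$, being a wide or strip subgraph, contains a pair of non-adjacent vertices), symmetric (reverse a chain), and transitive (concatenate two chains). Consequently the $\equiv_0$--equivalence classes are precisely the maximal sets of pairwise $\equiv_0$--equivalent hyperedges, so by Definition \ref{def_lambda_hypergraphs} the hyperedges of $\Lambda_1(\Gamma)$ are in bijection with these classes, each hyperedge being the union of the vertices of the hyperedges in its class.

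Next, let $\mathcal{C}$ denote the set of all hyperedges of $\Lambda_0(\Gamma)$ that contain both $s$ and $t$. One verifies $\mathcal{C} \neq \emptyset$: since $s,t$ are non-adjacent, $\{s,t\}$ (adjoined with a common neighbour when one exists) is contained in some element of $\Omega(\Gamma) \cup \Psi(\Gamma)$ by the maximality clauses in Definition \ref{def_wide_subgraphs}, hence in a hyperedge of $\Lambda_0(\Gamma)$. If now $H, H' \in \mathcal{C}$, then $\{s,t\} \subseteq H \cap H'$, so $H \cap H'$ contains a pair of non-adjacent vertices and hence $H \equiv_0 H'$ via the chain $H, H'$. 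Thus $\mathcal{C}$ lies inside a single $\equiv_0$--class; let $Z \in \mathcal{E}(\Lambda_1(\Gamma))$ be the hyperedge obtained from that class. By Definition \ref{def_membership}, every hyperedge in that class, and in particular every element of $\mathcal{C}$, is a member of $Z$, which establishes existence.

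For uniqueness, suppose $Z' \in \mathcal{E}(\Lambda_1(\Gamma))$ also has the property that every element of $\mathcal{C}$ is a member of $Z'$. Fix any $H \in \mathcal{C}$ (possible since $\mathcal{C} \neq \emptyset$); then $H$ is a member of both $Z$ and $Z'$, but by the final sentence of Definition \ref{def_membership} a hyperedge of $\Lambda_0(\Gamma)$ is a member of a unique hyperedge of $\Lambda_1(\Gamma)$, so $Z = Z'$. I do not anticipate a serious obstacle: this is essentially a bookkeeping lemma, and the only points that require care are confirming that $\equiv_0$ is an equivalence relation (so that ``maximal pairwise $\equiv_0$--equivalent set'' really means ``$\equiv_0$--equivalence class'') and that $\mathcal{C}$ is nonempty, the latter being what gives the uniqueness assertion its content.
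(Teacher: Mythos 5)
Your proof is correct in its core and takes essentially the same route as the paper: both arguments rest on the observation that any two hyperedges of $\Lambda_0(\Gamma)$ containing $s$ and $t$ intersect in the non-adjacent pair $\{s,t\}$, hence are $\equiv_0$--equivalent, so all such hyperedges lie in a single equivalence class and are members of the one hyperedge $Z$ of $\Lambda_1(\Gamma)$ built from that class; uniqueness then follows because a hyperedge of $\Lambda_0(\Gamma)$ is a member of a unique hyperedge of $\Lambda_1(\Gamma)$.

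The one inaccurate step is your claim that $\mathcal{C}\neq\emptyset$ for every non-adjacent pair. If $s$ and $t$ have no common neighbour, then no element of $\Omega(\Gamma)\cup\Psi(\Gamma)$ contains both: a strip subgraph $A\star K$ containing $\{s,t\}$ forces $A=\{s,t\}$ and $K$ a non-empty clique of common neighbours, while a wide subgraph $A\star B$ containing both must place them on the same side of the join, again producing common neighbours. For example, in the path on four vertices $s,a,b,t$ one has $\mathcal{C}=\emptyset$. This does not damage the argument where the lemma is actually used --- every application starts from a given hyperedge containing $s$ and $t$, exactly as the paper's own proof does --- but when $\mathcal{C}=\emptyset$ the uniqueness assertion is vacuous and can strictly speaking fail; your parenthetical ``when one exists'' hints at the issue without resolving it, and the paper's proof silently assumes it away as well.
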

\begin{proof}
	Let $H$ be a hyperedge of $\Lambda_0$ containing $s$ and $t$. $H$ is a member of some unique $Z \in \Lambda_1(\Gamma)$ formed by the $\equiv_0$ equivalence class of $H$. Suppose some other hyperedge of $\Lambda_0$, $H'$, contains $s$ and $t$. By definition, $H$ and $H'$ are in the same $\equiv_0$ equivalence class. Thus, $Z$ is well-defined and unique.
\end{proof}

Given a hyperedge $H$ of $\Lambda_i$, we define $W_{H}$ as the special subgroup of $W_{\Gamma}$ induced by the vertices of $H$.

\begin{definition}[Hypergraph Index] \label{def_hypergraph_index}
$\Gamma$ has \textit{hypergraph index} $h \in \mathbb{N}$, if some hyperedge in $\Lambda_h(\Gamma)$ contains every vertex of $\Gamma$ and no hyperedge of $\Lambda_{h-1}(\Gamma)$ contains every vertex of $\Gamma$. Additionally, it is required that the set of wide subgraphs, $\Omega(\Gamma)$, is not empty. If there is no such $h$ or $\Omega(\Gamma)$ is empty, then we say $\Gamma$ has infinite hypergraph index. The \textit{hypergraph index of a right-angled Coxeter group, $W_{\Gamma}$}, is the hypergraph index of $\Gamma$.
\end{definition}

\begin{remark} \label{rmk_infinite_hyp_index}
It is not difficult to show, given the results of \cite{BHSC}, that $\Gamma$ has hypergraph index $h = \infty$ if and only if $W_{\Gamma}$ is relatively hyperbolic.
\end{remark}

We define the \textit{realization} of $\Lambda_i(\Gamma)$. These are cosets of special subgroups of $W_{\Gamma}$ corresponding to hyperedges of $\Lambda_i(\Gamma)$, but excluding hyperedges corresponding to strip subgroups. 

\begin{definition}
	The realization $\mathcal{R}_i = \mathcal{R}_i(\Gamma)$ of a graph $\Gamma$ is the set of cosets 	
	\[ \mathcal{R}_i =  \{ gW_{H} \subset W_{\Gamma} ~ | ~ H \text{ is a hyperedge of } \Lambda_i(\Gamma),~ H \notin \Psi(\Gamma), ~ g \in W_{\Gamma} \} \]
	Recall $\mathcal{H}(\Lambda_i(\Gamma))$ is the set of hyperedges of $\Lambda_i(\Gamma)$. By $H \notin \Psi(\Gamma)$, we mean that the subgraph of $\Gamma$ induced by vertices of $H$ is not in $\Psi(\Gamma)$. We often think of the cosets in $\mathcal{R}_i$ as geometric subsets of the Davis complex $\Sigma_{\Gamma}$.
\end{definition}

We extend the membership definition to the realization cosets as follows. Given $hW_{H} \in \mathcal{R}_i$ and $h'W_{H'} \in \mathcal{R}_{i+j}$, $hW_{H}$ is a \textit{member of} $h'W_{H'}$ if $H$ is a member of $H'$ and $hW_{H} \subset h'W_{H'}$. It readily follows that if $hW_{H}$ is a member of $h'W_{H'}$ then the coset representatives can be chosen such that $h = h'$.

Recall that two subsets $A$, $B$ of some metric space are $C$--Hausdorff close if $B \subset N_C(A)$ and $A \subset N_C(B)$. $A$ and $B$ are Hausdorff close if they are $C$--Hausdorff close for some $C \ge 0$. The following lemma shows that distinct cosets in $\mathcal{R}_i$ are not Hausdorff close.

\begin{lemma} \label{lemma_equal_cosets}
	Let $L_1$ and $L_2$ be cosets in the realization $\mathcal{R}_i(\Lambda)$. If $L_1$ and $L_2$ are Hausdorff close as subsets of $\Sigma_{\Gamma}$, then $L_1 = L_2$.
\end{lemma}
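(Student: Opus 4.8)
The plan is to show that if $L_1 = g_1 W_{H_1}$ and $L_2 = g_2 W_{H_2}$ are Hausdorff close cosets in $\mathcal{R}_i(\Gamma)$, then $H_1 = H_2$ as subsets of $V(\Gamma)$ and the cosets coincide. First I would reduce to the case of special subgroups based at the identity: since $L_1$ and $L_2$ are Hausdorff close, translating by $g_1^{-1}$ we may assume $L_1 = W_{H_1}$ contains the identity vertex $1 \in \Sigma_\Gamma$, and then $L_2 = g W_{H_2}$ lies in a bounded neighborhood of $W_{H_1}$; conversely $W_{H_1} \subset N_C(gW_{H_2})$. The key geometric tool should be the characterization of cosets of special subgroups via the hyperplanes crossing them (or equivalently via the labels appearing on geodesics inside them): a geodesic in $\Sigma_\Gamma$ lying in $W_{H}$ only crosses hyperplanes of type $t \in V(H)$, and $W_H$ is convex (Lemma \ref{lemma_special_subgroups}), so $W_H$ is exactly the set of vertices reachable from $1$ by edges labelled in $V(H)$.

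The main steps are as follows. (1) Show $V(H_1) = V(H_2)$. If some $t \in V(H_1) \setminus V(H_2)$, then since $W_{H_1}$ contains an infinite geodesic ray repeatedly crossing hyperplanes of type $t$ — here one uses that $H_1 \notin \Psi(\Gamma)$ and that $\Omega(\Gamma)$-type hyperedges, hence all hyperedges of $\Lambda_i$ not coming from strips, contain a pair of non-adjacent vertices and in fact an element of infinite order whose axis crosses infinitely many type-$t$ hyperplanes (Remark \ref{rmk_wide_breakdown} gives the needed non-adjacency structure). Points far out on this ray have distance to $gW_{H_2}$ growing without bound, because crossing a hyperplane of a type not in $V(H_2)$ forces leaving the coset, contradicting $W_{H_1} \subset N_C(gW_{H_2})$. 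A symmetric argument gives $V(H_2) \subset V(H_1)$, so $H_1 = H_2 =: H$. (2) Show the cosets are equal: with $W_H \subset N_C(gW_H)$ and $1 \in W_H$, pick $h \in gW_H$ with $d(1,h) \le C$; then $g^{-1}h \in W_H$ and $d(1, g^{-1}h) \le C$ would give $gW_H = hW_H$... more carefully, one shows $g W_H$ and $W_H$ being Hausdorff close forces $g \in W_H$: the "difference" between the two cosets is detected by hyperplanes separating $1$ from $gW_H$, all of which must be of type in $V(H)$ (else again unbounded drift), but a hyperplane of type $t \in V(H)$ separating $1$ from $g W_H$ would have to separate $1$ from all of $W_H$ too — impossible since $W_H$ contains vertices on both sides of every such hyperplane (as $H$ is not a single clique once $\Omega(\Gamma) \neq \emptyset$ and $H \notin \Psi$). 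Hence no hyperplane separates $1$ from $gW_H$, so $1 \in gW_H$ and $L_1 = L_2$.

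The step I expect to be the main obstacle is (1): making precise that a hyperedge $H$ of $\Lambda_i$ with $H \notin \Psi(\Gamma)$ carries, for \emph{each} of its vertex types $t$, a bi-infinite geodesic in $W_H$ crossing infinitely many hyperplanes of type $t$, so that "Hausdorff close" genuinely controls the vertex sets. For $i = 0$ this follows from the join/strip structure in Definition \ref{def_wide_subgraphs} together with Remark \ref{rmk_wide_breakdown} (every vertex of a wide subgraph has a non-neighbor, so lies on the axis of an infinite-order element like $t$ times a non-commuting generator), and for general $i$ one induces: a hyperedge of $\Lambda_{i+1}$ is a union of $\equiv_i$-chained hyperedges of $\Lambda_i$ whose consecutive intersections contain non-adjacent pairs, and one propagates the existence of suitable axes along such a chain, combined with a ping-pong / disk-diagram argument (using the hyperplane facts and Lemma \ref{lemma_unique_membership}) to see that distinct such $H$'s cannot be merged by a bounded neighborhood. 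Once this "each type is seen at infinity" statement is in hand, the Hausdorff-closeness hypothesis does the rest fairly mechanically.
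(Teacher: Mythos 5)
Your overall architecture --- first identify the vertex sets of $H_1$ and $H_2$ by looking at rays inside the cosets, then kill the coset representative via separating hyperplanes --- is the same as the paper's, and your ``drift'' arguments are sound for those vertices that genuinely have a non-neighbor inside $H$. The genuine gap is in the step you yourself flag as the main obstacle, and it is not merely technical: it is \emph{false} that every vertex of a non-strip hyperedge $H$ has a non-neighbor inside $H$. Remark \ref{rmk_wide_breakdown} decomposes a wide subgraph as $A' \star B' \star K$ where $K$ is a (possibly nonempty) clique of vertices adjacent to \emph{every} other vertex of the subgraph; only the vertices of $A' \star B'$ have non-neighbors there. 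For $t \in K$, $t$ is central in $W_H$ and has order $2$, so no geodesic in $W_H$ crosses more than one type-$t$ hyperplane; worse, coarse geometry cannot detect these vertices at all: writing $H_j = G_j \star K_j$ with $K_j$ the clique factor, the cosets $W_{G \star K_1}$ and $W_{G \star K_2}$ are \emph{always} Hausdorff close (the $W_{K_j}$ are finite), whatever $K_1$ and $K_2$ are. So your step (1) can only prove $G_1 = G_2$; concluding that the clique factors agree requires a purely combinatorial input. The paper supplies it: if some $k \in K_1 \cup K_2$ were missing from a wide member $A = A_1 \star A_2 \star K_3$ of $H_1$, then $(A_1 \cup k \cup K_3) \star A_2$ would contradict the maximality of $A$ in $\Omega(\Gamma)$; hence $K_1 \cup K_2$ lies in every wide member of $H_1$ and of $H_2$, forcing $H_1 = H_2$. (In the triangle-free setting of Section \ref{sec_qi} the clique factors are empty and your argument would survive, but the lemma is stated and used for arbitrary hyperedges of $\Lambda_i$.)

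The same omission infects step (2). Your ``else again unbounded drift'' for a separating hyperplane $\mathcal{H}$ of type $s \notin V(H)$ needs a ray $(ab)^{\infty}$ in $W_H$ whose type-$a$ hyperplanes are disjoint from $\mathcal{H}$, i.e.\ a vertex $a$ of the wide part with $a$ not adjacent to $s$; if $s$ were adjacent to everything in $H$ there would be no drift at all ($sW_H$ is parallel to $W_H$ at distance $1$). Maximality of the wide members again rules this out, and you must invoke it, exactly as the paper does. Finally, your claim that a separating hyperplane of type $t \in V(H)$ ``would have to separate $1$ from all of $W_H$'' is not true (such a hyperplane can perfectly well cross $W_H$); the repair is easier than what you attempt: once all hyperplanes separating $1$ from $gW_H$ are shown to have type in $V(H)$, the shortest element of the coset $gW_H$ is a word in $V(H)$ and hence lies in $W_H$, so the two cosets coincide.
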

\begin{proof}
	Suppose $L_1$ and $L_2$ are $C$--Hausdorff close for some $C>0$. Without loss of generality, let $L_1 = W_{H_1}$ and $L_2 = h_2W_{H_2}$ with $H_1, H_2$ hyperedges of $\Lambda_i(\Gamma)$ and $h_2 \in W_{\Gamma}$. Set $H_1 = G_1 \star K_1$ and $H_2 = G_2 \star K_2$, where $K_1$ contains every $s \in H_1$ which is adjacent to every other vertex in $H_1$. Similarly, $K_2$ contains every $s \in H_2$ which is adjacent to every other vertex in $H_2$. $K_1$ and $K_2$ are (possibly empty) cliques.
	
	Fix $s \in G_1$. It follows there is a $s' \in G_1$ such that $s$ and $s'$ are not adjacent in $\Gamma$ (since $s \notin K_1$). The sequence of vertices, $\{s, ss',ss's, ss'ss',... \}$, and the edges connecting consecutive vertices in this sequence, forms a ray, $\beta$, in the Davis complex $\Sigma_{\Gamma}$. $\beta$ is geodesic by Tit's solution to the word problem (see \cite{Dav}) and is contained in $W_{H_1}$.
	
	Choose vertices $x, y \in \beta$ such that $d(x,y) \ge 4C + 2$. Let $\beta'$ be the segment of $\beta$ from $x$ to $y$. Let $\alpha_1$ be a geodesic from $x$ to some vertex $x' \in L_2$ and $\alpha_2$ be a geodesic from $y$ to some vertex $y' \in L_2$. We can choose $\alpha_1$ and $\alpha_2$ so that $|\alpha_1|, |\alpha_2| \le C$. Let $\gamma$ be a geodesic from $x'$ to $y'$ contained in $L_2$ (this is possible since $W_{H_2}$ is convex). Let $D$ be a disk diagram with boundary $\beta' \alpha_2 \gamma^{-1} \alpha_1^{-1}$.
	
	There are exactly $2C + 1$ occurrences of the letter $s$ in $\beta'$. Furthermore, at most $2C$ curves dual to $\beta'$ in $D$ can intersect $\alpha_1 \cup \alpha_2$. It follows that some curve dual to an edge of $\beta'$ labeled by $s$ must intersect $\gamma$. Since $\gamma$ is contained in  $L_2$, it follows that $s \in H_2$. As $s$ was an arbitrarily chosen letter of $G_1$, it follows that $G_1 \subset G_2$. By repeating the argument and switching the roles of $G_1$ and $G_2$, we conclude that $G_1 = G_2$.
	
	We next show $H_1$ and $H_2$ are the same. Let $A \in \Omega(\Lambda)$ be a member of $H_1$. By Remark \ref{rmk_wide_breakdown}, we can write $A = (A_1 \star A_2) \star K_3$ where $K_3$ contains all vertices of $A$ which are adjacent to every other vertex of $A$. It follows that $A_1 \star A_2 \subset G_1 = G_2$. We will now show that $K_1 \cup K_2 \subset A$. Let $k \in K_1 \cup K_2$ and suppose $k \notin A$. Since $H_1 = G_1 \star K_1$ and $H_2 = G_2 \star K_2$, $k$ is adjacent to every vertex of $A_1 \star A_2$. However, this implies that $(A_1 \cup k \cup K_3) \star A_2$ is a wide subgraph, contradicting the maximality of $A$ as a wide subgraph. Hence, $K_1 \cup K_2 \subset A$ and, consequently, $K_1 \cup K_2 \subset H_1$ as $A$ is a member of $H_1$. We can similarly conclude that $K_1 \cup K_2 \subset H_2$. It follows that $H_1 = H_2$. For the remainder of the proof, set $H = H_1 = H_2$.
	
	To show $L_1 = L_2$, we need to show the coset representative of $L_2$ can be chosen to be the identity. For a contradiction, assume there is some generator, $s$, in a minimal expression for $h_2$ such that $s \notin H$. Let $\mathcal{H}$ be a hyperplane in $\Sigma_{\Gamma}$ through the letter $s$ in $h_2$. $\mathcal{H}$ separates $W_{H}$ from $h_2 W_{H}$. 
	
	Suppose $A = (A_1 \star A_2) \star K \in \Omega(\Lambda)$ is a member of $H$, where $K$ contains every vertex of $A$ that is adjacent to every other vertex of $A$. There must be some $a \in A_1 \star A_2$ such that $a$ and $s$ are not adjacent in $\Gamma$. For if not, $(A_1 \cup s \cup K) \star A_2$ is a wide subgraph, contradicting the maximality of $A$. Let $b \in A_1 \star A_2$ be such that $b$ is not adjacent to $a$. Consider the infinite ray formed by concatenating the vertices $(ab)^n$, for $n \in \mathbb{N}$, which is contained in $W_H$. Every hyperplane dual to an edge labelled by $a$ cannot cross $\mathcal{H}$. Since any path from $(ab)^n$ to $\mathcal{H}$ must cross each of these pairwise non-intersecting hyperplanes, it follows that $d(\mathcal{H}, (ab)^n) \ge n$. However, this implies there are points in $L_1$ which are arbitrarily far from $L_2$, a contradiction.	
\end{proof}

An important consequence of the next lemma is that if a neighborhood of a coset in $\mathcal{R}_i$ intersects another coset in $\mathcal{R}_i$ in an infinite diameter set, then both these cosets are members of a common coset of $\mathcal{R}_{i+1}$. 

\begin{lemma} \label{lemma_infinite_intersection}
	Let $L_1$ and $L_2$ be cosets in $\mathcal{R}_i(\Gamma)$, thought of as subsets of $\Sigma_{\Gamma}$. Suppose for some $C>0$, $N_C(L_1) \cap L_2$ is an infinite diameter subset of $\Sigma_{\Gamma}$, then the following are true:
	
	\begin{enumerate}
		\item 
		Either $L_1 \cap L_2$ has infinite diameter or there is some $H \in \Omega(\Gamma) \cup \Psi(\Gamma)$ and $h \in W_{\Gamma}$ such that $L_1 \cap hW_H$ and $L_2 \cap hW_H$ each have infinite diameter.
		
		\item 
		There is some coset $L_3 \in \mathcal{R}_i(\Gamma)$ such that $N_M(L_1) \cap L_3$ and $N_M(L_2) \cap L_3$ each have infinite diameter, where $M$ is a constant one larger than the maximal clique size of $\Gamma$.
		\item 
		$L_1, L_2$ and $L_3$, as above, are all members of a common coset in $\mathcal{R}_{i+1}$. 
	\end{enumerate} 
\end{lemma}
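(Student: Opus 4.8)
\emph{The plan is to prove the three parts in order, using as a technical engine a disk-diagram argument in the spirit of the proof of Lemma~\ref{lemma_equal_cosets}, refined by a pigeonhole over the finite vertex set of $\Gamma$.}

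\emph{The engine.} Normalize $L_1=g_1W_{H_1}$, $L_2=g_2W_{H_2}$ with $H_1,H_2$ hyperedges of $\Lambda_i(\Gamma)$. The hypothesis yields $x_n,y_n\in L_2$ with $d(x_n,y_n)\ge n$ and $d(x_n,L_1),d(y_n,L_1)\le C$. I would form a disk diagram $D_n$ with boundary $\gamma_n\,\alpha_n\,\delta_n^{-1}\,\beta_n^{-1}$, where $\gamma_n\subset L_2$ is a geodesic from $x_n$ to $y_n$, $\delta_n\subset L_1$ is a geodesic between feet of $x_n,y_n$ on $L_1$, and $\alpha_n,\beta_n$ are geodesics of length $\le C$. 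Since $\gamma_n$ and $\delta_n$ are geodesics, no dual curve of $D_n$ has both endpoints on $\gamma_n$ or both on $\delta_n$, so at least $n-2C$ dual curves run from $\gamma_n$ to $\delta_n$. Each such curve has a single type, which labels an edge of $\gamma_n\subset L_2$ and an edge of $\delta_n\subset L_1$, hence lies in $V(H_1)\cap V(H_2)$, and its hyperplane in $\Sigma_\Gamma$ meets both $L_1$ and $L_2$. A maximal run of consecutive edges of $\gamma_n$ whose types pairwise commute is a reduced word in a finite special subgroup, so has length at most the maximal clique size of $\Gamma$; since $\gamma_n$ has at most $2C$ edges whose type is not one of these ``crossing types'', for $n$ large two crossing types $s,t$ must be non-adjacent in $\Gamma$. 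Passing to a subsequence, $s$ and $t$ are fixed, so there is a non-adjacent pair $\{s,t\}\subset V(H_1)\cap V(H_2)$; a closer look at the $\langle s,t\rangle$-ladder bounded in $D_n$ by a type-$s$ and a type-$t$ dual curve (whose other edge-types are forced to be adjacent to both $s$ and $t$) then produces a clique $K$ of common neighbours of $s$ and $t$, an element $f$, and, with $E=\{s,t\}\star K$, a coset $fW_E$ coarsely containing a line that lies in both $N_{C'}(L_1)$ and $N_{C'}(L_2)$.

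\emph{Part 1.} Take $E$ maximal such that some coset $fW_E$ lies in $N_{C'}(L_1)\cap N_{C'}(L_2)$; by the engine $E$ is not a clique. If $E$ extends to (or is) a maximal wide subgraph $H\in\Omega(\Gamma)$ or a maximal strip subgraph $H\in\Psi(\Gamma)$, then using convexity of special subgroups (Lemma~\ref{lemma_special_subgroups}) and that $W_E$ surjects onto an infinite dihedral group, both $L_1\cap hW_H$ and $L_2\cap hW_H$ have infinite diameter for a suitable representative $h$ --- the second alternative. If $E$ lies in no element of $\Omega(\Gamma)\cup\Psi(\Gamma)$ (for instance $E$ an induced cycle of length $\ge 5$, or a non-edge with no common neighbour), then $W_E$ behaves hyperbolically, and I would argue that a coarse intersection of two convex subcomplexes of $\Sigma_E$ agrees up to finite Hausdorff distance with their honest intersection, so that $fW_E$ being coarsely inside both $L_1$ and $L_2$ forces $L_1\cap L_2$ to have infinite diameter --- the first alternative.

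\emph{Parts 2 and 3.} For Part 2: if $L_1\cap L_2$ has infinite diameter, or $H\in\Psi(\Gamma)$ is itself maximal in $\Omega(\Gamma)\cup\Psi(\Gamma)$, take $L_3:=L_1$; in the strip case one uses that $hW_H$ is a product of a line with the finite complex $\Sigma_K$ of diameter $<M$ while $L_1\cap hW_H$ and $L_2\cap hW_H$ each contain a full $\langle s,t\rangle$-coset, so $N_M(L_1)\cap L_2$ has infinite diameter. Otherwise $H$ is a member (Definition~\ref{def_membership}) of a unique hyperedge $H_3$ of $\Lambda_i(\Gamma)$ with $V(H)\subseteq V(H_3)$; since $V(H)$ then properly contains $E$ (which is in no strip) it has two disjoint non-edges, so $H_3\notin\Psi(\Gamma)$ and $L_3:=hW_{H_3}\in\mathcal{R}_i(\Gamma)$, with $L_1\cap L_3\supseteq L_1\cap hW_H$ and $L_2\cap L_3\supseteq L_2\cap hW_H$ of infinite diameter. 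For Part 3: in each case the constructions exhibit non-adjacent pairs in $V(H_1)\cap V(H_3)$ and $V(H_2)\cap V(H_3)$ (with $H_3=H_1$ in the first case), so $H_1\equiv_i H_3\equiv_i H_2$ and $H_1,H_2,H_3$ lie in one $\equiv_i$-class, hence are members of a common hyperedge $H'$ of $\Lambda_{i+1}(\Gamma)$. Here $H'\notin\Psi(\Gamma)$: by Remark~\ref{rmk_strip_subgroup_intersection} a maximal strip subgraph is the only $\Lambda_0$-hyperedge contained in it, so if $H'\in\Psi(\Gamma)$ its $\equiv_i$-class is the singleton $\{H'\}$, forcing $H'=H_1$ and contradicting $H_1\notin\Psi(\Gamma)$. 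Finally, choosing $p\in L_1\cap L_3$ and $q\in L_2\cap L_3$ gives $L_1,L_3\subset pW_{H'}$ and $p^{-1}q\in W_{H_3}\subset W_{H'}$, so $L_2=qW_{H_2}\subset pW_{H'}$; thus $L_1,L_2,L_3$ are members of $pW_{H'}\in\mathcal{R}_{i+1}(\Gamma)$.

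\emph{Main obstacle.} The two delicate points are: (i) the ladder analysis in the engine, together with the ``no wide/strip'' subcase of Part~1, where one must convert a coarse intersection into an honest infinite-diameter intersection while controlling exactly which auxiliary labels occur; and (ii) the coset bookkeeping in Part~3, which is genuinely subtle precisely when $H$ is a non-mergeable maximal strip and $L_1\cap L_2$ is finite --- I expect this case either does not actually occur (one shows $L_1\cap L_2$ is already infinite) or requires a finer analysis of how $K$ meets $H_1$, $H_2$ and $H'$.
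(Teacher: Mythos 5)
Your overall strategy --- a disk diagram spanning geodesics in $L_1$ and $L_2$, a count of the dual curves crossing from one side to the other, a pigeonhole over the finite vertex set to find a non-adjacent pair of crossing types, and then membership bookkeeping for parts 2 and 3 --- is the paper's strategy, and your treatment of parts 2 and 3 (including the check that the ambient $\Lambda_{i+1}$-hyperedge is not a strip, a point the paper leaves implicit) is essentially sound. The gap is exactly where you flag it: the passage from ``there exist two non-adjacent crossing types $s,t$'' to an honest infinite-diameter intersection of each $L_j$ with a single coset $hW_H$, $H\in\Omega(\Gamma)\cup\Psi(\Gamma)$. Your ladder analysis does not deliver this. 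First, the dual curves lying between a type-$s$ curve and a type-$t$ curve are not all forced to cross both of them, so their types are not all adjacent to both $s$ and $t$; the region between two non-crossing dual curves in a disk diagram need not be a product, and so the clique $K$ and the coset $fW_E$ are not actually produced by the argument as written. Second, even granting $fW_E$, your output is only that it lies in $N_{C'}(L_1)\cap N_{C'}(L_2)$, and the upgrade from this coarse containment to ``$L_1\cap hW_H$ and $L_2\cap hW_H$ each have infinite diameter'' (and, in your last subcase, to ``$L_1\cap L_2$ has infinite diameter'') is asserted rather than proved; statement~1 of the lemma is about honest intersections.

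The paper closes this gap with a sharper diagram argument. Applying Lemma 2.6 of Wise's book, one may assume no two dual curves emanating from $f_1\subset L_1$ cross each other; taking the two connecting geodesics $\alpha_1,\alpha_2$ at distance at least $2C+(2C+1)M$ apart, a pigeonhole then produces $M$ consecutive dual curves of $f_1$ that are simultaneously $M$ consecutive dual curves of $f_2$. These bound a genuine Euclidean rectangle $f_1'\times b$ inside the diagram with $|f_1'|=M$, so $f_1'$ contains a non-adjacent pair $a_1,a_2$ and every letter of $b$ commutes with every letter of $f_1'$. This yields explicit points $h(a_1a_2)^n\in L_1\cap hW_{\{a_1,a_2\}\star B}$ and $hb(a_1a_2)^n\in L_2\cap hW_{\{a_1,a_2\}\star B}$, where $B$ is the letter set of $b$: if $B=\emptyset$ one gets $L_1\cap L_2$ of infinite diameter outright, and otherwise $\{a_1,a_2\}\star B$ sits inside some maximal $H\in\Omega(\Gamma)\cup\Psi(\Gamma)$. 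In particular your troublesome subcase where $E$ lies in no element of $\Omega(\Gamma)\cup\Psi(\Gamma)$ never arises, and no ``coarse intersection equals honest intersection'' principle is needed. I recommend replacing the ladder step with this rectangle extraction; the rest of your outline then goes through essentially as the paper does it.
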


\begin{proof}
	We start by proving the first statement. Let $M$ be one larger than the maximal clique size in $\Gamma$. Let $\alpha_1$ and $\alpha_2$ be geodesics in the $1$--skeleton of $\Sigma_{\Gamma}$ from $L_1$ to $L_2$ such that $|\alpha_1|, |\alpha_2| \le C$ and such that $\alpha_1$ and $\alpha_2$ are at a distance at least $2C + (2C+1)M$ apart. Let $f_1$ be a geodesic in $L_1$ from the start of $\alpha_1$ to the start of $\alpha_2$, and let $f_2$ be a geodesic in $L_2$ from the endpoint of $\alpha_1$ to the endpoint of $\alpha_2$. These geodesics exist since special subgroups are convex. Let $D$ be a disk diagram with boundary $f_1\alpha_2f_2^{-1}\alpha_1^{-1}$.
	
	By \cite[Lemma 2.6]{Wise}, we can choose $D$ and $f_1$ such that two distinct curves dual to $f_1$ in $D$ do not intersect each other. At most $2C$ curves dual to $f_1$ can intersect $\alpha_1 \cup \alpha_2$. It follows there are $(2C+1)M$ consecutive curves dual to $f_1$ which intersect $f_2$. At most $2C$ dual curves to $f_2$ can intersect $\alpha_1 \cup \alpha_2$. By the pigeonhole principle, there must be a set of $M$ consecutive dual curves to $f_1$ which is also a set of $M$ consecutive dual curves to $f_2$. It follows there is a subdiagram, $D'$, of $D$ which is isometric to an euclidean rectangle connecting with opposite sides on $f_1$ and $f_2$. $D' = f_1' \times b$ where $f_1'$ is a subpath of $f_1$ and $b$ is a geodesic from $f_1'$ to $f_2$.  Additionally, it follows that $|f_1'| = M$. 
	
	Let $A$ be the set of generators in $\Gamma$ which appear as a letter of $f_1'$ and $B$ the set of generators of $\Gamma$ which appear as a letter of $b$. Since $f_1'$ is a geodesic and its length is larger than the maximal clique size in $\Gamma$, it follows by Tits solution to the word problem (see \cite{Dav}) that $A$ must contain two non-adjacent vertices, say $a_1$ and $a_2$. Set $A' = \{a_1, a_2 \}$.
	
	Let $h$ be a geodesic in $\Sigma_{\Gamma}$ from the identity element to the start point of $f_1'$. If $B$ is empty ($b$ is a vertex), it follows that the vertex $h(a_1a_2)^n$ is contained in both $L_1$ and $L_2$ for all positive integers $n$. Thus, claim 1 is true for this case. On the other hand, consider the case where $B$ is nonempty.  In this case, $h(a_1a_2)^n$ is contained in $L_1$ and $hb(a_1a_2)^n$ is contained in $L_2$ for all positive integers $n$. It follows that $A' \star B$ is a subgraph of some maximal graph $H \in \Omega(\Gamma) \cup \Psi(\Gamma)$. Furthermore, $hW_H$ intersects both $L_1$ and $L_2$ in an infinite diameter set. Hence, claim 1 also follows for this other case as well. 
	
	We now show that claim 1 implies claim 2. If $H \in \Psi(\Gamma)$ then claim 2 follows by setting $L_3 = L_1$. Otherwise, if $H \in \Omega(\Gamma)$, then $hW_{H}$ is a member of some $L_3 \in \mathcal{R}_{i}$ and claim 2 follows.	
	
	We now show that claim 3 follows from claim 2. Consider first the following fact. Let $G_1$ and $G_2$ be induced subgraphs of $\Gamma$ and $g_1, g_2 \in W_{\Gamma}$. Furthermore, suppose that $g_1W_{G_1} \cap g_2W_{G_2}$ has infinite diameter. As $W_{G_1}$ and $W_{G_2}$ are convex, $g_1W_{G_1} \cap g_2W_{G_2}$ contains a geodesic of length greater than $M$. By Tit's solution to the word problem, it follows that there must be two non-adjacent vertices $s, t \in \Gamma$ that appear as edges of this geodesic. Hence, $G_1 \cap G_2$ contains two non-adjacent vertices, namely $s$ and $t$. 
	
	Set $L_1 = h_1W_{H_1}$ and $L_2 = h_1W_{H_2}$. By the proof of claim 2, there is an induced subgraph $H \in \Psi(\Gamma) \cup \Omega(\Gamma)$ and $h \in \Gamma$ such that $L_1 \cap hW_H$ and $hW_H \cap L_2$ are infinite. By the above fact, $H_1 \cap H$ and $H_2 \cap H$ each contain two non-adjacent vertices. Thus, $H, H_1$ and $H_2$ are in a common hyperedge of $\Lambda_{i+1}$. 
	
	If $H \in \Psi(\Gamma)$ we had set $L_3 = L_1$. Otherwise, $H \in \Omega(\Gamma)$ and we found $L_3 \in \mathcal{R}_{i}$ containing the member $hW_H$. In either case, $L_1, L_2$ and $L_3$ are all members of a common coset in $\mathcal{R}_{i+1}$ as  $H, H_1$ and $H_2$ are in a common hyperedge of $\Lambda_{i+1}$. Claim 3 then follows.
\end{proof}

\section{Coarse intersection degree} \label{sec_coarse_intersection}

In this section we define the coarse intersection degree of a collection of subspaces, $\mathcal{M}$, of a metric space. The coarse intersection degree is closely related to the notion of a tight network, defined in section \ref{sec_thick_overview}. We then explore the relationship between the coarse intersection degree of the realization $\mathcal{R}_i(\Gamma)$ and the hypergraph index of $\Gamma$.

\begin{definition} \label{def_coarse_intersection}
	Let $X$ be a metric space and $\mathcal{M}$ a collection of subspaces of $X$. The \textit{coarse intersection degree} of $\mathcal{M}$ is the smallest integer $d$ such that there are collections of subspaces $\{\mathcal{M}=\mathcal{M}_0,~ \mathcal{M}_1,~ ...,~ \mathcal{M}_d \}$ and a constant $C>0$ satisfying: 
	\begin{enumerate}
		\item Given $A \in \mathcal{M}_i$, there is a collection of elements of $\mathcal{M}_{i-1}$, $\{ B_j \}_{j \in J}$, such that
		${A \subset \bigcup_{j \in J}{ N_C( B_{j} )}}$ and ${\bigcup_{j \in J}{ B_{j}} \subset A}$. Given $j \in J$, we refer to $B_j$ as a \textit{piece of} $A$.		
		\item If $A, A' \in \mathcal{M}_{i-1}$ are pieces of some $B \in \mathcal{M}_{i}$, then there is a sequence of pieces of $B$, $A=A_1,..., A_m = A'$, such that $N_C(A_j) \cap A_{j+1}$ has infinite diameter and is path connected for $1 \le j < m$.
		\item There is some $A \in \mathcal{M}_d$ such that given any $B \in \mathcal{M} = \mathcal{M}_0$, it follows that $B \subset N_C(A)$. 
	\end{enumerate}
	We call $C$ a \textit{coarse intersection constant for $\mathcal{M}$} and $\{\mathcal{M}=\mathcal{M}_0,~ \mathcal{M}_1,~ ...,~ \mathcal{M}_d \}$ a \textit{coarse intersection sequence}.
\end{definition}

\begin{remark} \label{rmk_coarse_intersection_qi}
Suppose $\phi: X \to Y$ is a quasi-isometry, and $\mathcal{M}$ is a collection of subspaces of $X$. Let $D>0$ be any constant. The coarse intersection degree of $\mathcal{M}$ is the same as that of the collection of subspaces, $\{ N_D(\phi(A)) | A \in \mathcal{M} \}$, of $Y$.
\end{remark}

\begin{remark} \label{rmk_coarse_intersection_of_sequence}
Suppose $\{\mathcal{M}=\mathcal{M}_0,~ \mathcal{M}_1,~ ...,~ \mathcal{M}_d \}$ is a coarse intersection sequence for $\mathcal{M}$. It follows that the coarse intersection degree of $\mathcal{M}_i$ is $d - i$.
\end{remark}

\begin{lemma} \label{lemma_coarse_intersection_degree}
	Let $\Gamma$ be a simplicial graph with hypergraph index $h < \infty$, then the coarse intersection degree of the realization $\mathcal{R}_0(\Gamma)$, regarded as a collection of subspaces of the Davis complex $\Sigma_{\Gamma}$, is $h$.
\end{lemma}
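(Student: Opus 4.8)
The plan is to prove separately that the coarse intersection degree of $\mathcal{R}_0(\Gamma)$ is at most $h$ and that it is at least $h$. For the upper bound the sequence of realizations $\{\mathcal{R}_0(\Gamma),\mathcal{R}_1(\Gamma),\dots,\mathcal{R}_h(\Gamma)\}$ is the natural candidate for a coarse intersection sequence, and one verifies the three conditions of Definition \ref{def_coarse_intersection} for it. For the lower bound I would show that along \emph{any} coarse intersection sequence $\{\mathcal{R}_0=\mathcal{M}_0,\dots,\mathcal{M}_d\}$ each term $\mathcal{M}_i$ is coarsely trapped inside $\mathcal{R}_i$, and then deduce $d\ge h$ from the defining property of the hypergraph index.

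For the upper bound, fix $gW_H\in\mathcal{R}_{i+1}$, so that $H$ is the union of a maximal $\equiv_i$-class $H^{(1)},\dots,H^{(n)}$ of hyperedges of $\Lambda_i$ (Definition \ref{def_lambda_hypergraphs}); take as the pieces of $gW_H$ the cosets $g'W_{H^{(m)}}$ with $g'\in gW_H$ and $H^{(m)}\notin\Psi(\Gamma)$, together with finitely many translates of such cosets accounting for each strip member. Condition 1 asks that these boundedly cover $gW_H$: an edge of $gW_H$ whose label appears in a non-strip member lies in one of the chosen cosets, and the remaining labels lie in strip members $A^{(m)}\star K^{(m)}$, for which $W_{K^{(m)}}$ is finite and $A^{(m)}$ is contained in a non-strip member (a strip is $\equiv_i$-joined to anything else only through its unique non-adjacent pair, by Remark \ref{rmk_strip_subgroup_intersection}), so the strip coset is a finite union of translates of non-strip-member cosets, and every edge of $gW_H$ lies within a bounded distance (at most the maximal clique size of $\Gamma$) of a piece. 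Condition 2 follows by connecting two pieces through a chain that alternately slides the basepoint along a single generator of $H$ and, at each basepoint $p$, runs through the chain $pW_{H^{(c_0)}},\dots,pW_{H^{(c_r)}}$ provided by the $\equiv_i$-relation, using the identity $pW_{H^{(m)}}=psW_{H^{(m)}}$ for $s\in W_{H^{(m)}}$ to realize consecutive stretches at a common basepoint; consecutive cosets in such a chain meet in a coset of a special subgroup whose defining graph contains a non-adjacent pair, hence in an infinite-diameter, convex, path-connected set. Condition 3 holds with $A=W_{Z_h}$ for $Z_h\in\Lambda_h$ the hyperedge containing every vertex: since $\Gamma$ is not a strip (otherwise $\Omega(\Gamma)$ is empty and $h=\infty$), $Z_h\notin\Psi(\Gamma)$, so $W_{Z_h}=W_\Gamma\in\mathcal{R}_h$, and as a subset of $\Sigma_\Gamma$ it is the full $0$-skeleton and coarsely contains every coset of $\mathcal{R}_0$. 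Since $\Gamma$ is finite one constant works throughout, so the coarse intersection degree is at most $h$.

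For the lower bound, let $\{\mathcal{R}_0=\mathcal{M}_0,\dots,\mathcal{M}_d\}$ be a coarse intersection sequence with constant $C$. I would first prove by induction on $i$ that there are constants $C_i$ with the property that every element of $\mathcal{M}_i$ lies in $N_{C_i}(Z)$ for some $Z\in\mathcal{R}_i$. The case $i=0$ is immediate. For the inductive step, an element $A\in\mathcal{M}_{i+1}$ has pieces $\{B_j\}\subseteq\mathcal{M}_i$, each coarsely inside some $Z_j\in\mathcal{R}_i$ by induction, and chained by condition 2 so that consecutive $Z_j$ have infinite-diameter coarse intersection; hence by Lemma \ref{lemma_infinite_intersection} consecutive $Z_j$ lie in a common coset of $\mathcal{R}_{i+1}$, and since each hyperedge of $\Lambda_i$ has a unique $\Lambda_{i+1}$-parent (Definition \ref{def_membership}) and the parent of a non-strip hyperedge is again non-strip (by Remark \ref{rmk_strip_subgroup_intersection} and maximality of strip subgraphs), all the $Z_j$ share a single parent $Z\in\mathcal{R}_{i+1}$, so $A\subseteq N_{C+C_i}(Z)$. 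Now suppose $d<h$. By condition 3 some $A\in\mathcal{M}_d$ coarsely contains every coset of $\mathcal{R}_0$, and the induction gives $A\subseteq N_{C_d}(gW_H)$ for some $gW_H\in\mathcal{R}_d$; fixing a maximal wide subgraph $L\in\Omega(\Gamma)$ (which exists as $h<\infty$), every translate $g'W_L$ lies in $N_{C+C_d}(gW_H)$, and since $g'\in g'W_L$ this forces the entire $0$-skeleton, hence $\Sigma_\Gamma$, into a bounded neighborhood of $gW_H$. On the other hand, as ``contains every vertex'' is preserved under membership and no hyperedge of $\Lambda_{h-1}$ contains every vertex, $V(H)\ne V(\Gamma)$; choose $v\notin V(H)$. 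No element of $W_H$ involves the letter $v$, and $W_H$ is convex, so $gW_H$ lies entirely on one side of every hyperplane of type $v$; moreover $v$ is not a cone vertex of $\Gamma$, for $H$ is non-strip and so has a maximal wide subgraph among its members, and a cone vertex lies in every maximal wide subgraph, forcing $v\in V(H)$. Taking $u\in V(\Gamma)$ not adjacent to $v$ and a vertex $w_0\in gW_H$, along the geodesic $w_0,w_0v,w_0vu,w_0vuv,w_0vuvu,\dots$ the hyperplane dual to the $k$-th of its $v$-edges lies at distance at least $k-1$ from $gW_H$, because a path to $gW_H$ must cross the previous $k-1$ of these pairwise non-crossing type-$v$ hyperplanes; hence $gW_H$ is not coarsely dense in $\Sigma_\Gamma$, a contradiction. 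Therefore $d\ge h$, and the coarse intersection degree equals $h$.

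The step I expect to be the main obstacle is the bookkeeping forced by strip subgraphs. They are excluded from the realizations yet can occur as members of the $\Lambda_i$-hyperedges --- indeed a generator of such a hyperedge may appear only in the clique factor of a strip member --- so at several points one must argue that a strip coset is coarsely negligible (a finite union of translates of wide-subgraph cosets, since its clique factor is finite and its infinite-dihedral factor embeds in a wide subgraph) and check that this does not spoil the chaining for condition 2 or the membership-uniqueness for the lower-bound induction; this rests on a careful analysis of how strip subgraphs sit relative to wide subgraphs. A secondary point is exhibiting a single uniform constant for every clause of Definition \ref{def_coarse_intersection}, including path-connectedness of the coarse intersections, which uses the finiteness of $\Gamma$ and the fact that an intersection of cosets of special subgroups is again such a coset.
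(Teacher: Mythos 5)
Your proposal is correct and follows essentially the same route as the paper: the upper bound uses $\{\mathcal{R}_0,\dots,\mathcal{R}_h\}$ as the coarse intersection sequence (with strip cosets absorbed into neighborhoods of non-strip members via Remark \ref{rmk_strip_subgroup_intersection}, and condition 2 verified by sliding the basepoint along a geodesic while chaining through $\equiv_i$-classes), and the lower bound is the same induction showing each $\mathcal{M}_i$ is coarsely trapped in $\mathcal{R}_i$ via Lemma \ref{lemma_infinite_intersection}, followed by the same separating-hyperplane argument with a vertex $v\notin V(H)$ and a non-neighbor $u$. The strip-subgraph bookkeeping you flag as the main obstacle is handled in the paper exactly as you anticipate, by deleting strip entries from the chain and noting no two consecutive entries are deleted.
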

\begin{proof}
We show that $\{\mathcal{R}_0, \mathcal{R}_1,...,\mathcal{R}_h \}$ satisfies definition \ref{def_coarse_intersection} where the pieces of an element of $\mathcal{R}_i$ are its members in $\mathcal{R}_{i-1}$. Let $M$ be a constant one larger than the maximal clique size of $\Gamma$. We use $M$ as the coarse intersection constant. 

We first show that with these choices the first criteria of definition \ref{def_coarse_intersection} is satisfied.  To see this, let $A \in \mathcal{R}_i$ and $g$ a group element in $A$. Either $g \in B$ for some member of $A$, $B \in \mathcal{R}_{i-1}$, or $g$ lies on a coset $fW_F$ where $F$ is a strip subgraph and $f \in W_{\Gamma}$. In the latter case, by the definition of the Lambda hypergraphs and Remark \ref{rmk_strip_subgroup_intersection}, there is some $F' \in \Lambda_{i-1}$ such that $F \cap F'$ contains the two non-adjacent vertices of $F$. It follows that  $g \in fW_F \subset N_M(fW_{F'})$. Hence $A$ is contained in the $M$ neighborhood of its members. Furthermore, every member of $A$ is contained in $A$. The first criteria thus holds. 

We now show the third criteria of definition \ref{def_coarse_intersection} is satisfied. Since $h$ is the hypergraph index of $\Gamma$, by definition some hyperedge $H$ of  $\Lambda_h$ contains every vertex of $\Gamma$. It then follows that $W_H = W_{\Gamma} \in \mathcal{R}_h$. Hence the third criteria is satisfied.

We now prove the second criteria of definition \ref{def_coarse_intersection}. Given $n$, such that $1 \le n \le h$, let $A = fW_F \in \mathcal{R}_{n}$ and $B_1, B_2 \in \mathcal{R}_{n-1}$ any two members of $A$. Let $p \in B_1$ and $q \in B_2$ be points in the $1$--skeleton of $\Sigma_{\Gamma}$. Let $\alpha = s_1s_2...s_m$, for $s_j \in V(\Gamma)$, be a geodesic connecting $p$ and $q$ which lies in $A$ (this is possible by the convexity of special subgroups, Lemma \ref{lemma_special_subgroups}).

For $t < m$ there is a sequence of hyperedges $G_0^t, G_1^t, ...,G_l^t \in \Lambda_{n-1}$ such that $G_j^t$ is a member of $F$ (where $A = fW_F$), $G_j^t \cap G_{j+1}^t$ contains a pair of non-adjacent vertices, $s_t \in G_0^t$ and $s_{t+1} \in G_l^t$. Furthermore, we can choose these sequences such that $G_l^t = G_0^{t+1}$. (We note for later use that $l$ is bounded by a constant only depending on $\Gamma$.) 

Let $G_{i_0}^t, G_{i_1}^t, ..., G_{i_k}^t$ be the subsequence of $G_0^t, G_1^t, ...,G_l^t$ obtained by deleting elements which are strip subgraphs. By Remark \ref{rmk_strip_subgroup_intersection}, no two consecutive elements are deleted. Furthermore, for $j < k$, $N_M(W_{G_{i_j}}) \cap W_{G_{i_{j+1}}}$ has infinite diameter and is path connected.

Let $g$ be a geodesic from the identity element to $p$. For each $t < m$, we can define the sequence of members of $A$:
\[S_t = \{gs_1...s_{t-1}G_{i_0}^t, gs_1...s_tG_{i_1}^t, gs_1...s_tG_{i_2}^t, ..., gs_1...s_tG_{i_k}^t \}\]

The concatenation of the sequences: $\{S_1, S_2, ..., S_{l-1}\}$ satisfies the second criteria of definition \ref{def_coarse_intersection}.

We now claim that $h$ is the smallest integer satisfying definition \ref{def_coarse_intersection}. Suppose we have a sequence $\{ \mathcal{R}_0 = \mathcal{M}_0, ..., \mathcal{M}_d\}$ satisfying definition \ref{def_coarse_intersection} with coarse intersection constant $C>0$ and $d$ minimal. Without loss of generality we may assume $C > M$. We induct on $i$, for $0 \le i \le h$, to show there is a constant, $D > 0$, such that given any $A \in \mathcal{M}_i$, then $A \subset N_D(R)$ for some $R \in \mathcal{R}_i$. This fact will then imply the claim, since $\mathcal{R}_0 \subset N_{D'}(R)$ (condition 3) for some $R \in \mathcal{R}_d$ and $D'>0$, only when $d \ge h$. To see this, suppose $R = W_H \in \mathcal{R}_d$ with $d<h$ and that $\mathcal{R}_0 \subset N_{D'}(W_H)$. Since $d < h$, there is some vertex $v \in \Gamma$ not contained in $H$. Furthermore, $v$ is not adjacent to some letter $u \in \Gamma$ (or else $v$ is in every wide subgraph and thus in $H$). Let $g = uv$. Given an integer $n > D'$, $W_H$ and $g^nW_H$ are distance at least $n$ apart, since every hyperplane intersecting $g^n$ at an edge labelled by $v$ cannot intersect $W_H$ or $g^nW_H$. However, this implies some cosets in $\mathcal{R}_0$ are not contained in $N_{D'}(W_H)$, a contradiction.

The base case of the induction, $i=0$, trivially holds as $\mathcal{M}_0 = \mathcal{R}_0$. Now, assume $i>0$ and the claim is true for $i-1$. Let $A \in \mathcal{M}_i$ and $\mathcal{B} \subset \mathcal{M}_{i-1}$ be the collection of pieces of $A$. Fix $B \in \mathcal{B}$. By the induction hypothesis, there is some $R \in \mathcal{R}_{i-1}$ and some constant $D' > 0$, such that $B \subset N_{D'}(R)$. Let $\hat{R} \in \mathcal{R}_i$ be the unique coset that $R$ is a member of.

Given any other piece $B' \in \mathcal{B}$ of $A$, it follows there is a sequence of pieces of $A$:
\[B = B_1, B_2, ..., B_m = B' \] 
such that $N_C(B_j) \cap B_{j+1}$ has infinite diameter. By the induction hypothesis we then get a sequence of cosets in $\mathcal{R}_{i-1}$:
\[R = R_1, R_2,...., R_m \]
such that $B_j \subset N_{D'}(R_j)$. Furthermore, $N_{(C+2D')}(R_j) \cap R_{j+1}$ has infinite diameter. By Lemma \ref{lemma_infinite_intersection}, this implies $R_m$ is also a member of $\hat{R}$. Hence, $\mathcal{B} \subset N_{D'}(\hat{R})$. Since $A \subset N_C(\mathcal{B})$, it follows that $A \subset N_{(D' + C)}(\hat{R})$. We then just set $D = D' + C$, and the induction step holds.
\end{proof}

The following corollary immediately follows from Lemma \ref{lemma_coarse_intersection_degree} and Remark \ref{rmk_coarse_intersection_of_sequence}.

\begin{corollary}
	Let $h$ be the hypergraph index of a simplicial graph $\Gamma$. The coarse intersection degree of $\mathcal{R}_i$ is $h - i$.
\end{corollary}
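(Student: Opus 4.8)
The plan is to read this off directly from Lemma~\ref{lemma_coarse_intersection_degree} together with Remark~\ref{rmk_coarse_intersection_of_sequence}; the statement is a formal consequence of those two, so the only real work is making sure the hypothesis of the remark is genuinely in hand. First I would point out that the proof of Lemma~\ref{lemma_coarse_intersection_degree} establishes more than its stated conclusion: it verifies directly that the collections $\{\mathcal{R}_0, \mathcal{R}_1, \dots, \mathcal{R}_h\}$ satisfy all three conditions of Definition~\ref{def_coarse_intersection} for $\mathcal{R}_0$, with coarse intersection constant $M$ (one more than the maximal clique size of $\Gamma$) and with the pieces of a coset $fW_F \in \mathcal{R}_j$ taken to be its members in $\mathcal{R}_{j-1}$. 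In the terminology introduced there, $\{\mathcal{R}_0, \mathcal{R}_1, \dots, \mathcal{R}_h\}$ is a coarse intersection sequence for $\mathcal{R}_0$ of length $d = h$.

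Having recorded that, I would simply apply Remark~\ref{rmk_coarse_intersection_of_sequence} verbatim with $\mathcal{M}_j = \mathcal{R}_j$ and $d = h$: along any coarse intersection sequence, the coarse intersection degree of the $i$-th term equals $d - i$. This gives that the coarse intersection degree of $\mathcal{R}_i$ is $h - i$, which is the assertion of the corollary. It is worth noting that the natural range of the index is $0 \le i \le h$: once $i = h$ the coset $W_\Gamma = \Sigma_\Gamma$ already belongs to $\mathcal{R}_i$, so the degree is $0$, consistently with $h - h = 0$, whereas for $i > h$ the degree remains $0$ rather than the (meaningless) negative quantity $h - i$.

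I do not expect any genuine obstacle here; all the substantive content — that the realizations telescope correctly, that pieces coarsely intersect in infinite-diameter path-connected sets, and that $\mathcal{R}_h$ absorbs the whole Davis complex — was already carried out in the proof of Lemma~\ref{lemma_coarse_intersection_degree}, which in turn leans on Lemma~\ref{lemma_infinite_intersection} and the combinatorics of the lambda hypergraphs. The only point one might pause on is whether the remark's computation "degree of $\mathcal{M}_i$ is $d-i$" quietly requires $\mathcal{M}_0$ to be extremal in a way $\mathcal{R}_0$ might not be; it does not, since truncating a coarse intersection sequence after its first $i$ steps yields a valid coarse intersection sequence for $\mathcal{M}_i$ of length $d - i$, and no shorter one can exist because any coarse intersection sequence for $\mathcal{M}_i$ can be extended downward to one for $\mathcal{M}_0$, forcing $d - i$ to be minimal.
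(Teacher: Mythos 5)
Your proposal is correct and follows exactly the route the paper takes: the paper derives this corollary immediately from Lemma \ref{lemma_coarse_intersection_degree} (whose proof exhibits $\{\mathcal{R}_0,\dots,\mathcal{R}_h\}$ as a coarse intersection sequence) together with Remark \ref{rmk_coarse_intersection_of_sequence}. Your additional justification of the remark via truncation and downward extension, and your observation about the range of $i$, are fine but not needed beyond what the paper already records.
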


\section{A QI-invariant for 2-dimensional right-angled Coxeter groups} \label{sec_qi}
A graph is triangle-free if it does not contain any $3$--cycles. The goal of this section is to prove the following theorem:

\begin{theorem} \label{thm_qi_invariant}
	Let $\Gamma$ and $\Gamma'$ be triangle-free graphs. If the right-angled Coxeter groups $W_{\Gamma}$ and $W_{\Gamma'}$ are quasi-isometric, then $W_{\Gamma}$ and $W_{\Gamma'}$ have the same hypergraph index.
\end{theorem}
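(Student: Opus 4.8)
The plan is to derive the theorem from two facts already available. By Lemma~\ref{lemma_coarse_intersection_degree}, when a graph $\Gamma$ has finite hypergraph index $h$, the number $h$ equals the coarse intersection degree of the realization $\mathcal{R}_0(\Gamma)$, regarded as a collection of subspaces of the Davis complex $\Sigma_\Gamma$; and by Remark~\ref{rmk_coarse_intersection_qi}, coarse intersection degree is preserved under pushing a collection of subspaces forward (and thickening it) along a quasi-isometry. First I would dispose of the infinite case: by Remark~\ref{rmk_infinite_hyp_index}, a right-angled Coxeter group has infinite hypergraph index if and only if it is relatively hyperbolic, and by \cite{BHSC} every right-angled Coxeter group is either relatively hyperbolic, with respect to a canonical collection of proper non-relatively-hyperbolic peripheral subgroups, or thick; these alternatives are mutually exclusive and, by \cite{BDM}, quasi-isometry invariant. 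Hence if one of $W_\Gamma$, $W_{\Gamma'}$ has infinite hypergraph index then so does the other, and both indices equal $\infty$. From now on assume $\Gamma$ and $\Gamma'$ have finite hypergraph indices $h$ and $h'$ respectively.

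Since $\Sigma_\Gamma \simeq_{QI} W_\Gamma \simeq_{QI} W_{\Gamma'} \simeq_{QI} \Sigma_{\Gamma'}$, fix a quasi-isometry $\phi\colon \Sigma_\Gamma \to \Sigma_{\Gamma'}$ with constant $D$ and put $\mathcal{M}' = \{\,N_D(\phi(A)) \mid A \in \mathcal{R}_0(\Gamma)\,\}$. By Lemma~\ref{lemma_coarse_intersection_degree} and Remark~\ref{rmk_coarse_intersection_qi} the collection $\mathcal{M}'$ has coarse intersection degree $h$, while by Lemma~\ref{lemma_coarse_intersection_degree} applied to $\Gamma'$ the collection $\mathcal{R}_0(\Gamma')$ has coarse intersection degree $h'$. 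Since the coarse intersection degree of a collection of subspaces depends only on its uniform Hausdorff equivalence class --- bounded Hausdorff distances are absorbed into the coarse intersection constant, as in Remark~\ref{rmk_coarse_intersection_qi} --- it therefore suffices to prove that $\mathcal{M}'$ and $\mathcal{R}_0(\Gamma')$ are uniformly Hausdorff equivalent: each element of one lies within uniformly bounded Hausdorff distance of some element of the other, and conversely. This gives $h = h'$.

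Establishing this Hausdorff equivalence is the heart of the argument and the step I expect to be the main obstacle, and it is here that triangle-freeness is used essentially. The idea is to give a quasi-isometry invariant description of the cosets in $\mathcal{R}_0(\Gamma)$, which, transported by $\phi$, must then match $\mathcal{R}_0(\Gamma')$. Because $\Gamma$ is triangle-free, every wide subgraph decomposes (Remark~\ref{rmk_wide_breakdown}) as $A' \star B' \star K$ with $A'$, $B'$ edgeless, each containing a non-adjacent pair, and $K$ a clique of size at most one; hence each coset $gW_H$ with $H \in \Omega(\Gamma)$ is quasi-isometric to a product of two unbounded virtually free groups and so has no cut point in any of its asymptotic cones, and the maximality built into $\Omega(\Gamma)$ makes these cosets maximal among such subsets. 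The plan is to prove that, up to coarse equivalence, the cosets in $\mathcal{R}_0(\Gamma)$ are precisely the maximal subsets of $\Sigma_\Gamma$ with no cut point in their asymptotic cones --- equivalently, the maximal product regions (top-dimensional quasiflats) in the $2$-dimensional $\mathrm{CAT}(0)$ cube complex $\Sigma_\Gamma$. As such a description is manifestly quasi-isometry invariant, $\phi$ then carries each coset in $\mathcal{R}_0(\Gamma)$ to within bounded Hausdorff distance of a coset in $\mathcal{R}_0(\Gamma')$, and applying the description to $\phi^{-1}$ gives the reverse direction, completing the proof.

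Concretely I would establish this description in two parts. (a) No coset in $\mathcal{R}_0(\Gamma)$ is contained in a bounded neighborhood of another: this follows from Lemma~\ref{lemma_equal_cosets} together with the maximality of wide subgraphs in $\Omega(\Gamma)$, by a hyperplane argument like the one in the proof of Lemma~\ref{lemma_equal_cosets}. (b) Every subset of $\Sigma_\Gamma$ with no cut point in its asymptotic cones is contained in a bounded neighborhood of a single coset in $\mathcal{R}_0(\Gamma)$: here I would extract from a product decomposition visible in an asymptotic cone a maximal product subcomplex of $\Sigma_\Gamma$ coarsely containing the subset, via a hyperplane-counting and disk-diagram argument in the spirit of the proof of Lemma~\ref{lemma_infinite_intersection}, using known rigidity of top-dimensional quasiflats in $\mathrm{CAT}(0)$ cube complexes (see, e.g., \cite{BHS_qflats}). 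Granting (a) and (b), the cosets in $\mathcal{R}_0(\Gamma)$ are exactly the maximal cut-point-free subsets of $\Sigma_\Gamma$ up to coarse equivalence, which is the description sought. The triangle-free hypothesis is precisely what guarantees that these maximal regions are honest products of two unbounded factors inside a two-dimensional complex, keeping the hyperplane combinatorics tractable; in higher dimensions cut-point-free subsets need not be controlled this way, consistent with the hypergraph index failing to be a quasi-isometry invariant there.
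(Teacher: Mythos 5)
Your overall skeleton --- dispose of the infinite case via relative hyperbolicity, then compare coarse intersection degrees via Lemma \ref{lemma_coarse_intersection_degree} and Remark \ref{rmk_coarse_intersection_qi} --- is exactly the paper's strategy. But the step you yourself flag as the heart of the argument contains a genuine gap, and it is precisely the point the paper is engineered to avoid. You claim that $\phi$ carries each coset $P \in \mathcal{R}_0(\Gamma)$ to within uniformly bounded Hausdorff distance of a \emph{single} coset of $\mathcal{R}_0(\Gamma')$. The paper never proves this and its structure suggests it cannot be proved this way: Lemma \ref{lemma_lambda_1_images} only shows that $\phi(P)$ is covered by neighborhoods of a \emph{family} of cosets of $\mathcal{R}_0'$, all members of one coset of $\mathcal{R}_1'$, which is why $\phi_0$ is defined with target $\mathcal{R}_1'$ rather than $\mathcal{R}_0'$. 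The bijection and Hausdorff-closeness statements (Lemmas \ref{lemma_hausdorff_close} and \ref{lemma_phi_1_bijection}) hold one level up, for $\mathcal{R}_1$, and the coarse intersection degree argument is then run there using that the degree of $\mathcal{R}_1$ is $h-1$. The available rigidity input, Theorem \ref{thm_huang}, concerns genuine flats $\mathbb{R}^2$; a coset $gW_{A \star B}$ with $A$, $B$ anticliques of size at least $3$ is a product of two unbounded trees of exponential growth, not a quasiflat, so quasiflat rigidity does not apply to it, and the passage from ``each flat in $P$ lands near a flat in \emph{some} maximal product region of $\Sigma_{\Gamma'}$'' to ``all of $P$ lands near \emph{one} such region'' is exactly what is missing.

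Your proposed intrinsic characterization in step (b) is also false as stated. If $P_1, P_2 \in \mathcal{R}_0(\Gamma)$ are distinct cosets whose coarse intersection is unbounded (say it contains a flat, which happens whenever the corresponding wide subgraphs share a pair of non-adjacent vertices in each factor), then $N_C(P_1) \cup N_C(P_2)$ has no cut point in any of its asymptotic cones: each cone is a union of two products of unbounded $\mathbb{R}$-trees glued along an unbounded subset, and removing a single point disconnects neither factor of either product. Yet by Lemma \ref{lemma_equal_cosets} and maximality this union is not within bounded Hausdorff distance of any single coset of $\mathcal{R}_0(\Gamma)$. So the cosets of $\mathcal{R}_0$ are not the maximal cut-point-free subsets, and the quasi-isometry-invariant description you want to transport does not exist in this form. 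To repair the argument you would either have to prove the Hausdorff equivalence at the level of $\mathcal{R}_1$ (as the paper does, via the well-definedness of $\phi_0$ and $\phi_1$ and a disk-diagram argument), or supply a genuinely new proof that maximal product regions themselves are preserved, which is a substantially stronger statement than anything established in the paper.
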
 

Although the hypergraph index of a relatively hyperbolic right-angled Coxeter group is always infinite, we can still define a more refined quasi-isometry invariant for these groups. Given a right-angled Coxeter group, $W_{\Gamma}$, let $\{W_{T_1}, W_{T_2}, ..., W_{T_n}\}$ be the collection of maximal special subgroups of $W_{\Gamma}$ with finite hypergraph index (i.e., if $W_T$ is a special subgroup with finite hypergraph index, then $W_T \subset W_{T_i}$ for some unique $i$). Define the \textit{hypergraph index spectrum of $W_{\Gamma}$} as the set of unique hypergraph indexes, $\mathcal{H} = \{ h_1,h_2,...,h_m \}$, of the groups $\{W_{T_1}, W_{T_2}, ..., W_{T_n}\}$. Note that when $W_{\Gamma}$ is not relatively hyperbolic, the hypergraph index spectrum and the hypergraph index coincide.

\begin{corollary} \label{cor_hypergraph_index_spectrum}
Let $\Gamma$ and $\Gamma'$ be triangle-free graphs. If the right-angled Coxeter groups $W_{\Gamma}$ and $W_{\Gamma'}$ are quasi-isometric, then $W_{\Gamma}$ and $W_{\Gamma'}$ have the same hypergraph index spectrum.
\end{corollary}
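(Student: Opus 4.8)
The plan is to reduce the statement to Theorem \ref{thm_qi_invariant} by means of the quasi-isometry rigidity of relatively hyperbolic structures. First I would dispose of the case where $W_{\Gamma}$ is not relatively hyperbolic. By \cite{BHSC}, $W_{\Gamma}$ is then thick, and thickness is a quasi-isometry invariant, so $W_{\Gamma'}$ is also thick and in particular not relatively hyperbolic. By Remark \ref{rmk_infinite_hyp_index} both graphs then have finite hypergraph index, so by the definition of the spectrum (and the remark immediately preceding Corollary \ref{cor_hypergraph_index_spectrum}) the hypergraph index spectrum of each coincides with its hypergraph index, and the conclusion is exactly Theorem \ref{thm_qi_invariant}. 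Hence from now on assume $W_{\Gamma}$, and therefore (by the same thickness argument) $W_{\Gamma'}$, is relatively hyperbolic.

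The structural input I would use is that, by \cite{BHSC}, a relatively hyperbolic $2$--dimensional right-angled Coxeter group is hyperbolic relative to a canonical collection of special subgroups, whose non-elementary members are precisely the maximal special subgroups of finite hypergraph index. Here one uses that a special subgroup has finite hypergraph index if and only if it is not relatively hyperbolic (Remark \ref{rmk_infinite_hyp_index} applied to the induced subgraph) if and only if, by \cite{BHSC}, it is thick; and that a maximal special subgroup of finite hypergraph index cannot be properly contained in one of the elementary peripheral pieces — finite special subgroups, infinite dihedral special subgroups, and products thereof — since all of these have empty $\Omega$ and hence infinite hypergraph index. Thus the hypergraph index spectrum of $W_{\Gamma}$ is exactly the set of hypergraph indexes of the thick peripheral subgroups in this canonical structure, and likewise for $W_{\Gamma'}$.

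Next I would invoke quasi-isometric rigidity of relatively hyperbolic groups (Behrstock--Dru\c{t}u--Mosher, \cite{BDM}): a quasi-isometry $W_{\Gamma}\to W_{\Gamma'}$ induces a correspondence between the peripheral subgroups of $W_{\Gamma}$ and those of $W_{\Gamma'}$ under which corresponding peripherals are quasi-isometric. Since being thick (equivalently, not relatively hyperbolic) is a quasi-isometry invariant, this correspondence restricts to a bijection between the thick peripherals of $W_{\Gamma}$ and those of $W_{\Gamma'}$, matching quasi-isometric ones. Each matched pair consists of right-angled Coxeter groups on induced — hence triangle-free — subgraphs, so Theorem \ref{thm_qi_invariant} applies and the two members of each matched pair have equal hypergraph index. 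Running over the bijection, the set of hypergraph indexes of the thick peripherals of $W_{\Gamma}$ equals that of $W_{\Gamma'}$; by the previous paragraph these sets are the two hypergraph index spectra, which proves the corollary.

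The main obstacle I anticipate is the middle step: pinning down precisely the identification between ``maximal special subgroup of finite hypergraph index'' and ``thick peripheral subgroup in the canonical relatively hyperbolic structure of \cite{BHSC}'', in particular checking that every such maximal special subgroup actually occurs as a peripheral and that the degenerate peripherals are correctly excluded. A secondary technical point is ensuring that the form of relative-hyperbolicity rigidity invoked genuinely yields a quasi-isometry between matched peripherals rather than a coarser correspondence; this holds once one restricts to the thick (unconstricted) peripherals, which is exactly the situation here.
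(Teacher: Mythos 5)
Your proposal is correct and follows essentially the same route as the paper: the paper's proof likewise combines the fact from \cite{BHSC} that a relatively hyperbolic right-angled Coxeter group is hyperbolic relative to its maximal special subgroups of finite hypergraph index with the quasi-isometric rigidity of relatively hyperbolic groups from \cite{BDM} and then applies Theorem \ref{thm_qi_invariant} to the matched peripherals. Your extra care with the non-relatively-hyperbolic case and with excluding degenerate peripherals only makes explicit what the paper leaves implicit.
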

\begin{proof}
By \cite[Theorem I]{BHSC} every relatively hyperbolic right-angled Coxeter group is relatively hyperbolic with respect to its maximal special subgroups of finite hypergraph index. By \cite[Theorem 4.8]{BDM} and Theorem \ref{thm_qi_invariant}, such a quasi-isometry induces a bijective function, taking identity values, between the hypergraph index spectrums of $W_{\Gamma}$ and $W_{\Gamma'}$.
\end{proof}

For the remainder of this section and this section only, we assume $\Gamma$ and $\Gamma'$ are triangle-free simplicial graphs associated to the right-angled Coxeter groups $W_{\Gamma}$ and $W_{\Gamma'}$ with corresponding Davis complexes $\Sigma_{\Gamma}$ and $\Sigma_{\Gamma'}$. We assume there exists a quasi-isometry:
\[\phi: W_{\Gamma} \to W_{\Gamma'} \] 

We also fix the following hypergraphs and their realizations as defined in section \ref{sec_lambda_graphs}:
\[\Lambda_i = \Lambda_i(\Gamma),~ \Lambda_i' = \Lambda_i(\Gamma'),~ \mathcal{R}_i = \mathcal{R}_i(\Gamma),~ \mathcal{R}_i' = \mathcal{R}_i(\Gamma')\]
 
A \textit{flat}, $F$, is the image of an isometric embedding of $\mathbb{R}^2$. When we consider a flat in a Davis complex, it is implied this is a flat with respect to the CAT(0) metric. The next two lemmas describe the behavior of flats in $\Sigma_{\Gamma}$. The first of which shows a flat must be contained in a coset of $\mathcal{R}_0$, while the second lemma shows cosets of $\mathcal{R}_0$ contain many flats. 

\begin{lemma} \label{lemma_flat_structures}
	Let $F$ be a flat in $\Sigma_{\Gamma}$. $F$ is contained in a coset of $W_{A \star B}$, where $A$ and $B$ are each a set of pairwise non-adjacent vertices of $\Gamma$ of size at least $2$. Hence, $F \subset P$ for some $P \in \mathcal{R}_0$.
\end{lemma}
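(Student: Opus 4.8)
The plan is to study the hyperplanes of $\Sigma_{\Gamma}$ that cross $F$ and to show they split into two transverse families whose vertex‑types furnish the desired join. First I would record the consequence of triangle‑freeness: $\Sigma_{\Gamma}$ contains no $3$--cube, so no three hyperplanes pairwise cross (three pairwise‑crossing hyperplanes would span a cube, which here forces a $3$--clique in $\Gamma$). Then I would consider the hyperplanes that actually separate a pair of points of $F$; each such $\mathcal{H}$ meets $F$ in a bi‑infinite geodesic $\mathcal{H}\cap F$, since $\mathcal{H}$ and $F$ are both convex in the CAT(0) metric. Two of these hyperplanes cross exactly when their traces in $F\cong\mathbb{R}^2$ are non‑parallel lines, and ``parallel‑or‑equal'' is an equivalence relation on lines in the plane, so by the no‑three‑pairwise‑crossing fact there are at most two parallelism classes; there are at least two, since a bi‑infinite geodesic in $F$ parallel to a single class would cross no hyperplane at all, which is impossible. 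Call the two classes $\mathcal{A}$ and $\mathcal{B}$; every member of $\mathcal{A}$ crosses every member of $\mathcal{B}$, while members of the same class are disjoint.

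Next I would pass to vertex‑types. Let $A$ and $B$ be the sets of types occurring among the hyperplanes of $\mathcal{A}$ and of $\mathcal{B}$ respectively. A hyperplane of type $s$ can cross one of type $t$ only if $s$ and $t$ are adjacent in $\Gamma$, so every vertex of $A$ is adjacent to every vertex of $B$; thus the subgraph of $\Gamma$ induced on $A\cup B$ equals the join $A\star B$. Because $\Gamma$ is triangle‑free and $B\neq\emptyset$, no two vertices of $A$ can be adjacent, and likewise no two vertices of $B$. To see $|A|\ge 2$ (and symmetrically $|B|\ge 2$): a geodesic line in $F$ parallel to the $\mathcal{B}$--class crosses only hyperplanes of $\mathcal{A}$, and a geodesic in $\Sigma_{\Gamma}$ can never cross two consecutive hyperplanes of the same type (the corresponding generator has order $2$, so this backtracks), so such a line must meet hyperplanes of at least two distinct types of $A$.

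To finish, I would fix a vertex $v$ of $\Sigma_{\Gamma}$ lying on $F$ — arranging, by a short preliminary argument (translating $F$, or replacing it by the combinatorial flat it determines), that $F$ passes through a vertex — and show $F\subset vW_{A\star B}$: for any vertex $x\in F$, every hyperplane separating $v$ from $x$ separates two points of $F$, hence has type in $A\cup B$; since the letters occurring in a reduced expression for $v^{-1}x$ are precisely the types of hyperplanes separating $v$ from $x$, we get $v^{-1}x\in W_{A\cup B}=W_{A\star B}$, which proves the first assertion. Since $A\star B$ is a join whose two sides each contain a non‑adjacent pair, it lies inside a maximal wide subgraph $L\in\Omega(\Gamma)$; then $F\subset vW_{A\star B}\subset vW_L=:P$, and $P\in\mathcal{R}_0(\Gamma)$ because $L$ is a hyperedge of $\Lambda_0(\Gamma)$ not lying in $\Psi(\Gamma)$.

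The main obstacle is the cube‑complex geometry underpinning the first paragraph together with the basepoint choice in the last one: proving that a hyperplane meeting the flat $F$ meets it in a full geodesic line (so that ``tangential'' hyperplanes contribute nothing to the separation analysis), and that $F$ may be assumed to pass through a vertex of $\Sigma_{\Gamma}$. These are standard features of flats in locally finite CAT(0) cube complexes carrying a cocompact action, but they carry the real content; once they are in hand, everything else is bookkeeping with hyperplane types and the definition of $\Omega(\Gamma)$.
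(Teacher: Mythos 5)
Your argument is correct and reaches the right conclusion, but it routes through a different mechanism than the paper. The paper's proof is local and cell-combinatorial: since $\Sigma_{\Gamma}$ is $2$--dimensional, $F$ must contain a point in the interior of some square, hence the whole square, and the flatness (total angle $2\pi$) forces exactly four squares around each vertex of $F$; iterating shows $F$ \emph{is} a subcomplex equal to a product $g_1\times g_2$ of two combinatorial bi-infinite geodesics, after which $A$ and $B$ are read off as the label sets of $g_1$ and $g_2$, with $|A|,|B|\ge 2$ because a reduced word cannot repeat a letter consecutively, commutation because each hyperplane dual to $g_1$ crosses each dual to $g_2$, and non-adjacency within $A$ and within $B$ from triangle-freeness. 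You instead organize the hyperplanes separating points of $F$ into two parallelism classes and deduce the join from crossing types; this is a legitimate and standard alternative, and your handling of the type bookkeeping and of the final step $A\star B\subset L\in\Omega(\Gamma)$, hence $F\subset vW_L\in\mathcal{R}_0$, is fine (the paper leaves that last ``hence'' implicit). The honest caveat is that the two facts you defer --- that a separating hyperplane meets $F$ in a full line, and that $F$ passes through a vertex so that the separation/reduced-word argument applies --- are not side conditions but essentially the entire content of the lemma; the paper's local argument is precisely the efficient way to discharge both at once, since once $F$ is known to be the subcomplex $g_1\times g_2$ your parallelism classes become visible as the two combinatorial factors. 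Also note that your ``cross exactly when their traces are non-parallel'' should be weakened to the one implication you actually use (non-parallel traces force crossing), since two hyperplanes could in principle cross outside $F$; as written the biconditional is unjustified, though harmless.
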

\begin{proof}
	First observe that $F$ must contain some point in the interior of a $2$--cell $C$ of $\Sigma_{\Gamma}$, and, consequently, $F$ must contain all of $C$. Since $F$ is isometric to the Euclidean plane, it follows that for each vertex, $v \in C$, $F$ contains exactly four $2$--cells adjacent at $v$ which together form a square composed of a grid of four smaller squares. 
	
	Repeating this process, we can deduce that $F$ is exactly a subcomplex of $\Sigma_{\Gamma}$ which is the product of two combinatorial bi-infinite geodesics $g_1$ and $g_2$. Let $A$ be the set of generators which appear in $g_1$ and $B$ the set of generators which appear in $g_2$. Since $g_1$ and $g_2$ are infinite, it follows $|A|, |B| \ge 2$. Furthermore, since a hyperplane dual to an edge of $g_1$ must cross every hyperplane through $g_2$, and vice versa, it follows every generator in $A$ commutes with every generator in $B$. Hence, $F \subset gW_{A \star B}$ for some $g \in W_{\Gamma}$. No pair of vertices in $A$ are adjacent since $\Gamma$ is triangle-free. Similarly, $B$ contains no pair of adjacent vertices.	
\end{proof}

\begin{remark} \label{rmk_basic_flats}
	Let $F = gW_G$ where $G$ is an embedded $4$--cycle. It follows $G = A \star B \subset \Gamma$ where $A$ and $B$ each consist of two non-adjacent vertices, say $A = \{a_1, a_2\}, B = \{b_1, b_2\}$. The Cayley graph of $W_G$ is the product $g_1 \times g_2$, where $g_1$ is the bi-infinite geodesic $...a_1a_2a_1a_2...$ and $g_2$ is the bi-infinite geodesic $...b_1b_2b_1b_2...$. As special subgroups are convex in right-angled Coxeter groups, $F$ is a flat in $W_{\Gamma}$.
\end{remark}

\begin{lemma} \label{lemma_finding_flats}
Given $P \in \mathcal{R}_0$, considered as a subset of $\Sigma_{\Gamma}$, and a vertex $p \in P$, there is an embedded $4$-cycle, $G \subset \Gamma$, and $g \in W_{\Gamma}$ such that $F = gW_G$ is a flat, $F \subset P$ and $p \in F$. 
\end{lemma}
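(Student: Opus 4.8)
The plan is to observe that a coset in $\mathcal{R}_0$ necessarily comes from a \emph{wide} subgraph, that a wide subgraph --- being a join with a non-adjacent pair in each factor --- always contains an induced $4$--cycle, and that translating the associated special subgroup so as to pass through $p$ yields the required flat.

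First I would pin down what $P$ is. By definition $\mathcal{R}_0$ consists of cosets $g_0 W_H$ where $H$ is a hyperedge of $\Lambda_0(\Gamma)$ with $H \notin \Psi(\Gamma)$; the hyperedges of $\Lambda_0(\Gamma)$ are exactly the vertex sets of the subgraphs in $\Omega(\Gamma) \cup \Psi(\Gamma)$, and an induced subgraph is determined by its vertex set. Since $\Omega(\Gamma)$ and $\Psi(\Gamma)$ are disjoint --- an element $A \star K$ of $\Psi(\Gamma)$ has $\{a_1,a_2\}$ as its only non-adjacent pair and so admits no join decomposition with a non-adjacent pair in both factors --- the induced subgraph $H$ on the vertex set of $P$ lies in $\Omega(\Gamma)$. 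So write $P = g_0 W_H$ with $H$ a wide subgraph.

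Next I extract the $4$--cycle. By Remark \ref{rmk_wide_breakdown}, write $H = A' \star B' \star K$ where $K$ is the (possibly empty) clique of vertices of $H$ adjacent to all of $H$ and where every vertex of $A'$ (resp.\ $B'$) has a non-neighbour inside $A'$ (resp.\ $B'$). Both $A'$ and $B'$ are non-empty: writing $H = A \star B$ with a non-adjacent pair in each of $A$, $B$, that pair cannot meet $K$, hence survives into $A'$ (resp.\ $B'$). Pick $a_1 \in A'$ and, by Remark \ref{rmk_wide_breakdown}, $a_2 \in A'$ not adjacent to $a_1$; likewise pick non-adjacent $b_1, b_2 \in B'$. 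As $H = A' \star (B' \star K)$, each of $a_1, a_2$ is adjacent to each of $b_1, b_2$, so the subgraph $G \subset H$ induced by $\{a_1,a_2,b_1,b_2\}$ has exactly the edges $a_1b_1, b_1a_2, a_2b_2, b_2a_1$ and the non-edges $a_1a_2, b_1b_2$; that is, $G$ is an embedded $4$--cycle contained in $H$.

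Finally I build the flat through $p$. Regarding the vertex $p$ of $\Sigma_\Gamma$ as a group element $g := p \in W_\Gamma$, set $F := pW_G$. Since $G = \{a_1,a_2\} \star \{b_1,b_2\}$ we have $W_G = W_{\{a_1,a_2\}} \times W_{\{b_1,b_2\}}$, whose Cayley graph is a product of two bi-infinite geodesics, so by convexity of special subgroups (Lemma \ref{lemma_special_subgroups}) the translate $F$ is a flat in $\Sigma_\Gamma$, exactly as in Remark \ref{rmk_basic_flats}. Plainly $p \in F$, and since $p \in P$ with $P$ a coset of $W_H$ we get $P = pW_H$, whence $F = pW_G \subset pW_H = P$. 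The whole argument is short; the only points that genuinely need care are the identification of $H$ as a wide (not strip) subgraph and the non-emptiness of the factors $A', B'$ --- everything else is immediate from the definitions together with Remarks \ref{rmk_wide_breakdown} and \ref{rmk_basic_flats}.
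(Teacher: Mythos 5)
Your proof is correct and follows essentially the same route as the paper's: identify $P$ as a coset of a wide subgraph $H=A\star B$, pick a non-adjacent pair in each join factor to get an induced $4$--cycle $G$, and translate $W_G$ by (a geodesic representative of) $p$ inside $P$, invoking Remark \ref{rmk_basic_flats}. Your extra care in checking that $\Omega(\Gamma)$ and $\Psi(\Gamma)$ are disjoint and that the factors $A'$, $B'$ of Remark \ref{rmk_wide_breakdown} are non-empty are harmless refinements of the same argument, not a different approach.
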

\begin{proof}
Let $P = W_H$ (we may pick the identity element as the coset representative without loss of generality) where $H = A \star B$. As $P \in \mathcal{R}_0$, $A$ must contain some pair of non-adjacent vertices, say $a_1$ and $a_2$. Similarly, $B$ must contain some pair of non-adjacent vertices, say $b_1$ and $b_2$. Let $G$ be the subgraph induced by $a_1, a_2, b_1$ and $b_2$. By Remark \ref{rmk_basic_flats}, $W_G \subset W_{\Gamma}$ is an embedded isometric copy of $\mathbb{R}^2$. Let $g$ be a geodesic word from the identity element to $p$ which is contained in $W_H$ (this is possible since $W_H$ is convex). It follows that $F = gW_{G}$ is a flat satisfying the desired properties.  
\end{proof}

The following theorem from \cite{Huang} is stated in terms of our given setting. It is an important ingredient in the proof of Theorem \ref{thm_qi_invariant}.

\begin{theorem}[{\cite[Corollary 5.18]{Huang}}] \label{thm_huang} There is a constant $C>0$, depending only on the quasi-isometry constants of $\phi$, such that given any flat $F \subset \Sigma_{\Gamma}$, there is a flat $F' \subset \Sigma_{\Gamma'}$ which is $C$--Hausdorff close to $\phi(F)$.
\end{theorem}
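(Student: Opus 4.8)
The plan is to reformulate the statement as a \emph{top-dimensional quasiflat rigidity} result for $\Sigma_{\Gamma'}$. Since $\phi$ is a $(K,C_0)$--quasi-isometry and $F$ is a flat, $Q := \phi(F)$ is an $(L,L)$--quasi-isometrically embedded copy of $\mathbb{R}^2$ for an $L$ depending only on $(K,C_0)$, i.e.\ a quasiflat of maximal dimension in the $2$--dimensional CAT(0) cube complex $\Sigma_{\Gamma'}$. The theorem then amounts to: every such quasiflat is $C$--Hausdorff close to a genuine flat, with $C=C(L)$. First I would reduce, via Lemma \ref{lemma_flat_structures} and Remark \ref{rmk_basic_flats}, to the case where $F = g(g_1\times g_2)$ is a \emph{combinatorial} flat containing an embedded $4$--cycle flat, and fix a $\mathbb{Z}^2$ acting cocompactly on $F$ by the translations $a_1a_2$ and $b_1b_2$; it is harmless to track $Q$ through this $\mathbb{Z}^2$--orbit, which makes $Q$ coarsely homogeneous.

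The core step is to analyze $Q$ through asymptotic cones. For any ultrafilter $\omega$ and scaling sequence $s_i\to\infty$, $\mathrm{Cone}_\omega(\Sigma_{\Gamma'},s_i)$ is a complete geodesic CAT(0) space that is $2$--dimensional and whose large-scale flat structure is constrained: every embedded flat disk lies in a maximal convex flat subset, and three distinct $2$--dimensional half-plane germs cannot meet along a common line without destroying either the dimension bound or the CAT(0) inequality. The ultralimit $Q_\omega$ of $Q$ is a bi-Lipschitz embedded $\mathbb{R}^2$. I would show $Q_\omega$ is contained in a single maximal flat $P_\omega$ of the cone: at any point where $Q_\omega$ escaped one flat, the cone would have to realize precisely the forbidden three-germ configuration, since an embedded plane has no local separating line across which it ``bends'' into a third direction. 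Once $Q_\omega\subset P_\omega$, invariance of domain forces $Q_\omega = P_\omega$, so $Q_\omega$ is a flat in \emph{every} asymptotic cone. A standard contradiction-of-scales argument (taking a hypothetical sequence of quasiflats at scales witnessing failure and passing to a cone) then promotes this to a uniform statement: there is $C=C(L)$ and, for each large radius $R$, a combinatorial flat $F_R'\subset\Sigma_{\Gamma'}$ with $Q\cap B_R$ within Hausdorff distance $CR$ of $F_R'$. Because combinatorial flats in $\Sigma_{\Gamma'}$ are cosets $g'W_{A'\star B'}$ determined by finitely many choices, the $F_R'$ stabilize to a single flat $F'$, and $Q$ and $F'$ are genuinely $C$--Hausdorff close. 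Alternatively, one can run a more combinatorial version of this step (as in \cite{Huang}), controlling directly how $Q$ interlaces with the hyperplane/wall structure of $\Sigma_{\Gamma'}$, which is the route that avoids cones.

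The remaining point, that $Q$ is close to a \emph{single} flat rather than to a finite union of flat sectors glued along geodesics (which is what the raw structure theory outputs), is comparatively soft and uses the coarse topology of the source: removing a large metric ball from $F\cong\mathbb{R}^2$ leaves one connected ``annular'' complementary region, hence the same holds for $Q$ up to uniform constants, whereas a $k$--sheeted book with $k\ge 3$ fails this property; so the structure-theorem output collapses to one flat.

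I expect the main obstacle to be exactly the quasiflat rigidity input — controlling bi-Lipschitz planes in asymptotic cones of $2$--dimensional CAT(0) cube complexes (equivalently, controlling the interaction of $Q$ with the walls of $\Sigma_{\Gamma'}$) and extracting a constant depending only on $L$. This is the technical heart of the statement and is precisely where we appeal to Huang's analysis; the reduction to combinatorial flats and the collapse from ``union of sectors'' to ``single flat'' are routine by comparison.
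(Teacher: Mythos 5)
There is no internal proof to compare against: in the paper this statement is imported verbatim as a citation of Huang (Corollary 5.18), and the text never argues it. Your proposal, stripped of its outline, does the same thing --- you yourself say the ``technical heart'' (controlling top-dimensional quasiflats, with constants depending only on the quasi-isometry constants) ``is precisely where we appeal to Huang's analysis.'' So as a solution to the stated problem your write-up is essentially the citation the paper makes, wrapped in a sketch of the standard quasiflat-rigidity route; that is acceptable, but you should not present the cone argument as if it closed the gap.

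Moreover, if the sketch were meant to stand on its own, one step is genuinely wrong as stated: the claim that in an asymptotic cone of a $2$--dimensional CAT(0) cube complex ``three distinct $2$--dimensional half-plane germs cannot meet along a common line without destroying either the dimension bound or the CAT(0) inequality.'' This configuration is ubiquitous: already in a product of two trees (or in cones of the Davis complexes at hand, e.g.\ $\Sigma_{A\star B}$ with $A$, $B$ pairs of non-adjacent vertices plus branching directions) a branch point in one factor times a line in the other gives three flat half-planes sharing a boundary line, with no violation of dimension or of CAT(0). The correct argument is not that such branching is forbidden in the cone, but that the bi-Lipschitz embedded plane $Q_\omega$ cannot cross the branching locus, which requires a separation/local-homology (or support-set) argument --- and even granting that, the passage from ``flat in every cone'' to a \emph{uniform} constant $C(L)$, and the claim that the approximating flats $F_R'$ ``stabilize'' as $R\to\infty$, are exactly the quantitative content of Huang's theorem and cannot be dismissed as routine. (Your final step, collapsing a union of flat sectors to a single flat using the coarse topology of the source plane, is fine.) In short: the approach is the right one in spirit, but its load-bearing steps are either incorrect as phrased or are the cited result itself, so the proposal does not constitute an independent proof --- which is consistent with the paper, since the paper does not offer one either.
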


\begin{definition} \label{def_phi_0}
Define the map 
\[ \phi_0: \mathcal{R}_0 \to \mathcal{R}_1' \]
as follows. Given, $P \in \mathcal{R}_0$, let $F$ be a flat contained in $P$ ($F$ exists by Lemma \ref{lemma_finding_flats}). Let $F'$ be some flat which is Hausdorff close to $\phi(F)$ (such a flat always exists by Theorem \ref{thm_huang}). By Lemma \ref{lemma_flat_structures} there is some $P' \in \mathcal{R}_0'$ which contains $F'$. Furthermore, there is a unique $Z' \in \mathcal{R}_1'$ of which $P'$ is a member of (uniqueness follows by Lemma \ref{lemma_unique_membership}).
\end{definition}

For the remainder of the section, fix the map $\phi_0$ as above. The following lemma shows, amongst other facts, that $\phi_0$ is well-defined.

\begin{lemma} \label{lemma_lambda_1_images}
	$\phi_0$ is well-defined. Furthermore, the following is true. Let $D = \max(C, M)$ where $C$ is the constant in Theorem \ref{thm_huang} and $M$ is one larger than the maximal clique size of $\Gamma$. Given a coset $P \in \mathcal{R}_0$, there is a collection of cosets $\mathcal{A} = \{ A_{\alpha} \}_{ \alpha \in \mathcal{I} } \in \mathcal{R}_0'$ such that 
	
	\begin{enumerate}
		\item $\phi(P) \subset \cup_{ \alpha \in \mathcal{I} } N_D(A_{\alpha})$ 
		\item Every $A \in \mathcal{A}$ is a member of  $Z' = \phi_0(P) \in \mathcal{R}_1'$.
		\item For each $A, B \in \mathcal{A}$, there is a sequence $A = A_1, A_2, ..., A_n = B$ such that $n \le 10$, $A_i \in \mathcal{A}$ for each $1 \le i \le n$ and $N_D(A_i) \cap A_{i+1}$ has infinite diameter for each $i <n$.
	\end{enumerate}
\end{lemma}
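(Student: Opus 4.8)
The plan is to prove all three statements simultaneously by exploiting the fact that cosets in $\mathcal{R}_0$ are full of flats (Lemma \ref{lemma_finding_flats}) and that $\phi$ sends flats uniformly close to flats (Theorem \ref{thm_huang}). First I would address well-definedness. Given $P\in\mathcal{R}_0$, suppose $F_1,F_2\subset P$ are two flats used in two applications of Definition \ref{def_phi_0}, yielding $P_1',P_2'\in\mathcal{R}_0'$ with members $Z_1',Z_2'\in\mathcal{R}_1'$. Using Lemma \ref{lemma_finding_flats} I can connect $F_1$ to $F_2$ by a short chain of $4$-cycle flats inside $P$: pick the defining non-adjacent pairs $A=\{a_1,a_2\}$, $B=\{b_1,b_2\}$ of $P=gW_{A\star B}$ (after stripping the clique part), and observe that any flat in $P$ shares an infinite-diameter coset of $W_{\{a_i,b_j\}}$-type subgroup — more carefully, since $P$'s defining graph contains the $4$-cycle on $a_1,a_2,b_1,b_2$ and every flat in $P$ lies in a coset of $W_{A'\star B'}$ with $A'\subset A$, $B'\subset B$ pairwise-non-adjacent, consecutive flats in a suitable enumeration coarsely intersect in an infinite-diameter set. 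Their $\phi$-images are then flats that coarsely intersect in infinite diameter, so the approximating flats $F_1',F_2'$ (via Theorem \ref{thm_huang}) also coarsely intersect in an infinite-diameter set; by Lemma \ref{lemma_flat_structures} they lie in cosets of $\mathcal{R}_0'$ whose defining subgraphs share a pair of non-adjacent vertices, hence both are members of the same hyperedge of $\Lambda_1'$ by Lemma \ref{lemma_infinite_intersection}(3) (or directly Lemma \ref{lemma_unique_membership}). This forces $Z_1'=Z_2'$, so $\phi_0$ is well-defined.

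For the main statement, write $P=gW_{A\star B}$ and cover $P$ by its sub-$4$-cycle-flats: by Lemma \ref{lemma_finding_flats} every vertex $p\in P$ lies in some flat $F_p=g_pW_{G_p}\subset P$ with $G_p$ an embedded $4$-cycle. Set $\mathcal{A}$ to be, for each such flat $F_p$, a coset $A_p\in\mathcal{R}_0'$ that contains a flat $F_p'$ which is $C$-Hausdorff close to $\phi(F_p)$ (using Theorem \ref{thm_huang} and Lemma \ref{lemma_flat_structures}). Then $\phi(p)\in\phi(F_p)\subset N_C(F_p')\subset N_C(A_p)\subset N_D(A_p)$, giving statement (1). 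For statement (2), each $F_p'\subset A_p$ is a flat, hence $A_p$ is a member of the unique $Z'\in\mathcal{R}_1'$ determined by the non-adjacent pairs of $G_p$'s $\phi$-image-flat; the well-definedness argument already shows this $Z'$ is independent of $p$ and equals $\phi_0(P)$, so every $A\in\mathcal{A}$ is a member of $\phi_0(P)$.

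Statement (3) — the uniform bound $n\le 10$ on chain length — is the main obstacle and where the real work lies. The strategy: given $A,B\in\mathcal{A}$ arising from flats $F=g_1W_{G_1}$, $F'=g_2W_{G_2}\subset P$ with $G_1,G_2$ $4$-cycles inside $A\star B$, I want a \emph{short} chain of $4$-cycle-flats in $P$ from $F$ to $F'$ with consecutive flats coarsely intersecting in infinite diameter, then push forward. Since $A\star B$ decomposes (Remark \ref{rmk_wide_breakdown}) and the $4$-cycles live on pairs $\{a,a'\}\subset A'$, $\{b,b'\}\subset B'$, I can first translate so coset representatives agree, then interpolate: change the $A$-coordinate pair, then the $B$-coordinate pair, then slide the coset representative one generator at a time — but that last step is not bounded, so instead I use that $W_{A'\star B'}$ acts cocompactly and any two of its cosets-of-$4$-cycle-subgroups through a common coset of $W_{A'\star B'}$ are connected by a bounded chain depending only on the (bounded) complexity of the graph $A'\star B'$, \emph{not} on the word length. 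Concretely: $F$ and $F'$ both lie in the single coset $g_1W_{A'\star B'}=g_2W_{A'\star B'}$ (they are sub-flats of the same $P$; arrange representatives so this holds), and within one coset of $W_{A'\star B'}$ any two standard $4$-cycle-flats are connected by a chain of length bounded by a small absolute constant (swap one generator of the $A$-pair, then one of the $B$-pair, using that $W_{A'\star B'}$ contains a flat on any admissible pair and these pairwise intersect in bi-infinite lines). Applying $\phi$ and Theorem \ref{thm_huang} to each flat in this chain, consecutive flats still coarsely intersect in infinite diameter (quasi-isometries coarsely preserve infinite-diameter coarse intersections of flats), and Lemma \ref{lemma_flat_structures} places each in a coset of $\mathcal{R}_0'$; Lemma \ref{lemma_infinite_intersection}(1)–(2) then upgrades "$N_D$-neighborhood meets in infinite diameter" appropriately, yielding the chain in $\mathcal{A}$. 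The constant $10$ should come from: a bounded number of coordinate swaps ($\le 4$ or so), the insertion of the intermediate cosets $L_3$ produced by Lemma \ref{lemma_infinite_intersection}(2) between consecutive terms (at most doubling the length), plus a small slack. I would track these constants explicitly only at the end; the conceptual content is that the chain length depends solely on $\Gamma$'s bounded combinatorics and Theorem \ref{thm_huang}'s uniform constant, never on how far apart $F$ and $F'$ sit inside $P$.
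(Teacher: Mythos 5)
Your overall strategy (cover $P$ by flats, push them through Theorem \ref{thm_huang}, use Lemma \ref{lemma_flat_structures} and Lemma \ref{lemma_infinite_intersection} to land in a single coset of $\mathcal{R}_1'$) is the same as the paper's, and your treatment of claims (1) and (2) and of the independence of the choice of target flat is essentially correct. The gap is in claim (3), exactly at the step you flag and then try to patch: the assertion that within one coset of $W_{A'\star B'}$ any two standard $4$--cycle flats $g_1W_{G_1}$ and $g_2W_{G_2}$ are connected by a chain of \emph{uniformly} bounded length with consecutive flats coarsely intersecting in infinite diameter. This is false. Take $A'=\{a_1,a_2\}$ and $B'=\{b_1,b_2,b_3\}$ pairwise non-adjacent, so $W_{A'\star B'}\cong D_\infty\times(\mathbb{Z}_2*\mathbb{Z}_2*\mathbb{Z}_2)$, and compare $W_{\{a_1,a_2\}\star\{b_1,b_2\}}$ with $(b_1b_2b_3)^kW_{\{a_1,a_2\}\star\{b_1,b_2\}}$. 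Two $4$--cycle flats here coarsely intersect in an infinite-diameter set only if their projections to the tree factor $W_{B'}$ (which are cosets $hW_{\{b_i,b_j\}}$, i.e.\ bi-infinite lines using only two of the three letters) come uniformly close to one another. The element $(b_1b_2b_3)^k$ lies at distance roughly $3k$ from the line $W_{\{b_1,b_2\}}$, and any single line $hW_{\{b_i,b_j\}}$ can track the path $b_1b_2b_3b_1b_2b_3\cdots$ for at most two edges before diverging, so any such chain must have length growing linearly in $k$. Swapping which generators appear in the $4$--cycle does not help; the obstruction is the distance between the coset representatives, not the combinatorics of the pairs.

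The paper's fix is to abandon standard $4$--cycle flats at this step and use general flats $f_1\times f_2$, where $f_i$ is an \emph{arbitrary} bi-infinite geodesic in $W_{T_i}$ (for $P=gW_{T_1\star T_2}$); note that Lemma \ref{lemma_flat_structures} and Theorem \ref{thm_huang} apply to arbitrary flats, and $\mathcal{A}$ is defined using all flats in $P$, not only the $4$--cycle ones. Since $W_{T_i}$ has the geodesic extension property, given two flats $F=f_1\times f_2$ and $\hat F=g_1\times g_2$ one chooses a bi-infinite geodesic $h_i$ in $W_{T_i}$ meeting both $f_i$ and $g_i$, and the five flats $f_1\times f_2,\ h_1\times f_2,\ g_1\times f_2,\ g_1\times h_2,\ g_1\times g_2$ form a chain with consecutive genuine intersections of infinite diameter, independently of how far apart $F$ and $\hat F$ sit in $P$. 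Pushing this chain forward and interpolating with Lemma \ref{lemma_infinite_intersection}(2) yields the sequence of at most $10$ cosets. You would need to replace your coordinate-swap argument with something of this kind for claim (3) to hold.
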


\begin{proof}
	Let $P = gW_{H} \in \mathcal{R}_0$ where $H = T_1 \star T_2 \in \Omega(\Gamma)$. Note that $W_{T_i}$ is infinite. By Lemma \ref{lemma_finding_flats}, choose a flat $F = f_1 \times f_2$ contained in $P$ where $f_i$ is a bi-infinite geodesic. Since $\Gamma$ is triangle-free, it follows that, with possibly reordering indices, $f_i \subset W_{T_i}$.
	
	We first show that $\phi_0(P)$ does not depend on the choice of a flat in $\Sigma_{\Gamma'}$. Let $F'$ and $F''$ be flats in $\Sigma_{\Gamma'}$, each Hausdorff close to $\phi(F)$. It follows that $F'$ is Hausdorff close to $F''$. By Lemma \ref{lemma_unique_membership}, there are cosets $P', P'' \in \mathcal{R}_0'$ which respectively contain $F'$ and $F''$. Hence, some finite neighborhood of $P'$ intersects $P''$ in an infinite diameter set. By Lemma \ref{lemma_infinite_intersection}(3), $P'$ and $P''$ are members of an unique coset $Z' \in \mathcal{R}_1'$ (uniqueness follows by Lemma \ref{lemma_unique_membership}).
	
	We next show that $\phi_0$ does not depend on the choice of flat $F \subset P$. Additionally, the work done in proving this sets up the proof for the other claims of the lemma. Let $P$ and $F$ be as before, and let $\hat{F} = g_1 \times g_2$ be another flat in $P$ with $g_i$ a bi-infinite geodesic in $W_{T_i}$.
	
	There is a bi-infinite geodesic $h_1$ in $W_{T_1}$ which intersects both $f_1$ and $g_1$ (one can readily check $W_{T_1}$ has the geodesic extension property). Similarly, there is a bi-infinite geodesic $h_2$ in $W_{T_2}$ which intersects both $f_2$ and $g_2$. Consider now the following series of flats: 
	\[F  = F_1 =  f_1 \times f_2, ~ F_2 = h_1 \times f_2, ~ F_3 = g_1 \times f_2 \]
	\[F_4 = g_1 \times h_2, ~ F_5 = g_1 \times g_2  = \hat{F} \]	
	$F_i \cap F_{i+1}$ has infinite diameter for $1 \le i < 5$. By Theorem \ref{thm_huang}, there is a constant $D>0$ depending only on $\phi$ so that $\phi(F_i) \subset N_D(F_i')$ for flats $F_1',...F_5'$ in $\Sigma_{\Gamma'}$. Furthermore, by Lemma \ref{lemma_flat_structures}, for each $i$, $F_i' \subset P_i'$ for some $P_i' \in \mathcal{R}_0'$. By Lemma \ref{lemma_infinite_intersection}(2), we may form the following sequence of cosets in $\mathcal{R}_0$:
	\[S_1 = P_1',~ S_2 = P_1'',~ S_3 = P_2',~ S_4 = P_2'',~...~,S_{7}= P_4',~ S_8 = P_4'',~ S_{9} = P_5' \]
	such that $N_D(S_j) \cap (S_{j+1})$ is infinite for each $j$, $1 \le j < 10$. Furthermore, by Lemma \ref{lemma_unique_membership} there is a unique $Z' \in \mathcal{R}_i'$ such that $S_j$ is a member of $Z'$ for each $j$, $1 \le j \le 10$. Hence, $\phi_0$ is well-defined.
	
	Given $P \in \mathcal{R}_0$, define $\mathcal{A}$ as follows:	
	\[\mathcal{A} = \{P' \in \mathcal{R}_0' ~ | ~ \text{there exists a flat, } F' \subset P', \text{Hausdorff close to } \phi(F) \text{ for some flat } F \subset P \}\]
	By Lemma \ref{lemma_finding_flats}, every $p \in P$ is contained in a flat, so claim 1 of the lemma follows.  By the work done above, $\mathcal{A}$ satisfies claims 2 and 3 of the lemma.
\end{proof}

We now extend $\phi_0$ to a new map, $\phi_1$. 

\begin{definition}
	Define the map
	\[ \phi_1: \mathcal{R}_1 \to \mathcal{R}_1' \]
	as follows. Given $Z \in \mathcal{R}_1$, choose $P \in \mathcal{R}_0$ such that $P$ is a member of $Z$. Set $\phi_1(Z) = \phi_0(P)$.
\end{definition}

\begin{lemma} \label{lemma_well_defined}
	$\phi_1$ is well defined.
\end{lemma}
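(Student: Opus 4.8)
The plan is to show that the value $\phi_1(Z)$ does not depend on the choice of member $P \in \mathcal{R}_0$ of $Z$. So suppose $P, Q \in \mathcal{R}_0$ are both members of the same hyperedge $Z \in \mathcal{R}_1$; I must show $\phi_0(P) = \phi_0(Q)$. First I would reduce to the combinatorial structure underlying the definition of $\Lambda_1$: since $P$ and $Q$ are members of a common coset $Z \in \mathcal{R}_1$, their defining hyperedges $H_P, H_Q \in \mathcal{E}(\Lambda_0)$ lie in a common $\equiv_0$-equivalence class, so after translating by a suitable group element (using the remark following the membership definition for realization cosets, which lets us pick a common coset representative) there is a chain of hyperedges $H_P = H^{(0)}, H^{(1)}, \dots, H^{(k)} = H_Q$ in $\mathcal{E}(\Lambda_0)$ with consecutive terms sharing a pair of non-adjacent vertices, and correspondingly a chain of cosets $P = P_0, P_1, \dots, P_k = Q$ in $\mathcal{R}_0$ (all members of $Z$) such that $P_j \cap P_{j+1}$ contains (a translate of) $W_{\{s,t\}}$ for some non-adjacent pair $s,t$, hence has infinite diameter. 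Note each $H^{(j)}$ must actually lie in $\Omega(\Gamma)$ rather than $\Psi(\Gamma)$ for it to appear as a realization coset; since it contains non-adjacent vertices on both sides of the chain, and such an infinite-diameter intersection forces a $4$-cycle as in the proof of Lemma \ref{lemma_coarse_intersection_degree}, this is automatic.

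By induction on $k$ it suffices to treat the case $k=1$, i.e.\ $P \cap Q$ has infinite diameter. In this case I would produce a single flat witnessing that $\phi_0(P)$ and $\phi_0(Q)$ agree. Since $P \cap Q$ has infinite diameter and $W_{H_P}$, $W_{H_Q}$ are convex, $P \cap Q$ contains a bi-infinite geodesic $g$ whose letters include a non-adjacent pair $a_1, a_2$ (by Tits' solution to the word problem, exactly as argued in the proof of claim 3 of Lemma \ref{lemma_infinite_intersection}). Now $a_1, a_2 \in H_P$, and since $H_P \in \Omega(\Gamma)$ decomposes as $A \star B$ with both sides containing non-adjacent pairs, $a_1,a_2$ lie on one side, say $A$; pick a non-adjacent pair $b_1, b_2$ on the other side $B$. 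Then $G = \{a_1,a_2\}\star\{b_1,b_2\}$ is an embedded $4$-cycle and, for an appropriate $g \in W_\Gamma$, $F = gW_G$ is a flat contained in $P$ by Lemma \ref{lemma_finding_flats}. The key point is that $F$ is \emph{also} contained in $Q$: the same non-adjacent pair $a_1,a_2$ lies in $H_Q$, and I can instead run the argument so that the $4$-cycle is built from $a_1, a_2$ together with a non-adjacent pair from the complementary side of the join decomposition of $H_Q$ — but to get a single flat in \emph{both}, I would choose $b_1, b_2$ to be a non-adjacent pair in $H_P \cap H_Q$ on a side complementary to $\{a_1,a_2\}$; such a pair exists because $H_P \cap H_Q$ contains non-adjacent vertices disjoint from $\{a_1, a_2\}$ after enlarging the geodesic $g$ suitably, or alternatively one can take two flats $F_P \subset P$, $F_Q \subset Q$ that are Hausdorff close (both lying within bounded distance of $g$) and invoke the well-definedness established in Lemma \ref{lemma_lambda_1_images}. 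Computing $\phi_0$ using such a flat (resp.\ such a Hausdorff-close pair of flats) then gives the same coset $Z' \in \mathcal{R}_1'$ for both $P$ and $Q$, since by Theorem \ref{thm_huang} their $\phi$-images are Hausdorff close and Lemma \ref{lemma_infinite_intersection}(3) together with Lemma \ref{lemma_unique_membership} forces the containing $\mathcal{R}_0'$-cosets to be members of a unique common coset of $\mathcal{R}_1'$.

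The main obstacle I anticipate is the final step of locating a \emph{common} flat (or a Hausdorff-close pair of flats) sitting inside both $P$ and $Q$: one must be careful that the non-adjacent pair coming from the shared geodesic $g$ genuinely extends, within the intersection, to a full $4$-cycle, and that the join decompositions of $H_P$ and $H_Q$ are compatible enough to place $b_1, b_2$ in the intersection. The cleanest route is probably to bypass this entirely: produce \emph{any} flat $F_P \subset P$ and \emph{any} flat $F_Q \subset Q$ each lying in a uniformly bounded neighborhood of the common geodesic $g \subset P \cap Q$ (possible by Lemma \ref{lemma_finding_flats} applied at a point of $g$, since the resulting $4$-cycle flat through that point stays within bounded distance of $g$), conclude $F_P$ and $F_Q$ are Hausdorff close, hence $\phi(F_P)$ and $\phi(F_Q)$ are Hausdorff close, hence the $\mathcal{R}_0'$-cosets containing flats Hausdorff-close to them are members of a common $\mathcal{R}_1'$-coset by Lemma \ref{lemma_infinite_intersection}(3) and Lemma \ref{lemma_unique_membership} — which is exactly $\phi_0(P) = \phi_0(Q)$. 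Chaining this over the $\equiv_0$-sequence $P = P_0, \dots, P_k = Q$ completes the proof.
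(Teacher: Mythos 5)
There is a genuine gap, and it sits exactly at the point you flag as the "main obstacle." Your fallback route asserts that a flat produced by Lemma \ref{lemma_finding_flats} through a point of the common geodesic $g$ "stays within bounded distance of $g$," so that $F_P$ and $F_Q$ are Hausdorff close. This is false: a flat is a $2$--dimensional object and leaves every bounded neighborhood of a geodesic. Concretely, if $H_P=\{a_1,a_2\}\star B_P$ and $H_Q=\{a_1,a_2\}\star B_Q$ with $B_P\cap B_Q=\emptyset$, then $F_P$ and $F_Q$ share only the line in the $a_1a_2$--direction and diverge linearly from one another in the complementary directions. The same example kills your first route: $H_P\cap H_Q$ may consist of exactly the pair $\{a_1,a_2\}$, so $P\cap Q$ is a single line containing no flat, and no common $4$--cycle can be extracted no matter how you "enlarge" $g$. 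A secondary issue: because strip subgraphs are excluded from the realization, the chain coming from the $\equiv_0$--class only gives that $N_M(P_j)\cap P_{j+1}$ has infinite diameter, not that $P_j\cap P_{j+1}$ does; so even the common geodesic $g$ you start from need not exist literally (this part is repairable via Lemma \ref{lemma_infinite_intersection}(1), but it further undercuts the flat construction).

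The reason the actual argument is harder is that nothing forces any flat of $P$ to be close to any flat of $Q$; one must instead transfer the coarse intersection to the target. The paper's proof covers $\phi(P)$ and $\phi(Q)$ by the collections $\mathcal{A}\subset\mathcal{R}_0'$ and $\mathcal{B}\subset\mathcal{R}_0'$ furnished by Lemma \ref{lemma_lambda_1_images} (members of $Y'=\phi_0(P)$ and $Z'=\phi_0(Q)$ respectively), notes that $\phi(N_M(P)\cap Q)$ has infinite diameter, picks two points of $\bigl(\bigcup\mathcal{A}\bigr)\cap N_C\bigl(\bigcup\mathcal{B}\bigr)$ very far apart, and runs a pigeonhole and disk-diagram argument in $\Sigma_{\Gamma'}$ to produce a Euclidean rectangle of width $M'$ with one side in some $A_j\in\mathcal{A}$ and the opposite side in some $B_k\in\mathcal{B}$. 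By the argument of Lemma \ref{lemma_infinite_intersection}, $A_j$ and $B_k$ are then members of a common coset of $\mathcal{R}_1'$, whence $Y'=Z'$. You would need to replace your flat-matching step with an argument of this kind; as written, the proposal does not establish the lemma.
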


\begin{proof}
	Let $Z \in \mathcal{R}_1$ and let $P, Q \in \mathcal{R}_0$ be members of $Z$. We will show that $\phi_0(P) = \phi_0(Q)$. 
		
	Since $P$ and $Q$ are both members of $Z$, by Lemma \ref{lemma_coarse_intersection_degree}, there is an finite sequence of elements in $\mathcal{R}_0$ starting with $P$ and ending with $Q$ so that the $M$ neighborhood of an element of this sequence has infinite diameter intersection with the next element of the sequence. Therefore, it is enough to show the claim for the case when $N_M(P) \cap Q$ has infinite diameter.
	
	Suppose $\phi_0(P) = Y'$ and $\phi_0(Q) = Z'$, for $Y', Z' \in \mathcal{R}_1'$. Let $\mathcal{A} = \{A_{\alpha}\}_{\alpha \in \mathcal{I}}$ and $\mathcal{B} = \{B_{\alpha}\}_{\alpha \in \mathcal{J}}$ be members respectively of $Y'$ and $Z'$, as in Lemma \ref{lemma_lambda_1_images}, so that $\phi(P) \subset \cup_{\alpha \in \mathcal{I}}N_D(A_{\alpha})$ and $\phi(Q) \subset \cup_{\alpha \in \mathcal{J}}N_D(B_{\alpha})$.  Let $A = \cup_{\alpha \in \mathcal{I}}{A_\alpha}$ and $B = \cup_{\alpha \in \mathcal{J}}{B_\alpha}$. Since $N_M(P) \cap Q$ has infinite diameter, there is a constant $C$ so that $A \cap N_{C}(B)$ has infinite diameter.
	
	Let $x, y$ be points in $A \cap N_{C}(B)$ distance $R$ apart from one another for $R$ large enough (to be later determined). Let $x'$ and $y'$ be points in $B$ distance at most $C$ from $x$ and $y$ respectively. Let $f_1$ be a geodesic from $x$ to $x'$ and $f_2$ a geodesic from $y$ to $y'$.
	
	As in Lemma \ref{lemma_lambda_1_images}, let $A_1,A_2...,A_n$ be a sequence of cosets in $\mathcal{A}$ such that $x \in A_1$, $y \in A_n$, $N_D(A_i) \cap A_{i+1}$ has infinite diameter and $n \le 10$. Let $a_1,...,a_{2n}$ be a sequence of geodesics such that $a_i \in A_i$ for $i$ odd and $|a_i| \le D$ for $i$ even, and such that the concatenation of geodesics $a = a_1 * ... * a_{2n}$ is a path from $x$ to $y$. Let $\hat{a} = \hat{a}_1 * \hat{a}_2 * ... * \hat{a}_{2n}$ be the geodesic obtained by deleting generators in the given expression of $a$ (this is possible by the deletion criteria of Coxeter groups, see for instance \cite[Theorem 1.5.1]{BB}). Define a sequence of elements in $\mathcal{B}$, $\{B_1, B_2,...,B_m \}$, and the geodesic $\hat{b} = \hat{b}_1*...*\hat{b}_{2m}$ similarly.
	
	Consider the disk diagram $\mathcal{D}$ with boundary path $f_1*\hat{b}*f_2^{-1}\hat{a}^{-1}$, and let $M'$ be a constant one larger than the maximal clique size of $\Gamma'$. Assuming $R$ was chosen sufficiently large it follows that $M'$ dual curves to $\hat{a}_j$ intersect $\hat{b}_k$, for some odd integers $j$ and $k$. Let $\mathcal{D}'$ be a minimal subdiagram of $\mathcal{D}$ which contains each of these dual curves. Let $p'$ denote the boundary path of $\mathcal{D}'$, and let $a'$ and $b'$ respectively be the subpaths of $\hat{a}_j$ and $\hat{b}_k$ which are contained in $p'$. 
	
	By applying \cite[Lemma 2.6]{Wise} twice, we can find another disk diagram $\mathcal{D}''$, with boundary path the same as that of $\mathcal{D}'$, except that $a'$ and $b'$ are replaced respectively with geodesics $a''$ and $b''$. Furthermore, these choices can be made so that no two dual curves to $a''$ intersect one another in $\mathcal{D}''$, no two dual curves to $b''$ intersect one another, and $M'$ curves dual to $a''$ still intersect $b''$. By Tit's solution to the word problem applied to right-angled Coxeter groups (see for instance \cite{Dav}, Chapter 3.4), the words $a''$ and $b''$ are just a reordering of the generators in the given expression for respectively $a'$ and $b'$. Hence the image of $a''$ and $b''$ in $\Sigma_{\Gamma'}$ are contained respectively in $A_j$ and $B_k$. 		  
	
	It follows the image of $\mathcal{D}''$ in $\Sigma_{\Gamma'}$ contains an Euclidean rectangle with a side of length $M'$ contained in $A_j$ and the opposite side contained in $B_k$. By the same argument as used in Lemma \ref{lemma_infinite_intersection}, it follows that $A_j$ and $B_k$ are members of the same coset of $\mathcal{R}_1'$. Hence, $Y' = Z'$, and this proves the claim. 
\end{proof}

\begin{lemma} \label{lemma_hausdorff_close}
	There is a constant $C$, only depending on the quasi-isometry constants of $\phi$, such that given $Z \in \mathcal{R}_1$, $\phi(Z)$ is $C$--Hausdorff close to $Z' = \phi_1(Z) \in \mathcal{R}_1'$. 
\end{lemma}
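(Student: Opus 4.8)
The plan is to establish the Hausdorff closeness in two directions using the structure already developed in Lemma \ref{lemma_lambda_1_images}. The key object is the collection $\mathcal{A} = \{A_\alpha\}_{\alpha \in \mathcal{I}} \subset \mathcal{R}_0'$ associated to a member $P \in \mathcal{R}_0$ of $Z$: by part (1) of that lemma, $\phi(P) \subset \bigcup_\alpha N_D(A_\alpha)$, and by part (2), every $A_\alpha$ is a member of $Z' = \phi_0(P) = \phi_1(Z)$. Since each $A_\alpha$ is a subset of $W_{Z'}$-coset $Z'$, we immediately get $\phi(P) \subset N_D(Z')$. The difficulty is twofold: first, $Z$ is a union of infinitely many members $P \in \mathcal{R}_0$, so we must control all of them at once with a uniform constant; second, we need the reverse inclusion $Z' \subset N_C(\phi(Z))$, i.e. coarse surjectivity of $\phi$ restricted to $Z$.

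\textbf{First,} for the inclusion $\phi(Z) \subset N_C(Z')$: every point $z \in Z$ lies in some member $P \in \mathcal{R}_0$ of $Z$ (or on a strip coset, which by the argument in Lemma \ref{lemma_coarse_intersection_degree} lies in the $M$-neighborhood of such a $P$). By Lemma \ref{lemma_well_defined}, $\phi_0(P) = \phi_1(Z) = Z'$ is independent of the choice of member $P$. Hence $\phi(z) \in \phi(P) \subset \bigcup_\alpha N_D(A_\alpha) \subset N_D(Z')$, since each $A_\alpha$ is a member of $Z'$ and therefore a subset of the coset $Z'$. This gives $\phi(Z) \subset N_{D+KM+C_\phi}(Z')$ where $K, C_\phi$ are the quasi-isometry constants of $\phi$; the point is that $D$ from Lemma \ref{lemma_lambda_1_images} depends only on $\phi$ (and $\Gamma$), so the bound is uniform over all $z \in Z$ and all $Z \in \mathcal{R}_1$.

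\textbf{For the reverse inclusion,} $Z' \subset N_C(\phi(Z))$: I would pick a point $z' \in Z'$, find a member $A_\alpha \in \mathcal{R}_0'$ of $Z'$ coming from the collection $\mathcal{A}$ associated to some fixed member $P$ of $Z$ that passes near $z'$ — using the connectivity in Lemma \ref{lemma_lambda_1_images}(3), or more directly, by choosing $P$ appropriately and noting that $\phi(P)$ coarsely covers a member $A_\alpha$. Then apply a quasi-inverse $\bar\phi$ of $\phi$: since $\phi(P) \subset N_D(Z')$ and, conversely, a neighborhood of $\phi(P)$ covers each of its associated members $A_\alpha$ (because $\phi$ is a quasi-isometry and $P$ surjects coarsely onto a set whose $D$-neighborhood contains $A_\alpha$), we get $z'$ within bounded distance of $\phi(P) \subset \phi(Z)$. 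Care is needed here: one must use that $A_\alpha \subset N_{D'}(\phi(P))$ for a uniform $D'$, which follows because $A_\alpha$ contains a flat Hausdorff-close to $\phi(F)$ for a flat $F \subset P$, so $A_\alpha$ and $\phi(P)$ coarsely intersect in an infinite-diameter set, and then by Lemma \ref{lemma_equal_cosets}-type reasoning (or directly from the flat being Hausdorff-close to $\phi(F) \subset \phi(P)$) the whole of $A_\alpha$ is coarsely contained in $\phi(P)$.

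\textbf{The main obstacle} I anticipate is getting the reverse inclusion with a constant depending only on $\phi$: this requires knowing that the members $A_\alpha$ of $Z'$ are not just individually close to $\phi(P)$ for various $P$, but that their union exhausts $Z'$ coarsely with a uniform constant. The cleanest route is to show $Z'$ itself is Hausdorff-close to $\bigcup_{P} A(P)$ where the union is over members $P$ of $Z$ and $A(P)$ denotes the union of the associated collection $\mathcal{A}$ — combining the fact (from Lemma \ref{lemma_lambda_1_images}) that each such union is coarsely contained in $Z'$, with the observation that $\phi^{-1}$ applied to $Z'$ lands coarsely in $Z$ (using that $\phi_1$ is a well-defined assignment respecting membership, plus coarse surjectivity of $\phi$). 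I would isolate this as the technical heart of the argument and handle the flat-tracking bookkeeping via Theorem \ref{thm_huang} and Lemma \ref{lemma_infinite_intersection}, exactly as in the proof of Lemma \ref{lemma_well_defined}.
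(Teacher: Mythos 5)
Your forward inclusion $\phi(Z) \subset N_C(Z')$ is exactly the paper's argument and is fine. The problem is in the reverse inclusion. Your primary route asserts that $A_\alpha \subset N_{D'}(\phi(P))$ because ``$A_\alpha$ and $\phi(P)$ coarsely intersect in an infinite-diameter set, and then by Lemma \ref{lemma_equal_cosets}-type reasoning the whole of $A_\alpha$ is coarsely contained in $\phi(P)$.'' That inference is false: infinite-diameter coarse intersection does not yield coarse containment, and Lemma \ref{lemma_equal_cosets} only applies to sets that are \emph{already} Hausdorff close, which is what you are trying to prove. A flat $F' \subset A_\alpha$ being Hausdorff close to $\phi(F)$ says nothing about the rest of the (typically much larger) coset $A_\alpha$. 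Moreover, even if each $A_\alpha$ were controlled, you would still need that every member of $Z'$ in $\mathcal{R}_0'$ arises as some $A_\alpha$ attached to some member $P$ of $Z$; nothing in Lemma \ref{lemma_lambda_1_images} guarantees this, since the members of $Z'$ are dictated by the hyperedge structure of $\Lambda_1(\Gamma')$, not by the image of $\phi$.

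The route you mention only in passing --- symmetrizing via the quasi-inverse --- is the paper's actual proof, and it is the one you should execute in full. Define $\phi^{-1}_0$ and $\phi^{-1}_1$ for a quasi-inverse $\phi^{-1}$ exactly as $\phi_0$ and $\phi_1$ were defined for $\phi$. The single computation needed is $\phi^{-1}_1(Z') = Z$: take a flat $F \subset P$ for a member $P$ of $Z$, let $F'$ be a flat Hausdorff close to $\phi(F)$ contained in a member $P'$ of $Z'$; then $\phi^{-1}(F')$ is Hausdorff close to $F$, so $\phi^{-1}_0(P') = Z$ and hence $\phi^{-1}_1(Z') = Z$. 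Applying your (correct) forward-direction argument to $\phi^{-1}$ now gives $\phi^{-1}(Z') \subset N_{D'}(Z)$, and the quasi-isometry inequalities convert this to $Z' \subset N_C(\phi(Z))$ for a constant depending only on the constants of $\phi$. This bypasses both gaps above; as written, your proposal does not close them.
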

\begin{proof}
	 Let $M$ be one larger than the maximal clique size of $\Gamma$. Since $Z$ is contained in the $M$ neighborhood of its members in $\mathcal{R}_0$, by Lemma \ref{lemma_lambda_1_images} and Lemma \ref{lemma_well_defined}, $\phi(Z) \subset N_D(Z')$. 
	 
	 Let $P \in \mathcal{R}_0$ be a member of $Z$ and $F$ a flat in $P$. Let $F'$ be a flat $D$--Hausdorff close to $\phi(F)$ and $P' \in \mathcal{R}_0'$ be a member of $Z'$ containing $F'$. Let $\phi^{-1}$ be the quasi-isometric inverse of $\phi$. We define $\phi^{-1}_0$ and $\phi^{-1}_1$ similarly as to $\phi_0$ and $\phi_1$. It follows that $\phi^{-1}(F')$ is $C'$--Hausdorff close to $F$, for some $C'>0$. Hence, we have that $\phi^{-1}_0(P') = Z$ and $\phi^{-1}_1(Z') = Z$. Therefore, $\phi^{-1}(Z') \subset N_{D'}(Z)$ for some $D'>0$. This proves the claim. 
\end{proof}

\begin{lemma} \label{lemma_phi_1_bijection}
	$\phi_1$ is a bijection.
\end{lemma}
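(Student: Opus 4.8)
The plan is to exhibit a two-sided inverse for $\phi_1$, built from the quasi-isometric inverse of $\phi$. Since $\phi^{-1}: W_{\Gamma'} \to W_{\Gamma}$ is again a quasi-isometry between right-angled Coxeter groups on triangle-free graphs, everything done so far applies verbatim with the roles of $\Gamma$ and $\Gamma'$ interchanged: Theorem \ref{thm_huang} holds for $\phi^{-1}$, so we obtain maps $\phi^{-1}_0: \mathcal{R}_0' \to \mathcal{R}_1$ and $\phi^{-1}_1: \mathcal{R}_1' \to \mathcal{R}_1$, to which Lemmas \ref{lemma_lambda_1_images}, \ref{lemma_well_defined} and \ref{lemma_hausdorff_close} apply. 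I will show $\phi_1$ is injective and surjective separately, using in both cases the key facts that a quasi-isometry carries Hausdorff-close subsets to Hausdorff-close subsets, that $\phi \circ \phi^{-1}$ and $\phi^{-1}\circ \phi$ move points a bounded distance, and that distinct cosets in a realization $\mathcal{R}_i$ are never Hausdorff close (Lemma \ref{lemma_equal_cosets}).

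For injectivity, suppose $\phi_1(Z_1) = \phi_1(Z_2)$ for $Z_1, Z_2 \in \mathcal{R}_1$. By Lemma \ref{lemma_hausdorff_close}, each of $\phi(Z_1)$ and $\phi(Z_2)$ is $C$--Hausdorff close to this common coset of $\mathcal{R}_1'$, so $\phi(Z_1)$ and $\phi(Z_2)$ are $2C$--Hausdorff close. Applying $\phi^{-1}$ and using that $\phi^{-1}\circ\phi$ is bounded distance from the identity, we conclude $Z_1$ and $Z_2$ are Hausdorff close in $\Sigma_\Gamma$; since both lie in $\mathcal{R}_1 = \mathcal{R}_1(\Gamma)$, Lemma \ref{lemma_equal_cosets} gives $Z_1 = Z_2$.

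For surjectivity, fix $Z' \in \mathcal{R}_1'$ and set $Z = \phi^{-1}_1(Z') \in \mathcal{R}_1$. By Lemma \ref{lemma_hausdorff_close} applied to $\phi^{-1}$, the set $\phi^{-1}(Z')$ is Hausdorff close to $Z$; applying $\phi$ shows $\phi(Z)$ is Hausdorff close to $\phi(\phi^{-1}(Z'))$, which is itself within bounded distance of $Z'$. On the other hand, Lemma \ref{lemma_hausdorff_close} applied to $\phi$ shows $\phi(Z)$ is Hausdorff close to $\phi_1(Z) \in \mathcal{R}_1'$. Chaining these, $\phi_1(Z)$ is Hausdorff close to $Z'$, and both are cosets of $\mathcal{R}_1'$, so Lemma \ref{lemma_equal_cosets} forces $\phi_1(Z) = Z'$. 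Hence $\phi_1$ is a bijection, with inverse $\phi^{-1}_1$.

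I do not expect a serious obstacle; the substantive content has already been pushed into Lemmas \ref{lemma_well_defined} and \ref{lemma_hausdorff_close}. The one point requiring a little care is the appeal to the ``mirror'' construction for $\phi^{-1}$: one must check that the constants produced by Theorem \ref{thm_huang} and by Definition \ref{def_phi_0} for $\phi^{-1}$ depend only on the quasi-isometry constants (they do), so that $\phi^{-1}_0$ and $\phi^{-1}_1$ are genuinely well-defined and Lemma \ref{lemma_hausdorff_close} is available for them. Everything else is bookkeeping with Hausdorff-closeness constants.
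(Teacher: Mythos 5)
Your proposal is correct and follows essentially the same route as the paper: injectivity via Lemma \ref{lemma_hausdorff_close} plus Lemma \ref{lemma_equal_cosets}, and surjectivity via the quasi-inverse $\phi^{-1}$ and the mirrored construction (the paper phrases the surjectivity step more tersely by pulling back a single flat in $Z'$, but the underlying mechanism is identical). No gaps.
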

\begin{proof}
	To prove $\phi_1$ is injective, suppose $\phi_1(Z_1) = \phi_1(Z_2)$ for $Z_1, Z_2 \in \mathcal{R}_1$. By Lemma \ref{lemma_hausdorff_close}, $Z_1$ and $Z_2$ are Hausdorff close. By Lemma \ref{lemma_equal_cosets}, $Z_1 = Z_2$.
	
	To show $\phi_1$ is surjective, let $Z' \in \mathcal{R}_1'$ and $F'$ be a flat in $Z'$. $\phi^{-1}(F')$ is Hausdorff close to some flat $F$ contained in some $Z \in \mathcal{R}_1$. Therefore, $\phi_1(Z) = Z'$.
\end{proof}

We can now prove the main theorem of this section: 

\begin{proof}[Proof of Theorem \ref{thm_qi_invariant}]
	Suppose first the hypergraph index of $\Gamma$ is $\infty$. By Remark \ref{rmk_infinite_hyp_index}, $W_{\Gamma}$ is relatively hyperbolic. It follows by \cite[Theorem I]{BHSC} and \cite[Theorem 4.8]{BDM} that relative hyperbolicity is a quasi-isometry invariant in the setting of right-angled Coxeter groups. Therefore, $W_{\Gamma'}$ is relatively hyperbolic and its hypergraph index is $\infty$ as well.
	
	For the next case, suppose the hypergraph index of $\Gamma$ is $0$. Wide right-angled Coxeter groups (see sections \ref{sec_thick_overview} and \ref{sec_thick_bounds} for background) are precisely those with hypergraph index $0$  \cite[Proposition 2.11]{BHSC}. Wideness of a group is a quasi-isometry invariant, thus it follows $W_{\Gamma'}$ is also wide and has hypergraph index $0$ as well.
	
	For the general case, we may assume that the hypergraph index of $\Gamma$ is $h$, where $1 \le h < \infty$. By Lemma \ref{lemma_phi_1_bijection} we have a bijection $\phi_1: \mathcal{R}_1 \to \mathcal{R}'_1$ and by Lemma \ref{lemma_hausdorff_close}, $\phi$ sends any element $Z \in \mathcal{R}_1$ $C$--Hausdorff close to $\phi_1(Z) \in \mathcal{R}_1'$. By Remark \ref{rmk_coarse_intersection_qi}, the coarse intersection degree of $\mathcal{R}_1$ is the same as that of $\mathcal{R}_1'$. By Lemma \ref{lemma_coarse_intersection_degree}, $\Gamma$ and $\Gamma'$ have the same hypergraph index. 
\end{proof}

\section{Thickness bounds} \label{sec_thick_bounds}

We show the hypergraph index of the right-angled Coxeter group yields upper bounds on the group's order of thickness, order of algebraic thickness and divergence. Throughout, $\Gamma$ will always denote a simplicial graph, and $W_{\Gamma}$ is the associated right-angled Coxeter group. We emphasize that for the remainder of the paper, no restrictions are placed on $\Gamma$.

The next two theorems summarize results in the literature that show there are many equivalent ways of describing thick of order 0 and thick of order 1 right-angled Coxeter groups. The proof follows from work in \cite{BHSC}, \cite{BFHS}, \cite{DT} and \cite{Lev}. For a definition of a CFS graph, see \cite{BFHS}. For the definition of divergence used, see \cite{Lev}.

\begin{theorem}[Thick of order 0 classification] \label{thm_thick_0_classification}
	The following are equivalent:
	\begin{enumerate}
		\item $\Gamma = A \star B$, with $A$ and $B$ each containing a pair of non-adjacent vertices
		\item $W_{\Gamma}$ is algebraically thick of order 0
		\item $W_{\Gamma}$ is thick of order 0
		\item The divergence of $W_{\Gamma}$ is linear
		\item $\Gamma$ has hypergraph index 0
	\end{enumerate}
\end{theorem}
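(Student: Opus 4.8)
The statement is essentially a bookkeeping result that collects facts from several sources, so the plan is to verify a small web of equivalences rather than one long cycle. I would establish $(1)\Leftrightarrow(5)$ and $(2)\Leftrightarrow(3)$ directly from the definitions in this paper, obtain $(1)\Leftrightarrow(2)$ from the characterization of wide right-angled Coxeter groups in \cite{BHSC}, and obtain $(1)\Leftrightarrow(4)$ from the divergence computations of \cite{DT} and \cite{Lev}. Since $1$ is linked to each of $2,4,5$ and $2$ is linked to $3$, this connects all five conditions.

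For $(1)\Leftrightarrow(5)$ I would argue straight from Definitions \ref{def_wide_subgraphs} and \ref{def_hypergraph_index}. If $\Gamma = A\star B$ with each of $A,B$ containing a non-adjacent pair, then $\Gamma$ is trivially maximal among induced subgraphs of itself of the relevant form, so $\Gamma\in\Omega(\Gamma)$; hence $V(\Gamma)$ is a hyperedge of $\Lambda_0(\Gamma)$, $\Omega(\Gamma)\neq\emptyset$, and the hypergraph index is $0$. Conversely, if the hypergraph index is $0$, then $\Omega(\Gamma)\neq\emptyset$ and some hyperedge of $\Lambda_0(\Gamma)$ contains every vertex; as the hyperedges of $\Lambda_0(\Gamma)$ are vertex sets of subgraphs in $\Omega(\Gamma)\cup\Psi(\Gamma)$, and an induced subgraph whose vertex set is all of $V(\Gamma)$ is $\Gamma$ itself, we get $\Gamma\in\Omega(\Gamma)\cup\Psi(\Gamma)$. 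The case $\Gamma\in\Psi(\Gamma)$ is excluded: writing $\Gamma = \{a_1,a_2\}\star K$ with $K$ a clique, the pair $\{a_1,a_2\}$ is the unique non-edge of $\Gamma$, so $\Gamma$ admits no induced subgraph of the form $A'\star B'$ with a non-edge on each side, contradicting $\Omega(\Gamma)\neq\emptyset$. Hence $\Gamma\in\Omega(\Gamma)$, which is exactly condition $(1)$.

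The equivalence $(2)\Leftrightarrow(3)$ requires only unpacking the definitions of Section \ref{sec_thick_overview}: for a single group, ``algebraically thick of order $0$'' and ``thick of order $0$'' both assert precisely that $W_\Gamma$ is wide, the uniformity in the definition of a uniformly wide family being automatic for a one-element family. For $(1)\Rightarrow(2)$, condition $(1)$ means $W_\Gamma = W_A\times W_B$ with both factors infinite, and a direct product of two infinite finitely generated groups is wide: every asymptotic cone splits as a nontrivial product and so has no cut point, and every vertex lies on a bi-infinite geodesic of the form $\gamma\times\{b\}$. The reverse implication $(2)\Rightarrow(1)$ is the characterization of wide right-angled Coxeter groups taken from \cite{BHSC}. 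Finally, $(1)\Leftrightarrow(4)$ is where the substantive geometry enters: by the divergence results of \cite{DT} and \cite{Lev}, $W_\Gamma$ has linear divergence precisely when $\Gamma$ has the join form in $(1)$, since if $\Gamma$ is not a join with a non-edge on each side then $W_\Gamma$ has at least quadratic divergence.

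I expect the only genuine obstacle to be the passage from linear divergence (equivalently, thickness of order $0$) back to the join condition $(1)$: this is not elementary and must be imported from \cite{DT} and \cite{Lev}; alternatively one can route $(3)\Rightarrow(4)$ through the general bound ``thick of order $0$ implies at most linear divergence'' of \cite{BDM} and then close the loop with \cite{DT} and \cite{Lev}. The other point demanding care is the strip-subgraph bookkeeping in $(1)\Leftrightarrow(5)$, where the hypothesis $\Omega(\Gamma)\neq\emptyset$ in Definition \ref{def_hypergraph_index} is exactly what prevents a degenerate join (one piece a clique) from masquerading as hypergraph index $0$.
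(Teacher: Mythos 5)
Your proposal is correct and follows essentially the same route as the paper, which likewise treats this as a collection of known facts: the $(1)\Leftrightarrow(5)$ equivalence is read off from the definition of hypergraph index (your careful exclusion of the $\Psi(\Gamma)$ case via the requirement $\Omega(\Gamma)\neq\emptyset$ is exactly the point), while the remaining implications are imported from \cite{BHSC} together with the divergence results of \cite{DT} and \cite{Lev}. The only difference is cosmetic: the paper closes the cycle $1\to2\to3\to4\to1$ where you use a star of equivalences centered at $(1)$.
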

\begin{proof}
	The implications $1 \to 2 \to 3 \to 4 \to 1$ are either obvious or follow from \cite{BHSC}. $5 \to 1 \to 5$ follows from the definition of hypergraph index.
\end{proof}

\begin{theorem}[Thick of order 1 classification] \label{thm_thick_1_classification}
The following are equivalent:
\begin{enumerate}
\item $\Gamma$ is CFS and $\Gamma \neq A \star B$, with $A$ and $B$ each containing a pair of non-adjacent vertices
\item $W_{\Gamma}$ is algebraically thick of order 1
\item $W_{\Gamma}$ is thick of order 1
\item The divergence of $W_{\Gamma}$ is quadratic
\item $\Gamma$ has hypergraph index 1
\end{enumerate}
\end{theorem}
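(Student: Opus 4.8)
The plan is to prove the cycle $1 \Rightarrow 2 \Rightarrow 3 \Rightarrow 4 \Rightarrow 1$ among the first four conditions, and then to fold in condition $5$ by proving $1 \Rightarrow 5$ and $5 \Rightarrow 3$. Throughout, the ``order exactly $1$'' clauses are handled using the preceding Theorem \ref{thm_thick_0_classification}: being wide, having linear divergence, being a join $A \star B$ with both factors containing a non-adjacent pair, and having hypergraph index $0$ are all the same condition, so in each case the ``not of order $0$'' half reduces to the standing hypothesis that $\Gamma$ is not such a join. The substantive input is the divergence and CFS literature (\cite{DT}, \cite{Lev}, \cite{BFHS}, \cite{BHSC}) together with Theorem \ref{intro_thm_bounds} and the bound \cite{BDM} that thickness of order $1$ forces divergence at most quadratic. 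Concretely: $2 \Rightarrow 3$ because algebraic thickness of order $1$ gives metric thickness of order at most $1$ and also ``not wide'', hence by Theorem \ref{thm_thick_0_classification} not thick of order $0$; $3 \Rightarrow 4$ because thickness of order at most $1$ bounds divergence by a quadratic polynomial \cite{BDM} while order exactly $1$ excludes linear divergence by Theorem \ref{thm_thick_0_classification}; and $4 \Rightarrow 1$ because the divergence characterizations of \cite{DT} and \cite{Lev} force $\Gamma$ to be CFS when the divergence is quadratic (a non-CFS graph makes $W_\Gamma$ either relatively hyperbolic or thick of order at least $2$, neither of which has quadratic divergence), and $\Gamma$ cannot be a join as in condition $1$ since that gives linear divergence.

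For $1 \Rightarrow 2$, I would construct an explicit tight algebraic network. Write $\Gamma = \Gamma_1 \star K$ with $K$ the (possibly empty) clique of vertices adjacent to every other vertex (Remark \ref{rmk_wide_breakdown}), so $\Gamma_1$ is CFS with a full-support connected component $\mathcal{C}$ of its square graph. Let $\mathcal{H}$ consist of the special subgroups $W_{V(\sigma)\cup K}$, one for each induced $4$-cycle $\sigma$ in $\mathcal{C}$. Each such subgroup has defining graph a join with two factors each containing a non-adjacent pair, hence is wide, i.e. algebraically thick of order $0$, by Theorem \ref{thm_thick_0_classification}; each is convex by Lemma \ref{lemma_special_subgroups}; their union generates a finite-index subgroup because $\mathcal{C}$ has full support; and when $\sigma,\sigma'$ are adjacent in the square graph they share a diagonal pair $\{s,t\}$ of non-adjacent vertices, so $W_{V(\sigma)\cup K}\cap W_{V(\sigma')\cup K}$ contains the infinite dihedral group $W_{\{s,t\}}$, an infinite path-connected subgroup, and connectedness of $\mathcal{C}$ chains these intersections between any two members. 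Hence $W_\Gamma$ is a tight algebraic network over order-$0$ algebraically thick subgroups, so it is algebraically thick of order at most $1$, and it is not wide since $\Gamma$ is not a join as in condition $1$; therefore it is algebraically thick of order \emph{exactly} $1$.

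For $1 \Leftrightarrow 5$, the direction $5 \Rightarrow 3$ is immediate: hypergraph index $1$ gives thickness of order at most $1$ by Theorem \ref{intro_thm_bounds}, and index $1$, being nonzero, gives via Theorem \ref{thm_thick_0_classification} that $W_\Gamma$ is not wide, hence thick of order exactly $1$. For $1 \Rightarrow 5$: since $\Gamma$ is CFS it contains an induced $4$-cycle, so $\Omega(\Gamma)\neq\emptyset$, and $\Gamma$ is not a join as in condition $1$, so the hypergraph index is positive; it remains to bound it by $1$. With notation as above, each induced $4$-cycle $\sigma$ of $\mathcal{C}$ has $\sigma\star K$ contained in a maximal wide subgraph $L_\sigma\in\Omega(\Gamma)$, and adjacency of $\sigma,\sigma'$ in the square graph forces $L_\sigma$ and $L_{\sigma'}$ to share the corresponding diagonal pair, a non-adjacent pair, so all the $L_\sigma$ lie in one $\equiv_0$-class; the union of that class contains $\bigcup_\sigma (V(\sigma)\cup K) = V(\Gamma)$ by full support, so a hyperedge of $\Lambda_1(\Gamma)$ equals $V(\Gamma)$ and the hypergraph index is exactly $1$. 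I expect the main obstacle to lie in reconciling conventions across the cited sources — in particular, fixing the precise CFS definition of \cite{BFHS} and the exact divergence statements of \cite{DT} and \cite{Lev} when $\Gamma$ may contain triangles — and in checking that the clique factor $K$ (Remark \ref{rmk_wide_breakdown}) and the strip subgraphs (Remark \ref{rmk_strip_subgroup_intersection}) are treated consistently through all of these translations.
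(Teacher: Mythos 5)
Your implication scheme ($1\Rightarrow 2\Rightarrow 3\Rightarrow 4\Rightarrow 1$, plus $1\Rightarrow 5$ and $5\Rightarrow 3$ via Theorem \ref{thm_thick_upper_bound}) is exactly the one the paper uses; the paper simply delegates $1\to 2\to 3\to 4$ to \cite{BHSC} and \cite{BFHS}, $4\to 1$ to \cite{Lev}, and calls $1\to 5$ an easy exercise, whereas you supply the details (the explicit tight algebraic network over $4$--cycle special subgroups and the $\equiv_0$--chaining of the maximal wide subgraphs $L_\sigma$), and these details are correct. The only points worth tightening are cosmetic: the divergence bound for thick-of-order-$1$ spaces is \cite[Corollary 4.17]{BD} rather than \cite{BDM}, and concluding ``quadratic'' from ``at most quadratic and not linear'' implicitly uses the linear/quadratic divergence gap for CAT(0) groups, which holds here.
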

\begin{proof}
The implication $1 \to 2 \to 3 \to 4$ are either obvious or follow from \cite{BHSC} and \cite{BFHS}. $4 \to 1$ follows from \cite{Lev}. $5 \to 3$ follows from Theorem \ref{thm_thick_upper_bound} below. $1 \to 5$ is an easy exercise. 
\end{proof}

The hypergraph index yields an upper bound for the order of thickness:

\begin{theorem} \label{thm_thick_upper_bound}
If $W_{\Gamma}$ has hypergraph index $h$, then $W_{\Gamma}$ is thick of order at most $h$.
\end{theorem}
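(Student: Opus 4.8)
The plan is to induct on the hypergraph index $h$. The base cases $h = 0$ and $h = 1$ are already established: $h=0$ is handled by Theorem~\ref{thm_thick_0_classification}, and for $h=1$ one observes that a graph of hypergraph index $1$ is CFS and not a join, hence $W_\Gamma$ is thick of order $1$ by the work of \cite{BHSC} and \cite{BFHS}. (Strictly, we only need ``thick of order at most $h$'', so even the CFS characterization is more than required.) For the inductive step, assume the statement holds for all graphs of hypergraph index less than $h$, and let $\Gamma$ have hypergraph index $h \ge 2$. The idea is to realize $\Sigma_\Gamma$ as a tight network with respect to the cosets in the realization $\mathcal{R}_{h-1}(\Gamma)$, and then invoke the inductive hypothesis on the defining graphs of the special subgroups $W_H$ with $H$ a hyperedge of $\Lambda_{h-1}$.

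First I would check that $\Sigma_\Gamma$, or equivalently $W_\Gamma$ with its word metric, is a tight network with respect to $\mathcal{R}_{h-1}$. Quasi-convexity of each coset (condition a) is immediate from Lemma~\ref{lemma_special_subgroups}, since special subgroups are convex. For the covering condition (condition b), since $\Gamma$ has hypergraph index $h$ there is a hyperedge of $\Lambda_h$ containing every vertex; unwinding the definition of $\Lambda_h$ as the $\equiv_{h-1}$-equivalence class of hyperedges of $\Lambda_{h-1}$, together with the observation (as in the proof of Lemma~\ref{lemma_coarse_intersection_degree}) that every group element lies on a coset of a member of $\Lambda_{h-1}$ or on a strip coset absorbed into the $M$-neighborhood of such a member, gives that $W_\Gamma = \bigcup_{L \in \mathcal{R}_{h-1}} N_M(L)$. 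The chaining condition (condition c) is exactly what Lemma~\ref{lemma_infinite_intersection} and the argument in the proof of Lemma~\ref{lemma_coarse_intersection_degree} provide: given cosets $L_1, L_2 \in \mathcal{R}_{h-1}$ whose neighborhoods meet a common ball, one produces a bounded-length sequence of cosets in $\mathcal{R}_{h-1}$ with consecutive members having infinite-diameter, path-connected coarse intersections lying near the given ball; the bound on the sequence length comes from the fact that $L_1, L_2$ are all members of a common coset of $\mathcal{R}_h$ and the combinatorial diameter bound in Lemma~\ref{lemma_coarse_intersection_degree}. A minor wrinkle is that $\mathcal{R}_{h-1}$ may be infinite; one should pass to a $W_\Gamma$-invariant family and note that, up to the action, there are finitely many hyperedge types, so all constants can be chosen uniformly.

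Next I would handle the inductive hypothesis on the pieces. Each $L \in \mathcal{R}_{h-1}$ is of the form $gW_H$ where $H$ is a hyperedge of $\Lambda_{h-1}(\Gamma)$ that is not a strip subgraph; by Lemma~\ref{lemma_special_subgroups}, $W_H \cong W_\Delta$ where $\Delta$ is the induced subgraph on $V(H)$, and $W_H$ is isometrically embedded up to quasi-isometry. The key claim is that the hypergraph index of $\Delta$ is at most $h-1$: indeed, the hyperedges of $\Lambda_j(\Delta)$ for $j \le h-1$ should correspond to the members (in the sense of Definition~\ref{def_membership}) of $H$ lying in $\Lambda_j(\Gamma)$, so that some hyperedge of $\Lambda_{h-1}(\Delta)$ — namely $V(H)$ itself — contains all vertices of $\Delta$. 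This requires verifying that wide and strip subgraphs of $\Delta$ are exactly the wide and strip subgraphs of $\Gamma$ contained in $\Delta$ (which needs a small maximality argument, since a subgraph maximal-wide in $\Delta$ need not be maximal-wide in $\Gamma$ — but the construction of $\Lambda_{h-1}$ only ever referred to members of $H$, so this is consistent). Then by induction $W_H$ is thick of order at most $h-1$, and the restricted metric on each coset $gW_H$ is thick of order at most $h-1$ with uniform constants. Combining with the tight network structure gives that $W_\Gamma$ is thick of order at most $h$.

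The main obstacle I anticipate is the bookkeeping in the claim that the hypergraph index of the induced subgraph $\Delta$ on a hyperedge $H \in \Lambda_{h-1}(\Gamma)$ is at most $h-1$, together with the subtlety flagged in Remark~\ref{rmk_membership_subtlety}: containment of vertices does not imply membership, so one must track the membership tree, not just which vertices appear in which hyperedge. Getting the base of the induction to align cleanly (a single vertex or a strip piece is handled separately, and one must be careful that $W_H$ is genuinely \emph{wide} or of strictly smaller hypergraph index, never of infinite hypergraph index, which follows because $H$ being a non-strip hyperedge of $\Lambda_0$ forces $H \in \Omega(\Gamma)$) is the other delicate point. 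The tight-network verification itself is essentially a repackaging of Lemma~\ref{lemma_infinite_intersection} and the proof of Lemma~\ref{lemma_coarse_intersection_degree}, so I expect it to go through without new ideas.
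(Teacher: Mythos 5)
Your argument is correct in outline, and its top layer --- exhibiting $\Sigma_{\Gamma}$ as a tight network over the cosets in $\mathcal{R}_{h-1}$, with quasi-convexity from Lemma \ref{lemma_special_subgroups} and the chaining condition from Lemma \ref{lemma_infinite_intersection} together with the bounded-length chains constructed in the proof of Lemma \ref{lemma_coarse_intersection_degree} --- is the same as the paper's. Where you genuinely diverge is in how the pieces are shown to be thick of order at most $h-1$. The paper never inducts on the hypergraph index of induced subgraphs: it runs the entire coarse intersection sequence $\{\mathcal{R}_0,\dots,\mathcal{R}_h\}$ inside $\Sigma_{\Gamma}$, so that each coset of $\mathcal{R}_i$ is a tight network over its members in $\mathcal{R}_{i-1}$, bottoming out in the observation that the cosets in $\mathcal{R}_0$ are uniformly wide (finitely many isometry types of products of infinite special subgroups); the inductive definition of metric thickness then closes the argument in a few lines. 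Your route instead applies the theorem recursively to the graph $\Delta$ induced on a non-strip hyperedge $H\in\Lambda_{h-1}(\Gamma)$, which forces you to prove the auxiliary claim that $\Delta$ has hypergraph index at most $h-1$. That claim is true, and your sketch of it is essentially right: each level-$0$ member of $H$ is already a maximal element of $\Omega(\Delta)\cup\Psi(\Delta)$ (any witness to non-maximality inside $\Delta$ would extend to one inside $\Gamma$), the $\equiv_j$-chains among members of $H$ transfer verbatim to $\Lambda_j(\Delta)$, and the additional hyperedges of $\Lambda_0(\Delta)$ that do not come from members --- the direction in which your ``exactly'' fails, since a join maximal in $\Delta$ need not be maximal in $\Gamma$ --- can only accelerate the merging, so they never raise the index. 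But this is an extra combinatorial lemma the paper does not need, and it is precisely where the membership subtlety of Remark \ref{rmk_membership_subtlety} has to be tracked. The trade-off: the paper's formulation sidesteps that bookkeeping because the lower-order thick structure of $W_H$ is read off from the members of $H$ in $\mathcal{R}_{h-2}$ rather than from $\Lambda_{h-2}(\Delta)$, while your version yields a statement of independent interest (the hypergraph index does not increase upon passing to the special subgroup on a hyperedge). One small simplification: you do not need the CFS characterization for $h=1$; once $\mathcal{R}_0$ is known to be uniformly wide, the induction is anchored at $h=0$ by Theorem \ref{thm_thick_0_classification} and the general step already covers $h=1$.
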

\begin{proof}
Subspaces in $\mathcal{R}_0$ are uniformly wide as they consist of finitely many isometry classes of products of infinite groups. For any $i$, the elements of $\mathcal{R}_i$ are convex by Lemma \ref{lemma_special_subgroups}. Therefore, Lemma \ref{lemma_coarse_intersection_degree} shows that $\mathcal{R}_i$ is a tight network of subspaces with respect to $\mathcal{R}_{i-1}$. Finally, $\mathcal{R}_h$ consists of all of $W_{\Gamma}$. It follows that $W_{\Gamma}$ is thick of degree at most $h$.
\end{proof}

The next corollary follows from the above theorem and \cite[Corollary 4.17]{BD}.

\begin{corollary}
If $W_{\Gamma}$ has hypergraph index $h$, then the divergence of $W_{\Gamma}$ is bound above by a polynomial of degree $h+1$.
\end{corollary}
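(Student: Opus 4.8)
The plan is to deduce this directly from Theorem~\ref{thm_thick_upper_bound} together with the general principle, established by Behrstock--Dru\c{t}u in \cite[Corollary 4.17]{BD}, that a metric space which is (strongly) thick of order at most $n$ has divergence bounded above by a polynomial of degree $n+1$. So the corollary is essentially a matter of feeding the output of Theorem~\ref{thm_thick_upper_bound} into this external result.

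First I would invoke Theorem~\ref{thm_thick_upper_bound} to conclude that $W_{\Gamma}$, with its word metric, is thick of order at most $h$. Here it is worth recalling exactly what witnesses this, since the hypotheses of \cite[Corollary 4.17]{BD} demand uniform constants throughout the decomposition: by Lemma~\ref{lemma_coarse_intersection_degree} the chain $\{\mathcal{R}_0, \mathcal{R}_1, \dots, \mathcal{R}_h\}$ is a coarse intersection sequence with a single coarse intersection constant $M$ (one more than the maximal clique size of $\Gamma$); the base family $\mathcal{R}_0$ is uniformly wide, being a finite union of isometry types of direct products of infinite special subgroups; every element of every $\mathcal{R}_i$ is convex by Lemma~\ref{lemma_special_subgroups}; and $\mathcal{R}_h$ coarsely covers (indeed contains) all of $W_{\Gamma}$. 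Thus each $\mathcal{R}_i$ is a tight network with respect to $\mathcal{R}_{i-1}$ with uniform constants, which is precisely a strong thickness decomposition of order at most $h$.

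With this decomposition in hand, the second step is to apply \cite[Corollary 4.17]{BD} to obtain a polynomial upper bound of degree $h+1$ on the divergence of $W_{\Gamma}$; and since $W_{\Gamma}$ acts geometrically on the $1$--skeleton of its Davis complex $\Sigma_{\Gamma}$ and divergence is a quasi-isometry invariant, this is exactly a bound on the divergence of the group. The only point requiring genuine care is verifying that the decomposition above really meets the uniformity hypotheses of \cite[Corollary 4.17]{BD} --- that the quasi-convexity constants and the tight-network constants $(C,L)$ can be taken uniform across the (finite, since $h < \infty$) induction --- but this follows at once from the uniformity of $M$ and of the convexity of special subgroups. (In the degenerate case $h = 0$ the statement reduces to linear divergence, consistent with Theorem~\ref{thm_thick_0_classification}.) I do not anticipate any substantive obstacle beyond this bookkeeping, as all of the analytic content is already packaged into Theorem~\ref{thm_thick_upper_bound} and \cite[Corollary 4.17]{BD}.
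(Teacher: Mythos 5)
Your proposal is correct and matches the paper's argument exactly: the corollary is deduced by combining Theorem \ref{thm_thick_upper_bound} with \cite[Corollary 4.17]{BD}. The additional remarks on uniformity of the constants are sensible bookkeeping but do not change the route.
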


The hypergraph index also provides an upper bound on the order of algebraic thickness:

\begin{theorem} \label{thm_alg_thick_upper_bound}
If $W_{\Gamma}$ has hypergraph index $h>0$, then $W_{\Gamma}$ is algebraically thick of order at most $2h-1$.
\end{theorem}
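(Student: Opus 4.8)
Here is how I would approach the proof.

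\medskip

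\noindent The plan is to run an induction over the levels of the lambda hierarchy, parallel to the proof of Theorem \ref{thm_thick_upper_bound}, but upgrading the conclusion from metric to algebraic thickness. The inductive statement I would establish is: for every $i\ge 1$ and every hyperedge $H$ of $\Lambda_i(\Gamma)$ whose induced subgraph is not a strip subgraph, the special subgroup $W_H$ is algebraically thick of order at most $2i-1$; the base case $i=0$ records that the non-strip hyperedges of $\Lambda_0$ are precisely the wide subgraphs, so $W_H=W_{A\star B}$ is algebraically thick of order $0$ by Theorem \ref{thm_thick_0_classification}. Since $W_\Gamma$ has hypergraph index $h$, some hyperedge of $\Lambda_h(\Gamma)$ contains every vertex of $\Gamma$, and applying the inductive statement at level $h$ to this hyperedge would give the theorem.

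\medskip

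\noindent First I would set up the inductive step. Fix a non-strip hyperedge $H$ of $\Lambda_{i+1}(\Gamma)$. The candidate tight algebraic network for $W_H$ consists of the special subgroups $W_{H'}$, where $H'$ ranges over the non-strip members of $H$ in $\Lambda_i(\Gamma)$, together with an auxiliary family of subgroups needed to handle the strip members (discussed below). Quasi-convexity (condition (a) of a tight algebraic network) is immediate from Lemma \ref{lemma_special_subgroups}. The crucial point, and the place where the combinatorics of the lambda hypergraphs is exploited, is the chain condition (c): for algebraic thickness one needs the actual subgroups --- not merely coarse neighbourhoods, as in Lemma \ref{lemma_coarse_intersection_degree} --- to be joined by chains with \emph{infinite, path-connected} pairwise intersections. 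But this is exactly what Definition \ref{def_lambda_hypergraphs} supplies: if $H',H''$ are consecutive in an $\equiv_i$-chain, then $H'\cap H''$ contains a non-adjacent pair $\{s,t\}$, and $W_{H'}\cap W_{H''}=W_{H'\cap H''}$ contains the infinite dihedral group $\langle s,t\rangle$, which is path-connected since it is a special subgroup. Hence, \emph{modulo} the covering condition (b), $W_H$ would be a tight algebraic network over subgroups that are algebraically thick of order at most $2i-1$ by induction, giving order at most $2i$.

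\medskip

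\noindent The obstruction --- and the reason the final bound should be $2h-1$ rather than $h$ --- comes from the strip subgraph members. A strip special subgroup $W_{A\star K}\cong D_\infty\times W_K$ is two-ended; its asymptotic cones are lines, so it has cut-points and is \emph{not} algebraically thick of any finite order, and therefore it cannot be used as a subgroup in an algebraic network. Moreover, the vertices of a strip member need not all be covered by the non-strip members (the designated pair $A$ always is, since it lies in any $\equiv_i$-neighbour of the strip, but a clique-part vertex may be covered only by strip subgraphs), so condition (b) can genuinely fail --- as explicit small graphs of hypergraph index $2$ show. Two ingredients resolve this. First, chains can always be rerouted \emph{around} a strip member through its designated pair $A$, using Remark \ref{rmk_strip_subgroup_intersection} (no two strip members are $\equiv_i$-adjacent). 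Second, to cover the remaining directions the plan is to adjoin auxiliary thick subgroups --- for instance the enlargement of a member by the clique-parts of the strips attached to it, together with further thick subgroups chosen so as to meet a member in an infinite-diameter set --- and to argue that each such auxiliary subgroup is algebraically thick of order at most one more than the member it repairs. Carried out this way, each level of the lambda hierarchy unfolds into two levels of the thickness hierarchy: one for the core network step above, and one for the augmentation step, which is what turns the naïve bound $h$ into $2h-1$.

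\medskip

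\noindent I expect the main obstacle to be exactly the control of these auxiliary subgroups: verifying that the enlarged subgroups are quasi-convex and that they are algebraically thick of the claimed one-higher order, and checking that the enlarged family still satisfies the covering condition (b) for $W_H$. This is where the maximality properties of wide and strip subgraphs (Definition \ref{def_wide_subgraphs}, Remarks \ref{rmk_wide_breakdown} and \ref{rmk_strip_subgroup_intersection}) and the subtle bookkeeping of which wide and strip subgraphs belong to which lambda-hyperedge (Remark \ref{rmk_membership_subtlety}) are used most heavily; the degenerate configurations --- strips whose clique-parts avoid every wide subgraph --- are the ones that demand the most care and that force the loss of the extra level at each stage.
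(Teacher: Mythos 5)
Your plan is essentially the paper's proof: an induction in which each level of the lambda hierarchy costs two levels of algebraic thickness --- one for the tight algebraic network over the non-strip members (with chain condition supplied by the non-adjacent pairs in consecutive hyperedge intersections, and strips rerouted around via Remark \ref{rmk_strip_subgroup_intersection}), and one for absorbing each strip hyperedge $S$ by replacing $W_S$ with $W_{S\cup E}$ for a non-strip member $E$ meeting $S$ in a non-adjacent pair. The one step you flag as the main obstacle --- that this augmented subgroup is algebraically thick of order one more than $W_E$ --- is exactly what the paper handles by citing \cite[Proposition A.2]{BHSC}.
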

\begin{proof}
The proof will be by induction. The base case when $h = 1$ follows from Theorem \ref{thm_thick_1_classification}.

Assume the claim is true for graphs of hypergraph index $h$ and suppose $\Gamma$ has hypergraph index $h+1$. Let $\{E_1,...,E_m \}$ be hyperedges of $\Lambda_h(\Gamma) = \Lambda_h$ which are not strip subgraphs ($\Lambda_h$ is the hypergraph from Definition \ref{def_lambda_hypergraphs}). By the induction hypothesis, the subgroups $\{W_{E_1},...,W_{E_m} \}$ are algebraically thick of order at most $2h-1$. Let $\{S_1,...,S_r\}$ be hyperedges of $\Lambda_h$ corresponding to strip subgraphs. 

Since $\Lambda$ has hypergraph index $h+1$ and by Remark \ref{rmk_strip_subgroup_intersection}, for each $S_i$ there is some $E_j \in \{E_1,...,E_m \}$ such that $S_i \cap E_j$ contain two non-adjacent vertices. Set $\bar{S}_i = E_j$. By \cite[Proposition A.2]{BHSC}, it follows that $W_{S_i \cup \bar{S}_i}$ is thick of order at most $(2h-1) +1 = 2h$. $W_{\Gamma}$ is then algebraically thick of order at most $2h+1$ is respect to the special subgroups: 
\[ \{W_{E_1},...,W_{E_m}\} \cup \{W_{S_1 \cup \bar{S}_1},..., W_{S_r \cup \bar{S}_r} \} \] 
\end{proof}

\section{Thickness $\neq$ algebraic thickness} \label{sec_thick_not_alg}

As described in the introduction, there are known examples of groups which are thick of order $1$, but are not algebraically thick of order $1$. However, by Theorem \ref{thm_thick_1_classification} we know for the class of right-angled Coxeter groups thickness of order $1$ is equivalent to algebraic thickness of order $1$. The following natural question suggests itself: for at least the class of right-angled Coxeter groups, is algebraic thickness of order $n$ equivalent to thickness of order $n$?

The goal of this section is to provide a negative answer to the above question: for each positive integer $n >1$, there exists a right-angled Coxeter group that is thick of order $n$ but is not algebraically thick of order $n$. This is the main content of Theorem \ref{thm_counterexample} stated below. The proof of this theorem relies on some preliminary lemmas which we first prove. We begin by recalling the definition from \cite{Lev} of a rank $n$ pair.

\begin{definition}[Rank $n$ pair] \label{def_rank_n_pair}
Given distinct vertices $s, t \in \Gamma$, $(s, t)$ is a \textit{non-commuting pair} if $s$ is not adjacent to $t$ in $\Gamma$.  A non-commuting pair $(s, t)$ is rank 1 if $s, t$ are not contained in some induced square of $\Gamma$. Additionally $(s, t)$ are rank $n$ if either every non-commuting pair $(s_1, s_2)$, with $s_1, s_2 \in Link(s)$, is rank $n-1$ or every non-commuting pair $(t_1, t_2)$, with $t_1, t_2 \in Link(t)$, is rank $n-1$. 
\end{definition}

\begin{theorem} \label{thm_counterexample}
Given an integer $n>1$, let $\Gamma$ be a graph satisfying the following hypotheses:

\begin{enumerate}
\item There is a subgraph, $B \subset \Gamma$ such that $V(\Gamma) \setminus V(B)$ is just two vertices: $u$ and $v$.
\item $B$ has hypergraph index $n-1$
\item $Link(u)$ is two non-adjacent vertices of $B$. Similarly, $Link(v)$ is two non-adjacent vertices of $B$.
\item For all $s \in \Gamma - Star(u)$, $(u, s)$ is a rank $n$ pair. Similarly, for all $s \in \Gamma - Star(v)$, $(v, s)$ is a rank $n$ pair.
\end{enumerate}

It follows the divergence of $W_{\Gamma}$ is a polynomial of degree $n+1$, $W_{\Gamma}$ is thick of order $n$, and $W_{\Gamma}$ is algebraically thick of order $d$, where $n < d \le 2n-1$. Furthermore, for every $n > 1$ such a graph, $\Gamma$, exists.
\end{theorem}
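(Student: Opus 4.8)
The plan is to establish four things in sequence: (i) a family of graphs satisfying hypotheses (1)--(4) exists; (ii) $\Gamma$ has hypergraph index exactly $n$, which immediately yields the upper bounds (thick of order $\le n$, divergence at most a polynomial of degree $n+1$, algebraically thick of order $\le 2n-1$); (iii) the rank-$n$ pair hypothesis forces the matching lower bounds, pinning down the divergence degree and the metric thickness order; and (iv) $W_\Gamma$ is \emph{not} algebraically thick of order $n$, so the algebraic thickness order $d$ satisfies $n<d\le 2n-1$.

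For (i), I would build the example recursively in $n$. Take $B$ to be a graph of hypergraph index $n-1$ chosen so that some non-adjacent pair $\{a_1,a_2\}\subset V(B)$ lies in a common wide subgraph of $B$ yet every non-commuting pair in $Link_B(a_1)$ (equivalently, in $Link_B(a_2)$) is rank $n-2$ in the resulting graph, and likewise a pair $\{b_1,b_2\}$ disjoint from $\{a_1,a_2\}$; then attach $u$ with $Link(u)=\{a_1,a_2\}$ and $v$ with $Link(v)=\{b_1,b_2\}$. Hypotheses (1), (3) hold by construction and (2) is the inductive choice. Hypothesis (4) then follows from Definition \ref{def_rank_n_pair}: for every $s\notin Star(u)$ it suffices that the unique non-commuting pair $(a_1,a_2)$ in $Link(u)$ be rank $n-1$, and this in turn reduces to the rank-$(n-2)$ condition on the links of $a_1$ and $a_2$, which in $\Gamma$ differ from those in $B$ only by the extra vertex $u$ whose new non-commuting pairs can be arranged to be rank $n-2$ as well. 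Carrying out this rank bookkeeping against Definition \ref{def_rank_n_pair} is routine but needs care.

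For (ii), I would compute the lambda hypergraphs directly. By hypothesis (3) neither $u$ nor $v$ lies in a wide subgraph, so the only hyperedges of $\Lambda_0(\Gamma)$ meeting $\{u,v\}$ are the strips $\{u\}\star Link(u)$ and $\{v\}\star Link(v)$, every wide subgraph of $\Gamma$ lies in $B$, and the evolution of the $\Lambda_i(\Gamma)$ restricted to $B$ is exactly that of the $\Lambda_i(B)$. Since $Link(u)=\{a_1,a_2\}$ lies in a wide subgraph of $B$ but, by the choice in (i), this wide subgraph is merged into an all-of-$B$ hyperedge only at step $n-1$, the vertex $u$ is first absorbed at step $n$, and likewise $v$; moreover $\Omega(\Gamma)\supseteq\Omega(B)\neq\emptyset$. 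Hence $\Gamma$ has hypergraph index $n$. Theorem \ref{thm_thick_upper_bound} and the corollary following it give that $W_\Gamma$ is thick of order $\le n$ with divergence at most a polynomial of degree $n+1$, and Theorem \ref{thm_alg_thick_upper_bound} gives algebraic thickness order $\le 2n-1$. For the lower bounds: hypothesis (4) exhibits a rank-$n$ pair in $\Gamma$ (e.g.\ $(u,v)$, as $v\notin Star(u)$), so the divergence lower bound of \cite{Lev} gives divergence at least a polynomial of degree $n+1$; combined with the upper bound, the divergence is a polynomial of degree exactly $n+1$. Since a space thick of order $k$ has divergence at most a polynomial of degree $k+1$ (\cite[Corollary 4.17]{BD}), $W_\Gamma$ is not thick of order $<n$, hence is thick of order exactly $n$; applying the same bound through \cite[Proposition 4.3]{BD} shows $d\ge n$.

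The main obstacle is step (iv): ruling out $d=n$. The plan is a contradiction argument. Suppose $W_\Gamma$ is algebraically thick of order $n$ with respect to a finite collection $\mathcal{H}$ of quasi-convex subgroups, each algebraically thick of order $\le n-1$ and hence (via \cite[Proposition 4.3, Corollary 4.17]{BD}) with divergence bounded by a polynomial of degree $n$. Using \cite{Lev}, the rank-$n$ pair $(u,s)$ produces an explicit divergence-witnessing configuration in $\Sigma_\Gamma$ -- a bi-infinite geodesic $\gamma$ in the $us$-direction together with points at distance $r$ whose avoidant paths have length of order $r^{n+1}$ -- and this configuration is coarsely covered by left cosets of subgroups in $\mathcal{H}$. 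The heart of the argument is to show the configuration cannot be split across several chained cosets: because $Link(u)$ is just a non-adjacent pair, the hyperplanes of type $u$ have one-dimensional carriers and separate $\Sigma_\Gamma$ into two ``deep'' halfspaces, so a quasi-convex subgroup meeting $\gamma$ in an unbounded set must in fact coarsely contain a half-line of $\gamma$ on each side of such a hyperplane; combined with the uniform bound on chain length built into the tight-network axiom, this should force a single coset $gH$ to coarsely contain all of $\gamma$, whence $H$ has divergence at least a polynomial of degree $n+1$ -- contradicting that $H$ is algebraically thick of order $\le n-1$. Making the ``cannot be split'' step rigorous -- i.e.\ controlling the interaction between quasi-convex subgroups, the type-$u$ hyperplanes, and the rank filtration from \cite{Lev} -- is the part I expect to require the most work.
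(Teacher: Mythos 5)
Your steps (ii)--(iii) (hypergraph index $n$ gives the upper bounds via Theorems \ref{thm_thick_upper_bound} and \ref{thm_alg_thick_upper_bound}, and a rank-$n$ pair gives the degree-$(n+1)$ divergence lower bound via \cite{Lev}, pinning down the thickness order and $d\ge n$) match the paper. But the heart of the theorem is ruling out $d=n$, and there your plan has a genuine gap. The argument you sketch --- that a divergence-witnessing geodesic cannot be ``split'' across the cosets of a tight algebraic network, so a single $H\in\mathcal{H}$ must have divergence of degree $n+1$ --- is not carried out, and it is not clear it can be: the tight-network axioms permit the geodesic to be coarsely covered by infinitely many cosets each meeting it in a bounded set, and nothing in the hyperplane-carrier observation forces a single coset to contain a half-line. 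The paper's obstruction is different and much cleaner: hypothesis (3) gives the splitting $W_{\Gamma}=W_{Star(u)}*_{Link(u)}W_B*_{Link(v)}W_{Star(v)}$ and hence an action on a Bass--Serre tree. Any quasi-isometrically embedded thick-of-order-$(n-1)$ subgroup contains no hyperbolic element (a hyperbolic $w$ has $L_u(w^i)$ or $L_v(w^i)$ growing linearly, and hypothesis (4) then forces the axis $\ldots www\ldots$ to have divergence of degree $n+1$, exceeding the degree-$n$ bound from \cite[Corollary 4.17]{BD}), so it is elliptic and lies in a conjugate of a vertex group; the star groups are virtually $\mathbb{Z}$, so it lies in a conjugate of $W_B$. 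Finitely many subgroups of conjugates of $W_B$ lie in the kernel of the retraction $W_{\Gamma}\to\mathbb{Z}_2*\mathbb{Z}_2$ killing $B$, hence generate an infinite-index subgroup, violating condition (b) of a tight algebraic network. The contradiction is thus to the generation condition, not to the divergence of any single member of $\mathcal{H}$.

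There is also a flaw in your existence/index bookkeeping in (i)--(ii). You require $Link(u)=\{a_1,a_2\}$ to lie in a common wide subgraph $H$ of $B$. But then the strip $\{u\}\star\{a_1,a_2\}$ intersects $H$ in a non-adjacent pair, so it is $\equiv_0$-equivalent to $H$ and $u$ is absorbed into $H$'s hyperedge already in $\Lambda_1(\Gamma)$; it then travels with $H$'s membership chain, so $u$ lies in the level-$(n-1)$ hyperedge containing all of $B$, which pushes the hypergraph index toward $n-1$ rather than $n$. Your assertion that ``$u$ is first absorbed at step $n$'' does not follow from ``$H$ is merged into an all-of-$B$ hyperedge only at step $n-1$.'' For the index to equal $n$ one needs $a_1$ and $a_2$ to first appear together in a hyperedge (other than $u$'s own strip) only at level $n-1$; this is what the paper's examples in Figure \ref{fig_counterexample} arrange, and it is consistent with hypothesis (4), which forces $(a_1,a_2)$ to be a rank-$(n-1)$ pair.
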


Figure \ref{fig_counterexample} gives a family of graphs which can be readily checked to satisfy the hypotheses of Theorem \ref{thm_counterexample}. This family of graphs proves the last statement of the theorem, namely the existence of such graphs. 

\textbf{For the remainder of this section we fix an integer $n>1$ and a graph $\Gamma$ satisfying the hypotheses of Theorem \ref{thm_counterexample}. Furthermore we fix $u, v \in V(\Gamma)$ as in the statement of the theorem.}
 
\begin{figure}[htp]
	\centering
	
	\begin{overpic}[scale=.8]{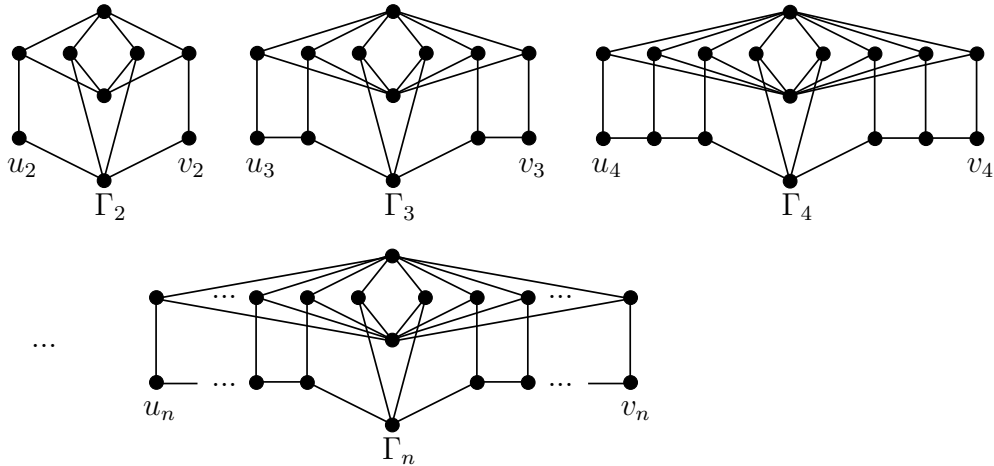}
		\put(8.5,22.5){$\Gamma_2$}
		\put(38.2,22.5){$\Gamma_3$}
		\put(79,22.5){$\Gamma_4$}
		\put(38,-2.5){$\Gamma_n$}
		
		\put(-.5,27){$u_2$}
		\put(17,27){$v_2$}
		
		\put(24,27){$u_3$}
		\put(52,27){$v_3$}
		
		\put(59.5,27){$u_4$}
		\put(98,27){$v_4$}
		
		\put(13.5,2){$u_n$}
		\put(62.5,2){$v_n$}
		
		\put(2,9){$...$}
		
		\put(20.5,5){$...$}
		\put(55,5){$...$}
		\put(20.5,14){$...$}
		\put(55,14){$...$}
	\end{overpic}
	
	\caption{The given family of graphs provides a family of right-angled Coxeter groups, $W_{\Gamma_n}$, for $n>1$. $W_{\Gamma_n}$ is thick of order $n$ but is algebraically thick of order strictly larger than $n$.} \label{fig_counterexample}
\end{figure}

$W_{\Gamma}$ decomposes as the amalgamated product $W_{\Gamma} = W_{star(u)} *_{link(u)} W_{B} *_{link(v)} W_{star(v)}$. It follows from Bass-Serre theory that $W_{\Gamma}$ acts on a tree $\mathcal{T}$, with fundamental domain the graph of groups shown in figure \ref{fig_graph_of_groups}. Fix this tree $\mathcal{T}$.
\vspace{.5cm}

\begin{figure}[htp]
\centering

\begin{overpic}[scale=1.5]{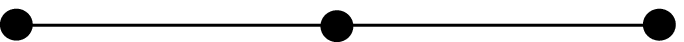}
\put(-15,-4){$W_{Star(u)} = \mathbb{Z}_2 \times (\mathbb{Z}_2 * \mathbb{Z}_2)$}
\put(10,4){$W_{Link(u)} = \mathbb{Z}_2 * \mathbb{Z}_2$}
\put(80,-4){$W_{Star(v)} = \mathbb{Z}_2 \times (\mathbb{Z}_2 * \mathbb{Z}_2)$}
\put(57,4){$W_{Link(v)} = \mathbb{Z}_2 * \mathbb{Z}_2$}
\put(47.5,-4){$W_B$}
\end{overpic}

\caption{Graph of groups corresponding to $W_{\Gamma}$.} \label{fig_graph_of_groups}
\end{figure}

Given $w$ a minimal length expression of a word in $W_{\Gamma}$, let $L_s(w)$ be the number of occurrences of the generator $s$ in $w$.

\begin{lemma} \label{lemma_hyp_isom_divergence}
Let $w \in W_{\Gamma}$ be a hyperbolic isometry of the Bass-Serre tree $\mathcal{T}$. The bi-infinite geodesic $...www...$, in the Davis complex $\Sigma_{\Gamma}$, has polynomial divergence of degree $n+1$.
\end{lemma}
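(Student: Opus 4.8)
The plan is to establish matching upper and lower bounds of degree $n+1$ for the divergence function of the $w$--invariant bi-infinite geodesic $\beta = \ldots www\ldots$. The divergence function of any bi-infinite geodesic is bounded above, up to the standard equivalence of functions, by the divergence function of the ambient space. Since $\Gamma$ has hypergraph index $n$ (the pairs of vertices of $B$ have rank at most $n-1$ because $B$ has hypergraph index $n-1$, while the rank $n$ hypotheses (4) account for all pairs involving $u$ or $v$, including $(u,v)$), the corollary to Theorem \ref{thm_thick_upper_bound} — or, alternatively, \cite{Lev} — shows that the divergence of $W_\Gamma$, hence that of $\beta$, is at most a polynomial of degree $n+1$. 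So the content of the lemma is the matching lower bound.

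\textbf{Lower bound: reduction via the tree.} Since $w$ is a hyperbolic isometry of $\mathcal{T}$, its axis $\ell \subset \mathcal{T}$ is bi-infinite. Every edge of $\mathcal{T}$ joins a vertex stabilized by a conjugate of $W_B$ to one stabilized by a conjugate of $W_{Star(u)}$ or $W_{Star(v)}$, so $\ell$ alternates between these two kinds of vertices and passes through infinitely many vertices stabilized by conjugates of $W_{Star(u)}$, or infinitely many stabilized by conjugates of $W_{Star(v)}$; assume the former, the other case being symmetric in $u$ and $v$. Each such vertex $g_i W_{Star(u)}$ on $\ell$ corresponds to a $u$--hyperplane $\mathcal{H}_i$ that $\beta$ crosses exactly once; its carrier is a strip $\Sigma_u \times \Sigma_{Link(u)}$, and since $\mathcal{H}_i$ is the coarse preimage of an edge-midpoint of $\mathcal{T}$, it coarsely separates $\Sigma_\Gamma$. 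Distinct $u$--hyperplanes are disjoint, so the halfspaces bounded by the $\mathcal{H}_i$ are nested along $\beta$, with consecutive walls bounding a region coarsely equal to a coset of $\Sigma_B$; and by $w$--periodicity a length $r$ sub-segment of $\beta$ crosses at least $\sim r$ of the $\mathcal{H}_i$.

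\textbf{Lower bound: the core estimate.} Fix a basepoint $p$ on $\beta$ lying on one of these walls and let $x,y \in \beta$ be at distance $\sim r$ from $p$ on opposite sides, separated by $\gtrsim r$ of the $\mathcal{H}_i$. A path $\alpha$ from $x$ to $y$ avoiding $B(p,r/2)$ must cross each intervening $\mathcal{H}_i$, in the order in which $\beta$ does, at a point of distance $\ge r/2$ from $p$. The crux — and the place where hypothesis (4) is used — is that $\alpha$ must have length $\succeq r^n$ between two consecutive such crossings: reaching a crossing point of $\mathcal{H}_i$ at distance $\ge r/2$ from $p$ while staying outside $B(p,r/2)$ amounts to circumventing a radius $\sim r$ ball inside a subcomplex carrying the rank $n-1$ pair $Link(u) = \{a_1,a_2\}$ (indeed $\mathcal{H}_i$ itself is a copy of the geodesic $\ldots a_1 a_2 a_1 a_2 \ldots$), which costs length $\succeq r^n$ by the divergence estimates of \cite{Lev} for rank $n-1$ pairs, and the hypothesis that $(u,s)$ is rank $n$ for every $s \notin Star(u)$ rules out a cheaper route transverse to $\mathcal{H}_i$. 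Summing the $\gtrsim r$ contributions then gives $\mathrm{length}(\alpha) \succeq r\cdot r^n = r^{n+1}$. The main obstacle is exactly this last step: making precise the per-crossing estimate and verifying, via convexity of special subgroups (Lemma \ref{lemma_special_subgroups}) and disk-diagram arguments in the spirit of Lemma \ref{lemma_infinite_intersection}, that the contributions of the $\gtrsim r$ distinct walls genuinely accumulate rather than overlap.
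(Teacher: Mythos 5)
Your overall structure matches the paper's: the upper bound is free from Theorem \ref{thm_thick_upper_bound} (since $\Gamma$ has hypergraph index $n$), so the content is the lower bound, which must come from hypothesis (4) via the rank $n$ pairs $(u,s)$. The paper's actual proof is much shorter and works combinatorially rather than through the tree axis and hyperplanes: hyperbolicity of $w$ on $\mathcal{T}$ implies (via normal forms) that $L_u(w^i)$ or $L_v(w^i)$ grows linearly in $i$; between any two occurrences of $u$ in a reduced expression there must be a letter $s$ not adjacent to $u$; each such $(u,s)$ is a rank $n$ pair by hypothesis; and then the entire lower bound is discharged by citing ``a slight modification of the proof of \cite[Theorem 7.9]{Lev}'', which is precisely the statement that geodesics containing (linearly many) rank $n$ pairs have divergence bounded below by a polynomial of degree $n+1$.

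This identifies the genuine gap in your proposal: your ``core estimate'' --- that a detour must pay $\succeq r^n$ per $u$--hyperplane crossing and that these costs accumulate additively over $\gtrsim r$ crossings --- is exactly the content of \cite[Theorem 7.9]{Lev}, and you assert it rather than prove it or cite it precisely. You acknowledge this yourself (``the main obstacle is exactly this last step''), but as written the proof is not closed: you would either need to reproduce the inductive rank-$n$-pair divergence argument of \cite{Lev} in your hyperplane picture, or simply invoke that theorem as the paper does. A secondary inaccuracy: you claim the per-crossing cost arises because $Link(u)=\{a_1,a_2\}$ is a rank $n-1$ pair, but this is not among the hypotheses --- Definition \ref{def_rank_n_pair} is a disjunction, so $(u,s)$ being rank $n$ may hold because every non-commuting pair in $Link(s)$ is rank $n-1$ rather than because $\{a_1,a_2\}$ is; the correct input is the rank $n$ pair $(u,s)$ itself, not its link. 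Finally, your parenthetical deriving the hypergraph index of $\Gamma$ from ranks conflates two different invariants; the paper instead observes directly that $\Gamma$ is $B$ plus two strip subgraphs, so its hypergraph index is $n$.
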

\begin{proof}
Since $w$ is hyperbolic, by putting a reduced expression of $w^i$ into normal form, we see that either $L_{u}(w^i)$ or $L_{v}(w^i)$ grows linearly with $i$. Without loss of generality, assume $L_{u}(w^i)$ grows linearly with $i$. Given a reduced expression, $g$, of $w^i$, it follows that given two occurrences of $u$ in $g$ there must exist some $s \in \Gamma$, which is not adjacent to $u$, between these occurrences (i.e. $g = ...u...s...u...$). Since for any such $s$, $(s,u)$ forms a rank $n$ pair by a hypothesis of Theorem \ref{thm_counterexample}, by a slight modification of the proof of \cite[Theorem 7.9]{Lev}, it follows that the bi-infinite geodesic $...www...$ has polynomial divergence of degree $n+1$. 
\end{proof}

\begin{lemma} \label{lemma_thick_n_conj}
Any quasi-isometrically embedded thick of order $n-1$ subgroup is contained in a conjugate of $W_B$.
\end{lemma}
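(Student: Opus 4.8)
The strategy would be to use the action of $W_\Gamma$ on the Bass--Serre tree $\mathcal{T}$: I will argue that any such subgroup $H$ must fix a vertex of $\mathcal{T}$, and that this vertex cannot have stabilizer conjugate to $W_{Star(u)}$ or $W_{Star(v)}$, so $H$ lies in a conjugate of $W_B$. Here $H$ is finitely generated (implicit in the phrase ``quasi-isometrically embedded''), and since $H$ is thick of order $n-1$, its divergence function is bounded above by a polynomial of degree $n$ by \cite[Corollary 4.17]{BD}; in particular $\mathrm{div}(H)$ is finite-valued.

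First I would rule out that $H$ contains an element $w$ acting hyperbolically on $\mathcal{T}$. If it did, Lemma \ref{lemma_hyp_isom_divergence} would supply a bi-infinite geodesic $\ell = \cdots www\cdots$ in $\Sigma_\Gamma$ whose divergence is a polynomial of degree $n+1$. The vertices of $\ell$ are the powers $w^i$, which lie in $H$, so $\ell$ is contained in a bounded neighborhood of $H$; and since $H \hookrightarrow W_\Gamma$ is a quasi-isometric embedding, a path in a Cayley graph of $H$ avoiding a large ball maps to a path in $\Sigma_\Gamma$ avoiding a proportionally large ball and of proportional length. Comparing configurations of points on $\ell$ with nearby points of $H$, this would show the divergence of $\ell$ inside $\Sigma_\Gamma$ is bounded above by a linear rescaling of $\mathrm{div}(H)$, hence by a polynomial of degree $n$ --- contradicting degree $n+1$. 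So every element of $H$ is elliptic on $\mathcal{T}$.

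Next, I would invoke the standard fact that a finitely generated group all of whose elements are elliptic on a tree fixes a vertex: by Serre's lemma two elliptic isometries with disjoint fixed-point sets generate a subgroup containing a hyperbolic isometry, so the fixed-point subtrees of a finite generating set of $H$ pairwise intersect, and finitely many pairwise-intersecting subtrees of a tree have a common point. Thus $H$ lies in a conjugate of one of the vertex groups $W_{Star(u)}$, $W_B$, $W_{Star(v)}$. By hypothesis (3) of Theorem \ref{thm_counterexample}, $Link(u)$ and $Link(v)$ are each a pair of non-adjacent vertices, so $W_{Star(u)} \cong \mathbb{Z}_2 \times (\mathbb{Z}_2 * \mathbb{Z}_2) \cong W_{Star(v)}$ is virtually cyclic. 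Since $n>1$, $H$ is thick of order $n-1 \ge 1$, hence not thick of order at most $n-2$; but a subgroup of a virtually cyclic group is virtually cyclic and so thick of order at most $0 \le n-2$. This contradiction shows $H$ cannot be contained in a conjugate of $W_{Star(u)}$ or $W_{Star(v)}$, so it is contained in a conjugate of $W_B$.

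The hard part is the first step, namely checking that a quasi-isometrically embedded subgroup cannot coarsely contain a bi-infinite geodesic whose divergence grows strictly faster than the subgroup's own divergence function --- once this is in hand, Lemma \ref{lemma_hyp_isom_divergence} closes the argument. The remaining two steps are routine Bass--Serre theory together with the explicit (virtually cyclic) structure of the star subgroups forced by the hypotheses of Theorem \ref{thm_counterexample}.
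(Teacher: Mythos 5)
Your proof follows essentially the same route as the paper: rule out elements acting hyperbolically on $\mathcal{T}$ by combining Lemma \ref{lemma_hyp_isom_divergence} with the degree-$n$ divergence bound for thick of order $n-1$ groups from \cite[Corollary 4.17]{BD}, deduce via Serre's lemma that the subgroup fixes a vertex of $\mathcal{T}$, and discard the star vertex groups because they are virtually cyclic; you in fact supply more detail than the paper on the quasi-isometric comparison of divergence functions and on passing from ``all elements elliptic'' to ``fixes a vertex.'' One small correction to your final step: an infinite virtually cyclic group is \emph{not} thick of order $0$ (its asymptotic cones are copies of $\mathbb{R}$, which have cut points, so it is not uniformly wide), and hence is not thick of any order; the contradiction should therefore be that $H$, being thick of order $n-1$, is in particular thick of some order, whereas no subgroup of a virtually cyclic group can be.
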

\begin{proof}
Let $G$ be such a thick of order $n-1$ quasi-isometrically embedded subgroup of $W_{\Gamma}$. Given $w \in G$, $w$ cannot act as a hyperbolic isometry of the Bass-Serre tree $\mathcal{T}$, for then by Lemma \ref{lemma_hyp_isom_divergence}, $G$ would have divergence at least a polynomial of degree $n+1$ which is not possible since thick of order $n-1$ groups have divergence at most $n$ by \cite[Corollary 4.17]{BD}. It follows that any $w \in G$ acts elliptically on $\mathcal{T}$. 

Since two elliptic isometries with disjoint fixed point set generate a hyperbolic element (see \cite[1.5]{CM}), we have that every element of $G$ is contained in some conjugate of $W_B$, some conjugate of $W_{Star(u)}$ or some conjugate of $W_{Star(v)}$. However, $W_{Star(u)}$ and $W_{Star(v)}$ are both virtually $\mathbb{Z}$ and so cannot contain a thick of order $n-1$ subgroup. Thus, $G$ must be contained in some conjugate of $W_B$.
\end{proof}

\begin{lemma} \label{lemma_infinite_index}
Let $\{G_1, ..., G_m \}$ be a finite set of subgroups contained in a conjugate of $W_B$. The subgroup, $G$, generated by $\cup_{i = 1}^n G_i$ is infinite index in $W_{\Gamma}$.
\end{lemma}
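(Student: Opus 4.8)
The plan is to produce an infinite quotient of $W_{\Gamma}$ in which the image of $W_{B}$ — and hence of $G$ — is trivial; this exhibits $G$ inside a proper normal subgroup of infinite index. Concretely, I would use the amalgam decomposition recorded above, $W_{\Gamma} = W_{Star(u)} *_{W_{Link(u)}} W_{B} *_{W_{Link(v)}} W_{Star(v)}$, and pass to the normal closure $K = \langle\langle W_{B} \rangle\rangle$ of $W_{B}$ in $W_{\Gamma}$. Since each $G_i$ is contained in a conjugate $g_iW_{B}g_i^{-1}$, and $K$ is normal and contains $W_{B}$, it contains every such conjugate; therefore $G = \langle G_1,\dots,G_m\rangle \subseteq K$.

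The substantive step is to identify $W_{\Gamma}/K$. Quotienting the path-of-groups above by the normal closure of its middle vertex group kills the two edge subgroups $W_{Link(u)}$ and $W_{Link(v)}$ inside the two outer vertex groups and destroys the amalgamations, giving $W_{\Gamma}/K \cong \big(W_{Star(u)}/\langle\langle W_{Link(u)}\rangle\rangle\big) * \big(W_{Star(v)}/\langle\langle W_{Link(v)}\rangle\rangle\big)$ (this is the standard fact that $(P *_{C} Q)/\langle\langle Q\rangle\rangle \cong P/\langle\langle C\rangle\rangle$, applied twice). Now hypothesis (3) of Theorem \ref{thm_counterexample} pins down the outer vertex groups: $W_{Star(u)} = \mathbb{Z}_2 \times (\mathbb{Z}_2 * \mathbb{Z}_2)$, with $W_{Link(u)} = \mathbb{Z}_2 * \mathbb{Z}_2$ the second (direct) factor, which is therefore already normal of index $2$ in $W_{Star(u)}$; the same holds at $v$. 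Hence $W_{\Gamma}/K \cong \mathbb{Z}_2 * \mathbb{Z}_2$, the infinite dihedral group.

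To finish, since $W_{\Gamma}/K$ is infinite, $K$ has infinite index in $W_{\Gamma}$, and as $G \subseteq K$ we conclude $[W_{\Gamma} : G] = \infty$, as claimed. (If one prefers to avoid the presentation manipulation, it is enough to write down by hand the surjection $W_{\Gamma} \twoheadrightarrow \mathbb{Z}_2 * \mathbb{Z}_2$ sending all of $V(B)$ to $1$, $u$ to one generator and $v$ to the other, and check it respects the defining relations of $W_{\Gamma}$; then $G$ lands in the kernel of a map onto an infinite group.)

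I do not expect a genuine obstacle here. The only point that needs a moment's care is the quotient computation — verifying that collapsing $W_{B}$ really reduces the chain to the free product of the two index-$2$ quotients — together with the (harmless) remark that the $G_i$ may lie in \emph{different} conjugates of $W_{B}$, which is exactly why one argues with the normal closure $\langle\langle W_{B}\rangle\rangle$ rather than with a single conjugate $gW_{B}g^{-1}$.
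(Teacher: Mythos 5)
Your proof is correct and is essentially the paper's argument: the paper simply writes down the explicit homomorphism $\phi: W_{\Gamma} \to \mathbb{Z}_2 * \mathbb{Z}_2$ sending $u, v$ to the two generators and all of $V(B)$ to $1$ (exactly your parenthetical alternative), observes that $G$ lies in its kernel since each $G_i$ sits in a conjugate of $W_B$, and concludes infinite index. Your normal-closure framing and the identification $W_{\Gamma}/\langle\langle W_B\rangle\rangle \cong \mathbb{Z}_2 * \mathbb{Z}_2$ via the amalgam are a valid and slightly cleaner packaging of the same idea.
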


\begin{proof}

Let $H = \mathbb{Z}_2 * \mathbb{Z}_2$ and $a, b$ the canonical generators of $H$. Define the homomorphism $\phi: W_{\Gamma} \to H$ by the map on generators: $\phi(u) = a$, $\phi(v) = b$, and $\phi(s) = 1$ for $s \neq u, v$.

Let $w \in G$. We can write $w = g_1w_1g_1^{-1} g_2w_2g_2^{-1} ... g_kw_kg_k^{-1}$ where $w_i \in W_B$ for each $1 \le i \le k$. It  follows that $\phi(w) = 1$. 

For a contradiction, suppose $G$ is finite index in $W_{\Gamma}$. It follows for some $i>0$ large enough, $G$ must contain a word $w$ of one of the following forms: $w = (uv)^i$, $w = (vu)^i$, $w = (uv)^iu$ or $w = (vu)^iv$,
However, for each of these cases, $\phi(w) \neq 1$, a contradiction. 
\end{proof}

We are now in a position to prove Theorem \ref{thm_counterexample}: 

\begin{proof}[Proof of Theorem \ref{thm_counterexample}]
Given an integer $n>1$, a graph $\Gamma$ satisfying the hypotheses of the theorem exists by the family of examples given in Figure \ref{fig_counterexample}. In fact, one can construct many such families. Fix such a graph $\Gamma$.

It is immediate $\Gamma$ has hypergraph index $n$, as $B$ has hypergraph index $n-1$ and $\Gamma$ consists of the addition of two strip subgraphs to $B$. By Theorem \ref{thm_thick_upper_bound}, $W_{\Gamma}$ is thick of order at most $n$. By Lemma \ref{lemma_hyp_isom_divergence}, $W_{\Gamma}$ has divergence a polynomial of degree $n+1$. By the lower bound on thickness provided by the divergence function, $W_{\Gamma}$ is thick of order exactly $n$. 

By Lemma \ref{lemma_thick_n_conj} and Lemma \ref{lemma_infinite_index}, $W_{\Gamma}$ cannot be algebraically thick of order $n$ since no finite set of thick of order at most $n-1$ subgroups generate a finite index subgroup of $W_{\Gamma}$. By Theorem \ref{thm_alg_thick_upper_bound}, $W_{\Gamma}$ is algebraically thick of order $d$ where $n < d \le 2n -1$.
\end{proof}

\bibliographystyle{amsalpha}
\bibliography{mybibliography}

\providecommand{\bysame}{\leavevmode\hbox to3em{\hrulefill}\thinspace}
\providecommand{\MR}{\relax\ifhmode\unskip\space\fi MR }
\providecommand{\MRhref}[2]{%
  \href{http://www.ams.org/mathscinet-getitem?mr=#1}{#2}
}
\providecommand{\href}[2]{#2}
\begin{thebibliography}{BFRHS}

\bibitem[BB05]{BB}
Anders Bjorner and Francesco Brenti, \emph{Combinatorics of {C}oxeter groups},
  Graduate Texts in Mathematics, vol.~32, Springer, 2005.

\bibitem[BC12]{BC}
Jason Behrstock and Ruth Charney, \emph{Divergence and quasimorphisms of
  right-angled {A}rtin groups}, Mathematische Annalen \textbf{352} (2012),
  339--356.

\bibitem[BD14]{BD}
Jason Behrstock and Cornelia Dru{\c{t}}u, \emph{Divergence, thick groups, and
  short conjugators}, Illinois J. Math. \textbf{58} (2014), no.~4, 939--980.
  \MR{3421592}

\bibitem[BDM09]{BDM}
Jason Behrstock, Cornelia Dru{\c{t}}u, and Lee Mosher, \emph{Thick metric
  spaces, relative hyperbolicity, and quasi-isometric rigidity}, Mathematische
  Annalen \textbf{344} (2009), 543--595.

\bibitem[BFRHS]{BFHS}
Jason Behrstock, Victor Falgas-Ravry, Mark~F. Hagen, and Timothy Susse,
  \emph{Global structural properties of random graphs}, arXiv:1505.01913, to
  appear in International Mathematics Research Notices.

\bibitem[BH16]{BH}
Jason Behrstock and Mark~F. Hagen, \emph{Cubulated groups: thickness, relative
  hyperbolicity, and simplicial boundaries}, Groups, Geometry, and Dynamics
  \textbf{10} (2016), no.~2, 649--707.

\bibitem[BHS]{BHS_qflats}
Jason Behrstock, Mark~F. Hagen, and Alessandro Sisto, \emph{Quasiflats in
  hierarchically hyperbolic spaces}, Preprint, arXiv:1704.04271.

\bibitem[BHS17]{BHSC}
\bysame, \emph{Thickness, relative hyperbolicity, and randomness in {C}oxeter
  groups}, Algebraic \& Geometric Topology \textbf{17} (2017), 795--740, With
  an appendix written jointly with {P}ierre-{E}mmanuel {C}aprace.

\bibitem[BKS08]{BKS}
Mladen Bestvina, Bruce Kleiner, and Michah Sageev, \emph{The asymptotic
  geometry of right-angled {A}rtin groups {I}.}, Geometry \& Topology
  \textbf{12} (2008), no.~3, 1653--1699.

\bibitem[BN08]{BN}
Jason Behrstock and Walter Neumann, \emph{Quasi-isometric classification of
  graph manifold groups}, Duke Mathematical Journal \textbf{141} (2008),
  217--240.

\bibitem[BNJ10]{BN2}
Jason Behrstock, Walter Neumann, and Tadeusz Januszkiewicz,
  \emph{Quasi-isometric classification of some high dimensional right-angled
  {A}rtin groups}, Groups, Geometry, and Dynamics \textbf{4} (2010), 681--692.

\bibitem[CM87]{CM}
Marc Culler and John~W. Morgan, \emph{Group actions on {${\bf R}$}-trees},
  Proc. London Math. Soc. (3) \textbf{55} (1987), no.~3, 571--604. \MR{907233}

\bibitem[CS11]{CS}
Pierre-Emmanuel Caprace and Michah Sageev, \emph{Rank rigidity for {CAT}(0)
  cube complexes}, Geometric and Functional Analysis \textbf{21} (2011), no.~4,
  851--891.

\bibitem[CS15]{ChSu}
Ruth Charney and Harold Sultan, \emph{Contracting boundaries of {CAT(0)}
  spaces}, Journal of Topology \textbf{8} (2015), 93, 117.

\bibitem[Dav08]{Dav}
Michael~W. Davis, \emph{The geometry and topology of {C}oxeter groups}, London
  Mathematical Society Monographs Series, vol.~32, Princeton University Press,
  Princeton, NJ, 2008. \MR{2360474 (2008k:20091)}

\bibitem[DJ00]{DJ}
Michael~W. Davis and Tadeusz Januszkiewicz, \emph{Right-angled {A}rtin groups
  are commensurable with right-angled {C}oxeter groups}, Journal of Pure and
  Applied Algebra \textbf{153} (2000), 229--235.

\bibitem[DST]{DST}
Pallavi Dani, Emily Stark, and Anne Thomas, \emph{Commensurability of certain
  right-angled {C}oxeter groups and geometric amalgams of free groups},
  Preprint, arXiv:1610.06245, To Appear in Groups, Geometry and Dynamics.

\bibitem[DT15]{DT}
Pallavi Dani and Anne Thomas, \emph{Divergence in right-angled {C}oxeter
  groups}, Trans. Amer. Math. Soc. \textbf{367} (2015), no.~5, 3549--3577.
  \MR{3314816}

\bibitem[DT17]{DT2}
\bysame, \emph{Bowditch's {JSJ} tree and the quasi-isometry classification of
  certain {C}oxeter groups}, Journal of Topology \textbf{10} (2017),
  1066--1106.

\bibitem[Hua17a]{Huang2}
Jingyin Huang, \emph{Quasi-isometric classification of right-angled {A}rtin
  groups {I}: the finite out case}, Geometry \& Topology \textbf{21} (2017),
  3467--3537.

\bibitem[Hua17b]{Huang}
\bysame, \emph{Top dimensional quasiflats in {CAT}(0) cube complexes}, Geometry
  \& Topology \textbf{21} (2017), 2281--2352.

\bibitem[Lev18]{Lev}
Ivan Levcovitz, \emph{Divergence of {CAT}(0) cube complexes and {C}oxeter
  groups}, Algebraic \& Geometric Topology \textbf{18} (2018), 1633--1673.

\bibitem[Wis11]{Wise}
Daniel~T. Wise, \emph{The structure of groups with a quasiconvex hierarchy},
  2011.

\end{thebibliography}
\end{document}